\newtheorem{theorem}{Theorem}
\newtheorem{lemma}{Lemma}
\newtheorem{proposition}{Proposition}
\newenvironment{proof}[1][Proof]{\textbf{#1.} }{\ \rule{0.5em}{0.5em}}
\newcommand{\Var}{\text{Var}}
\newcommand{\Cov}{\text{Cov}}
\newcommand{\logdet}{\text{logdet}}
\def\ds{\displaystyle}
\begin{document}

\title{Sharp minimax tests for large covariance matrices and adaptation
}

\author{Cristina Butucea$^{1,2}$, Rania Zgheib$^{1,2}$ \\
$^1$ Universit\'e Paris-Est Marne-la-Vall\'ee, \\
LAMA(UMR 8050), UPEM, UPEC, CNRS,
F-77454, Marne-la-Vall\'ee, France\\
$^2$ ENSAE-CREST-GENES \\
3, ave. P. Larousse
92245 MALAKOFF Cedex, FRANCE
}

\date{}
\maketitle

\begin{abstract}

We consider the detection problem of correlations in a $p$-dimensional Gaussian vector, when we observe $n$ independent, identically distributed random vectors, for $n$ and $p$ large.
We assume that the covariance matrix varies in some ellipsoid with parameter
$\alpha >1/2$ and total energy bounded by $L>0$.

We propose a test procedure based on a U-statistic of order 2 which is weighted in an optimal way. The weights are the solution of an optimization problem, they are constant on each diagonal and non-null only for the $T$ first diagonals, where $T=o(p)$. We show that this test statistic is asymptotically Gaussian distributed under the null hypothesis and also under the alternative hypothesis  for matrices close to the detection boundary. We prove upper bounds for the total error probability of our test procedure, for $\alpha>1/2$ and under the assumption $T=o(p)$ which implies that $n=o(p^{ 2 \alpha})$.
We illustrate via a numerical study the  behavior of our test procedure.

Moreover, we prove lower bounds for the maximal type II error and the total error probabilities. Thus we obtain the asymptotic and the sharp asymptotically  minimax separation rate $\widetilde{\varphi} = (C(\alpha, L) n^2 p )^{- \alpha/(4 \alpha + 1)}$, for  $\alpha>3/2$ and for $\alpha >1$ together with the additional assumption $p= o(n^{4 \alpha -1})$, respectively. We deduce rate asymptotic minimax results for testing the inverse of the covariance matrix.

We construct an adaptive test procedure with respect to the parameter $\alpha$ and show that it attains the rate $\widetilde{\psi}= ( n^2 p / \ln\ln(n \ds\sqrt{p}) )^{- \alpha/(4 \alpha + 1)}$.

\end{abstract}

\noindent {\bf Mathematics Subject Classifications 2000:} 62G10, 62H15, 62G20\\

\noindent {\bf Key Words:} adaptive test, covariance matrix, goodness-of-fit tests, high-dimensional data, minimax separation rate, sharp asymptotic rate, U-statistic
\\



\section{Introduction}\label{introduction}

A large variety of applied fields collect and need to recover information from high-dimensional data. Among these we can cite  communications and signal theory (functional magnetic resonance imaging, spectroscopic imaging), econometrics, climate studies, biology (gene expression micro-array) and finance (portfolio allocation).
Testing large covariance matrix is an important problem and has recently been approached via several techniques: corrected likelihood ratio test using the theory of large random matrices, methods based on the sample covariance matrix and so on.

Let $X_1,\dots,X_n$, be $n$ independent and identically distributed $p$-vectors following a  multivariate normal distribution $\mathcal{N}_p(0,\Sigma)$, where $\Sigma=[\sigma_{ij} ]_{ 1\leq i,j \leq p}$ is the normalized covariance matrix, with $\sigma_{ii} = 1$, for all $i=1$ to $p$.
Let us denote by $X_k = (X_{k,1},\dots, X_{k,p})^T$ for all $k=1,\dots,p$.
In this paper we also assume that the size $p$ of the vectors grows to infinity as well as the sample size $n$, $p \to \infty$ and $n \to \infty$.

We consider the following goodness-of-fit test, where we test the null hypothesis
\begin{equation}\label{h0}
  H_0 : \Sigma = I, ~~\text{ where } I \text{ is the } p \times p \text{ identity matrix }
\end{equation}
against the composite alternative hypothesis
\begin{equation*}
  H_1 : \Sigma \in \mathcal{F}(\alpha, L), ~~ \text {such that }
   \ds\frac{1}{2p}\| \Sigma -I \|_F^2  \geq \varphi^2.
   \end{equation*}
For any covariance matrix $\Sigma $, we recall that the Frobenius norm is computed as
$$\|\Sigma-I\|_F^2 = tr[(\Sigma - I)^2]= 2 \sum_{1 \leq i < j \leq p}\sigma_{ij}^2.$$
The class of matrices  $ \mathcal{F}(\alpha,L)$ is defined as follows, for $\alpha > 0$,
\begin{equation*}
     \mathcal{F}(\alpha,L)= \left\{\Sigma \geq 0~ ;  \ds\frac{1}{p}\ds\sum_{1 \leq i<j \leq p} \sigma_{ij}^2 |i-j|^{2 \alpha}
     \leq L \text{ for all $p$ and }   \sigma_{ii} = 1 ~\text{for all}~ i=1,\dots, p \right\}
\end{equation*}
In order to test $H_0 : \Sigma = \Sigma_0$, for some given non negative definite covariance matrix $\Sigma_0$, we suggest rescaling the data $Z_i= \Sigma_0^{- 1/2} X_i$ and then apply the same test procedure provided that
$ \Sigma_0^{- 1/2} \Sigma \Sigma_0^{- 1/2}$ belongs to $\mathcal{F}(\alpha, L)$.
Let us denote by
 \begin{equation}
  \label{classq}
     Q(\alpha, L, \varphi)= \left\{ \Sigma \in \mathcal{F}(\alpha,L) ~;~ \ds\frac{1}{p}\ds\sum_{1 \leq i<j \leq p}
     \sigma_{ij}^2 \geq   \varphi^2~ \right\} \, ,
\end{equation}
where $ \varphi = \varphi_{n,p}(\alpha, L)$ is related to $n$ and $p$, but also to $\alpha$ and $L$ assumed fixed.
The set of covariance matrices under the alternative hypothesis consists of matrices of size $p \times p$, whose elements
decrease polynomially when moving away from the diagonal. This assumption is natural for covariances matrices and has been considered for estimation problems, see e.g \citep{BickelLevina08}, \citep{CaiZhangZhou10}. Regularization techniques, originally used for nonparametric estimation of functions, were successfully employed to the estimation of large covariance matrices. Among these works, let us mention minimax and adaptive minimax results: via banding the covariance matrix \citep{BickelLevina08}, thresholding the entries of the empirical covariance matrix \citep{BickelLevina08B}, block-thresholding \cite{CaiYuan12}, tapering \citep{CaiZhangZhou10}, $\ell_1$-estimation \cite{CaiZhou12} and so on. Unlike the estimation of the covariance matrix, there are very few works for testing in a minimax setup in the existing literature.

Several types of test statistics have been proposed in the literature in order to test the null hypothesis \eqref{h0}.
The likelihood ratio (LR) statistic,  was first designed for fixed $p$ and $n \to + \infty$. To treat the high dimensional case when $n,p \to + \infty$,  \cite{bai2009} proposed a correction to the LR statistic and showed its convergence in law under the null hypothesis, as soon as $p/n \to c$, for some fixed $c\in (0,1)$. Indeed, this correction is based on the asymptotic behavior of the spectrum of the covariance matrix.
Another approach is  based on the largest magnitude of the off-diagonal entries of the sample
correlation matrix and  was introduced  by \cite{Jiang04}. Later, \citep{CaiJiang11} and  \cite{XiaoWu13} show an original limit behavior of Gumbel type for a self-normalized version of the maximum deviation of the sample covariance matrix.
We also note that a  non-asymptotic sphericity test for Gaussian vectors was studied by \cite{BerthetRigolet13}. The alternative is given by a model with rank-one and sparse additive perturbation in the variance.

Furthermore, an approach based on the quadratic form  $U_n=(1/p) tr(S_n -I)^2$, where $S_n= (1/n) \sum_{i=1}^n X_i X_i^\top$ is the sample covariance matrix, was  proposed  by \cite{Nagao1973}, to test \eqref{h0}. Later, \cite{LedoitWolf02} shows that the test of $H_0$ based  on $U_n$ is not consistent for large $p$. They introduce  a corrected version  of $U_n$ and study its asymptotic behavior when $n,p \to \infty$ and  $p/n \to c \in ( 0+ \infty)$.
In order to deal with non Gaussian random vectors, and without specifying any relation between $n$ and $p$, \cite{ChenZZ10} proposed a U-statistic of order 2, as a new correction of the previous quadratic form. They do moment assumptions in order to show the asymptotic behavior of their U-statistic, under the null and under a fixed   alternative hypothesis.  Motivated by their work,  \cite{CaiMa13} used the U-statistic given in \cite{ChenZZ10} to test \eqref{h0} from a sample of Gaussian vectors, and studied the testing problem from a minimax point of view. They consider the alternative hypothesis $H_1 : \Sigma \text{  such that } \| \Sigma -I \|_F \geq \varphi$ and they establish the minimax rates of order $\ds\sqrt{p/n}$ in this case. In our setup the restriction to the ellipsoid $\mathcal{F}(\alpha, L)$ leads to different rates for testing.

In this paper, we introduce a U-statistic, which is weighted in an optimal way for our problem. This can also be seen as a regularization technique for estimating a quadratic functional, as it is often the case in minimax nonparametric test theory (see \citep{IngsterSuslina03}). We use this test statistic to construct an asymptotically minimax test procedure. Let us stress the fact that we study the type II error probability uniformly over the set of all matrices $\Sigma$ under the alternative and that induces a separation rate saying how close $\Sigma$ can be to the identity matrix $I$ and still be distinguishable from $I$. We describe the sharp separation rates for fixed unknown $\alpha$ and give an adaptive procedure free of $\alpha$ that allows to test at the price of a logarithmic loss in the rate.

\bigskip

We describe here the rate asymptotics of the error probabilities from the minimax point of view.
We recall that a test procedure $\Delta$ is a measurable function with respect to
the observations, taking values in $[0,1]$.
Set $\eta (\Delta)= \mathbb{E}_I(\Delta)= \mathbb{P}_I( \Delta =1)$ its type I error probability,
 $\beta (\Delta, Q(\alpha, L, \varphi))   =  \sup\limits_{ \Sigma \in Q(\alpha, L, \varphi)  }
\mathbb{E}_{\Sigma}(1- \Delta) = \sup\limits_{ \Sigma \in Q(\alpha, L, \varphi)  }
\mathbb{P}_{\Sigma} ( \Delta  = 0 )$ its maximal type II error probability over the set
$Q(\alpha, L, \varphi)$, and by
\[
 \gamma ( \Delta , Q(\alpha, L, \varphi)) = \eta(\Delta) +  \beta (\Delta, Q(\alpha, L, \varphi))
\]
the total error probability of $\Delta$. Let us denote by $\gamma$ the minimax total error
probability over $ Q(\alpha, L, \varphi)$ which is defined by
\[
  \gamma = \gamma(\varphi) := \gamma(Q(\alpha, L, \varphi)) = \inf\limits_{ \Delta }
  \gamma ( \Delta , Q(\alpha, L, \varphi))
\]
where the infimum is taken over all test procedures.
We want to describe the separation rate $\widetilde{\varphi} = \widetilde{\varphi}(n,p)$
such that, on the one hand,
\[
\gamma \to 1 ~~\text{ if } \ds\frac{\varphi}{\widetilde{\varphi}} \to 0.
\]
In this case we say that we can not distinguish between the two hypotheses.
On the other hand, we exhibit an explicit test procedure $\Delta^*$ such that its total error probability tends to $0$
\[
\gamma(\Delta^*, Q(\alpha, L,\varphi)) \to 0 ~~ \text{ if }
\ds\frac{\varphi}{\widetilde{\varphi}} \to + \infty.
\]
We say that $\Delta^*$ is asymptotically minimax consistent test and $\widetilde{\varphi}$ is the asymptotically minimax separation rate.

\bigskip

In this paper, we find asymptotically minimax rates for testing over the class $\mathcal{F}(\alpha,L)$. The minimax consistent test procedure is based on a U-statistic of second order, weighted in an optimal way. In this, our procedure is very different from known corrected procedures based on quadratic forms of the sample covariance matrix, see e.g. \cite{LedoitWolf02}. This is the first time a weighted test-statistic is used for testing covariance matrices.

\bigskip

Moreover, our rates are sharp minimax. We show a Gaussian asymptotic behaviour of the test statistic in the neighbourhood of the separation rate. We get the following sharp asymptotic expression for the maximal type II probability error, under some assumptions relating $\varphi$, $n$ and $p$,
$$
\inf_{\Delta: \eta(\Delta) \leq w} \beta(\Delta, Q(\alpha, L, \varphi)) = \Phi(z_{1-w}-n\sqrt{p}b(\varphi)) + o(1),
$$
where $\Phi$ denotes the cumulative distribution function (cdf) of the standard Gaussian distribution and $z_{1-w}$ is the $1-w$ quantile of the standard Gaussian distribution for any $w \in (0,1)$. We deduce that the sharp minimax total error probability is of the type
$$
\gamma (\varphi) = 2 \Phi(-n \sqrt{p}\, b(\varphi) /2) + o(1),
$$
where $b^2(\varphi) = C(\alpha, L) \varphi ^{4+1/\alpha}$ as $\varphi \to 0$, $C(\alpha,L)$ is explicitly given.
It is usual to call the asymptotically sharp minimax rate 
$$
\widetilde{\varphi} = (C(\alpha, L) n^2 p )^{- \alpha/(4 \alpha + 1)},
$$
corresponding to $n^2p b^2(\widetilde \varphi) = 1$ and to the asymptotic testing constant $C(\alpha,L)$.

Analogous results were obtained by \cite{ButuceaZgheib15} in the particular case where the covariance matrix is Toeplitz, that is $\sigma_{i,j} = \sigma_{|i-j|}$ for all different $i$ and $j$ from 1 to $p$. We note a gain of a factor $p$ in the minimax rate. The asymptotically sharp minimax rate for Toeplitz covariance matrices is
$$
\widetilde{\varphi}_T = (C(\alpha, L) n^2 p^2 )^{- \alpha/(4 \alpha + 1)}.
$$
This additional factor $p$ can be heuristically explained by the number of parameters $p-1$ for a Toeplitz matrix, instead of $p(p-1)/2 $ for an arbitrary covariance matrix.
%
For $n=1$ the test problem for Toeplitz covariance matrices was solved in the sharp asymptotic framework, as $p \to \infty$, by \cite{Ermakov94}. Let us also recall that the adaptive rates (to $\alpha$) for minimax testing are obtained for the spectral density problem by \cite{GolubevNussbaumZhou10} by a non constructive method using the asymptotic equivalence with a Gaussian white noise model. We also give an adaptive procedure for testing without prior knowledge on $\alpha$, for $\alpha$ belonging to a closed subset of $(1, + \infty)$.

\bigskip

 Important generalizations of this problem include testing in a minimax setup of composite null hypotheses like sphericity, $H_0: \Sigma = v^2 \cdot I$, for unknown $v^2$ in some compact set separated from 0, or bandedness, $ H_0: \Sigma = \Sigma_0 $ such that $[\Sigma_0]_{ij} = 0$ for all $i \neq j$ with $|i-j| > K$.
Our proofs rely on the Gaussian distribution of Gaussian vectors. Generalizations to non Gaussian distributions with finite moments of some order can be proposed under additional assumptions on the behaviour of higher order moments, like e.g. \cite{ChenZZ10}.

\bigskip

Section~\ref{sec:test} introduces the test statistic and studies its asymptotic properties. Next we give upper bounds for the maximal type II error probability and for the total error probability and refine these results to sharp asymptotics under the condition that $n = o(1) p^{2\alpha}$.
In Section~\ref{simu} we implement our test procedure and estimate its power.
In Section~\ref{sec:lowerbounds} we prove sharp asymptotic optimality  and deduce the optimality of the minimax separation rates for all $\alpha >1$ and as soon as $p=o(n^{4 \alpha -1})$.
In Section~\ref{sec:inverse} we present the rate minimax ressults for testing the inverse of the covariance matrix.
In Section \ref{adaptivity} we define an adaptive test procedure and show that the price of adaptation is a loss of $(\ln\ln(n \sqrt{p}) )^{\alpha/(4 \alpha +1)}$ in the  separation rate.

Proofs are given in Section~\ref{sec:proofs} and in the Appendix.

\section{Test procedure and sharp asymptotics} \label{sec:test}

In the minimax theory of tests developed since \citep{IngsterI93} it is well understood that optimal test statistics are estimators (suitably normalized and tuned) of the functional which defines the separation of an element in the alternative from the element of the null hypothesis. In our case this is the Frobenius norm $\|\Sigma - I\|_F^2 = tr[(\Sigma - I)^2]$.

Weighting the elements of the sample covariance matrix appeared first as hard thresholding in minimax estimation of large covariance matrices. Let us mention \cite{BickelLevina08} for banding i.e. truncation of the matrix to its $k$ first diagonals (closest to the main diagonal), \cite{BickelLevina08B} for hard thresholding, then \cite{CaiZhangZhou10} where tapering was studied. It is a natural idea when coming from minimax nonparametric estimation.

However, that was never used for tests concerning large covariance matrices. In this section, we introduce a weighted U-statistic of order 2 for testing large covariance matrices, study its asymptotic properties and give asymptotic upper bounds for the minimax rates of testing.

\bigskip

From now on asymptotics and  symbols  $o$, $O$, $\sim$ and $\asymp$ are considered $n$ and $p$ tend to infinity.
Recall that, given sequences of real numbers $u$ and real positive numbers $v$, we say that they are
asymptotically equivalent,  $u \sim v$, if $\lim u/v= 1$. Moreover, we say that the sequences are asymptotically of the same order, $u \asymp v$, if there exist two constants $0<c \leq C < \infty$ such that $c \leq \lim\inf u/v$ and $\limsup u/v \leq C$.

\subsection{Test statistic and its asymptotic behaviour}
\label{testprocedurenontpz}
Our test statistic is a weighted U-statistic of order 2. It can be also seen as a weighted functional of the sample covariance matrix.
The weights $w^*_{ij}$ are constant on each diagonal (they depend on $i$ and $j$ only through $i-j$), non-zero only for $|i-j|\leq T$ for some large integer $T$ and decreasing polynomially for elements further from the main diagonal (as $|i-j|$ is increasing).  More precisely,
we consider the following test statistic:
\begin{equation}
  \label{est}
      \widehat{\mathcal{D}}_n = \frac{1}{n(n-1)p} \ds\sum_{1 \leq k \ne l \leq n} \ds\sum_{1 \leq i<j \leq p} w^*_{ij} X_{k,i}X_{k,j}X_{l,i}X_{l,j}
\end{equation}
where
\begin{equation}
\label{weights}
\begin{array}{lcl}
w_{ij}^* &= &\ds\frac{\lambda}{2b(\varphi)} \Big( 1 - \Big(\ds\frac{|i-j|}{T} \Big)^{2 \alpha} \Big)_+ , \quad  T = \lfloor C_T( \alpha, L) \cdot \varphi^{- \frac{1}{\alpha}} \rfloor \\ \\
 \lambda &=&  C_{\lambda}( \alpha, L ) \cdot \varphi^{\frac{2\alpha + 1}{ \alpha}} , \quad
b(\varphi)  = C^{1/2}(\alpha, L) \cdot \varphi^{2+ \frac{1}{2 \alpha}}
\end{array}
\end{equation}
with
\begin{equation}
\label{constants}
\begin{array}{lcl}
C_T(\alpha, L) &=& \left( \ds(4\alpha +1)L \right)^{\frac{1}{2\alpha}}, \quad
 C_{\lambda}(\alpha,L) \, = \,\ds\frac{2\alpha+1}{2 \alpha } \left( \ds(4\alpha +1)L \right)^{-\frac{1}{2\alpha}},
\\ \\
 C(\alpha,L) &=& \ds\frac{ 2\alpha +1 }{ (4\alpha+1)^{1+ {1}/(2\alpha)}}
\ds L ^{-\frac{1}{2\alpha}}.
\end{array}
\end{equation}

\noindent
The weights $\{ w_{ij}^* \}_{i,j}$ and the parameters $T, \lambda , b^2(\varphi)$ are obtained by solving  the following optimization problem  :
\begin{equation}
\label{optprob}
\ds\frac{1}{p}\sum_{1 \leq i < j \leq p} w^*_{ij} \sigma_{ij}^{*2}=
  \sup\limits_{ \left\{\substack{ (w_{ij})_{ij} ~~:~~ w_{ij} \geq 0 ;\\ \\
  \frac{1}{p}\sum_{1 \leq i < j \leq p} w_{ij}^2 =\frac{1}{2}} \right\} }
  \inf\limits_{ \left\{ \substack{ \Sigma ~:~ \Sigma=(\sigma_{ij})_{i,j} ;\\ \\
  \Sigma \in Q(\alpha,L,\varphi)} \right\} }
  \ds\frac{1}{p}\sum_{1 \leq i < j \leq p} w_{ij} \sigma_{ij}^2
\end{equation}
Indeed our test statistic $ \widehat{\mathcal{D}}_n$ will concentrate asymptotically around  the value   $\mathbb{E}_\Sigma(\widehat{\mathcal{D}}_n) = (1/p) \sum_{1 \leq i <j \leq p} w_{ij} \sigma_{ij}^2$ which is $0$ for $\Sigma= I$. The minimax paradigm considers the worst parameter $\Sigma^*$ in the class $Q(\alpha, L, \varphi)$, that will give the smallest value $\mathbb{E}_{\Sigma^*}( \widehat{\mathcal{D}}_n(w_{ij})) $ and then finds the parameters $\{ w^*_{ij} \}_{i<j}$  of the test statistic to provide the largest value $\mathbb{E}_{\Sigma^*} (\widehat{\mathcal{D}}_n(w_{ij}^*))$. Such procedure performs uniformly well over all parameters $\Sigma \in Q(\alpha, L, \varphi)$. That explains why we solve the optimization problem \eqref{optprob}.

\noindent
Note that the weights in (\ref{weights}) have further properties:
\begin{eqnarray*}
 && w^*_{ij} \geq 0~, \quad  \ds\frac{1}{p}  \ds\sum_{1 \leq i < j \leq p}  w_{ij}^{*2} =\frac{1}{2}~, ~~
 \sup\limits_{i,j} w_{ij}^*  \asymp \ds\frac{1}{\ds\sqrt{T}}, \mbox{ as } \varphi \to 0 \mbox{ and } p\, \varphi^{1 /\alpha}\to \infty.
\end{eqnarray*}

\vspace{0.5cm}

The following Proposition gives the moments of  $\widehat{\mathcal{D}}_n $ under the null and their bounds under the alternative hypothesis, respectively, as well as the asymptotic normality under the null hypothesis.
\begin{proposition}
\label{prop:espvar}
The test statistic $\widehat{\mathcal{D}}_n $ defined by (\ref{est}) with parameters given by (\ref{weights}) and (\ref{constants}) has the following moments, under the null hypothesis:
\begin{eqnarray*}
  \label{var0}
   \mathbb{E}_{I}(\widehat{\mathcal{D}}_n) = 0,& \quad &  \Var_{I}(\widehat{\mathcal{D}}_n)=\displaystyle\frac{2}{n(n-1)p^2} \ds\sum_{ 1 \leq i< j \leq p}w_{ij}^{*2} = \frac{1}{n(n-1)p}\\
\end{eqnarray*}
Also we have that
\[
  n\sqrt{p} \,\, \widehat{\mathcal{D}}_n  \overset{d}{\to} \mathcal{N}(0,1).
\]
Moreover, under the alternative, if we assume that $\varphi \to 0,\, p \, \varphi^{1/\alpha} \to \infty$  and $\alpha >1/2$, we have, for all $\Sigma$ in  $Q(\alpha, L, \varphi)$:
\begin{equation*}
  \label{EH1}
       \mathbb{E}_{\Sigma}(\widehat{\mathcal{D}}_n) = \ds\frac{1}{p} \sum_{1 \leq i < j \leq p} w_{ij}^* \sigma_{ij}^2 \geq b(\varphi) \quad \text{ and }  \quad
\Var_{\Sigma} (\widehat{\mathcal{D}}_n) = \ds\frac{T_1}{n(n-1)p^2}+ \frac{T_2}{np^2},
\end{equation*}
where
\begin{eqnarray}
\label{T_1}
T_1 & \leq & p \cdot ( 1 + o(1)) +    p  \cdot \mathbb{E}_{\Sigma}(\widehat{\mathcal{D}}_n)  \cdot O(T\ds\sqrt{T}) , \label{T1}\\
\label{T_2}
T_2 & \leq &  p \cdot  \mathbb{E}_{\Sigma}(\widehat{\mathcal{D}}_n) \cdot O(\sqrt{T})  + p^{3/2} \cdot  \Big( \mathbb{E}^{3/2}_{\Sigma}(\widehat{\mathcal{D}}_n) \cdot  O( T^{3/4}) + \mathbb{E}_{\Sigma}(\widehat{\mathcal{D}}_n) \cdot o(1) \Big) .\label{T2}
\end{eqnarray}
\end{proposition}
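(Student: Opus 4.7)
The overall plan is to treat $\widehat{\mathcal{D}}_n$ as a symmetric U-statistic of order 2 with kernel
$$h(X_k, X_l) = \sum_{1\leq i < j \leq p} w_{ij}^* X_{k,i}X_{k,j}X_{l,i}X_{l,j},$$
and to evaluate all moments by repeated use of Isserlis' formula for Gaussian expectations. Under $\Sigma = I$ the kernel is degenerate, $\mathbb{E}_I[h(X_k, X_l) \mid X_k] = 0$, so $\mathbb{E}_I(\widehat{\mathcal{D}}_n) = 0$ is immediate. Squaring and expanding, degeneracy kills every cross-term in $(k,l) \neq (k',l')$ that shares exactly one sample index; the only surviving pairings are $(k',l') = (k,l)$ and $(k',l') = (l,k)$, for each of which Isserlis forces $(i',j') = (i,j)$ and gives contribution $w_{ij}^{*2}$. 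Summing over the $2n(n-1)$ such configurations, together with the normalization $\sum_{i<j} w_{ij}^{*2} = p/2$ from (\ref{weights}), delivers $\Var_I(\widehat{\mathcal{D}}_n) = 1/(n(n-1)p)$.

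Under a general $\Sigma$, independence of $X_k$ and $X_l$ for $k \neq l$ gives directly $\mathbb{E}_\Sigma(\widehat{\mathcal{D}}_n) = \frac{1}{p}\sum_{i<j} w_{ij}^* \sigma_{ij}^2$. The lower bound by $b(\varphi)$ then reduces to evaluating the saddle value of the optimization problem (\ref{optprob}). Writing its Lagrangian in the variables $u_{ij} = \sigma_{ij}^2 \geq 0$ under the ellipsoid constraint $\frac{1}{p}\sum u_{ij}|i-j|^{2\alpha}\leq L$ and the separation constraint $\frac{1}{p}\sum u_{ij} \geq \varphi^2$ yields the tent profile $w_{ij}^* = \frac{\lambda}{2b(\varphi)}\bigl(1 - (|i-j|/T)^{2\alpha}\bigr)_+$; the cutoff $T$, multiplier $\lambda$ and constant $C(\alpha,L)$ of (\ref{constants}) are then pinned down by requiring simultaneous tightness of both constraints together with the normalization $\sum w_{ij}^{*2} = p/2$. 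Substituting back produces exactly $b(\varphi) = C^{1/2}(\alpha,L)\varphi^{2 + 1/(2\alpha)}$.

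For $\Var_\Sigma(\widehat{\mathcal{D}}_n)$ I would use a Hoeffding decomposition of the centred U-statistic into an orthogonal sum of a linear (non-degenerate) part and a fully degenerate second-order part, with variance orders $1/(np^2)$ and $1/(n(n-1)p^2)$ respectively; this accounts for the split between $T_1$ and $T_2$. Expanding the eighth-order Gaussian moments by Isserlis produces sums over index pairings that I would regroup into three groups: a dominant fully paired configuration yielding the $p(1+o(1))$ term in (\ref{T1}); mixed configurations controlled by Cauchy--Schwarz against the ellipsoid bound $\sum \sigma_{ij}^2|i-j|^{2\alpha} \leq pL$ and the uniform weight bound $\sup w_{ij}^* \asymp T^{-1/2}$, giving the $O(T\sqrt{T})$ prefactor in (\ref{T1}); and triple-covariance terms $\sigma_{ij}\sigma_{ij'}\sigma_{jj'}$ that rewrite, via Hölder, in terms of $\mathbb{E}_\Sigma(\widehat{\mathcal{D}}_n)$ and $\mathbb{E}_\Sigma^{3/2}(\widehat{\mathcal{D}}_n)$, with the orders $O(\sqrt{T})$ and $O(T^{3/4})$ in (\ref{T2}) emerging from the sparsity of the weight support $\{|i-j|\leq T\}$.

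The asymptotic normality $n\sqrt{p}\,\widehat{\mathcal{D}}_n \overset{d}{\to} \mathcal{N}(0,1)$ is the main obstacle, because the statistic is a degenerate U-statistic whose kernel lives in a growing-dimensional space, so the usual Gaussian-chaos limit has to collapse to a Gaussian thanks to the averaging over $p$. I would apply the martingale central limit theorem of Hall and Heyde. Setting $\mathcal{F}_k = \sigma(X_1,\ldots,X_k)$ and writing $n\sqrt{p}\,\widehat{\mathcal{D}}_n = \sum_{k=2}^n Z_k$ with $Z_k = \frac{2}{(n-1)\sqrt{p}}\sum_{l<k} h(X_k, X_l)$, degeneracy gives $\mathbb{E}[Z_k \mid \mathcal{F}_{k-1}] = 0$, so the partial sums form a martingale. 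It then remains to check (i) the conditional variance condition $\sum_k \mathbb{E}[Z_k^2 \mid \mathcal{F}_{k-1}] \to 1$ in probability, which reduces to a law-of-large-numbers statement whose variance is controlled by a further Isserlis computation, and (ii) Lyapunov's condition $\sum_k \mathbb{E}[Z_k^4] \to 0$. The latter is where the regime $T = o(p)$ and the bound $\sup w_{ij}^* \asymp T^{-1/2}$ will be essential, producing the extra $1/T$ gain needed to beat the target normalization.
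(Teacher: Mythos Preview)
Your proposal is correct and follows essentially the same route as the paper: the Hoeffding decomposition you describe is exactly the paper's display~(\ref{decomp}), and your Isserlis/Cauchy--Schwarz bookkeeping on index-coincidence patterns matches the paper's partition of the $(i,j,i',j')$ sums into the sets $A_1,A_2,A_3$ term by term, with the $T_{2,3}$ bound producing the $\mathbb{E}_\Sigma^{3/2}$ factor just as you anticipate. For the null CLT the paper's own proof of Proposition~\ref{prop:espvar} simply declares it ``obvious'' and the real work appears elsewhere---via Hall's degenerate U-statistic theorem in Proposition~\ref{AN} and a martingale Berry--Esseen argument in Lemma~\ref{lem:berry-essen}---which is the same machinery you propose; one small correction is that in that computation the Lyapunov sum is $O(1/n)$ regardless of $T$, and it is the conditional-variance condition, not Lyapunov, that consumes the $pT\to\infty$ hypothesis.
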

Note that, under the alternative, we have the additional assumption that $p \varphi^{1/\alpha} \asymp p/T  \to + \infty$, when $p$ grows to infinity. This is natural in order to a have a meaningful weighted statistic. 

Let us look closer at the optimization problem (\ref{optprob}): for given $\varphi >0$, $b(\varphi)$ is the least value that $\mathbb{E}_\Sigma(\widehat{\mathcal{D}}_n)$ can take over $\Sigma$ in the alternative set of hypotheses.

Under the alternative, we shall establish the asymptotic normality under additional conditions that the underlying covariance matrix is  close to the border of $\mathcal{F}(\alpha,L)$.  This will be sufficient to give upper bounds of the total error probability of Gaussian type in next Section.
\begin{proposition}\label{AN}
The test statistic $\widehat{\mathcal{D}}_n $ defined by (\ref{est}) with parameters given by (\ref{weights}) and (\ref{constants}), such that $\varphi \to 0$, $p\varphi^{1/\alpha} \to \infty$ and under the aditionnal assumption that $n^2pb^2(\varphi) \asymp 1$, is asymptotically normal:
$$
n\sqrt{p} (\widehat{\mathcal{D}}_n-\mathbb{E}_{\Sigma} (\widehat{\mathcal{D}}_n))  \overset{d}{\to} \mathcal{N}(0,1),
$$
for any $\Sigma$ in $Q(\alpha, L, \varphi)$ such that $\mathbb{E}_{\Sigma}(\widehat{\mathcal{D}}_n) = O(b(\varphi))$.
\end{proposition}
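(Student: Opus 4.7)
The plan is to apply a Hoeffding decomposition to split $\widehat{\mathcal{D}}_n - \theta$ (with $\theta:=\mathbb{E}_\Sigma(\widehat{\mathcal{D}}_n)$) into a linear Hájek part $B_n$ and a completely degenerate U-statistic $A_n$, show that $B_n$ is negligible in the sharp regime, and apply a martingale central limit theorem to $A_n$. Introducing the centered variables $Y_{ij}(x) := x_i x_j - \sigma_{ij}$ (so $\mathbb{E}_\Sigma[Y_{ij}(X)]=0$) and expanding $X_{k,i}X_{k,j}X_{l,i}X_{l,j} = (Y_{ij}(X_k)+\sigma_{ij})(Y_{ij}(X_l)+\sigma_{ij})$ gives
\begin{equation*}
\widehat{\mathcal{D}}_n - \theta = A_n + B_n,
\end{equation*}
with
\begin{equation*}
A_n = \frac{2}{n(n-1)p}\sum_{k<l}\sum_{i<j}w^*_{ij}Y_{ij}(X_k)Y_{ij}(X_l), \quad B_n = \frac{2}{np}\sum_{k=1}^n\sum_{i<j}w^*_{ij}\sigma_{ij}Y_{ij}(X_k),
\end{equation*}
and the kernel of $A_n$ is degenerate because $\mathbb{E}_\Sigma[Y_{ij}(X)]=0$.

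The first step is to dispose of $B_n$. Since its summands are i.i.d.\ with zero mean, $\Var_\Sigma(B_n)=(4/n)\Var_\Sigma(g_1(X_1))$ with $g_1(x)=(1/p)\sum_{i<j}w^*_{ij}\sigma_{ij}Y_{ij}(x)$, and this quantity is exactly the term $T_2/(np^2)$ appearing in the Hoeffding variance decomposition in Proposition~\ref{prop:espvar}. Plugging $\mathbb{E}_\Sigma(\widehat{\mathcal{D}}_n)=O(b(\varphi))$ and $n^2pb^2(\varphi)\asymp 1$ (so $b(\varphi)\asymp 1/(n\sqrt p)$ and $T\asymp (n\sqrt p)^{2/(4\alpha+1)}$) into the bound \eqref{T_2} and checking its three contributions term by term yields $T_2/(np^2)=o(1/(n^2p))$ as long as $\alpha>1/2$ (the required $n=o(p^{2\alpha})$ is equivalent to $T=o(p)$, which follows from $p\varphi^{1/\alpha}\to\infty$). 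By Chebyshev's inequality, $n\sqrt p\, B_n \to 0$ in probability, and it remains to prove $n\sqrt p\, A_n \overset{d}{\to} \mathcal N(0,1)$.

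For this, set $\mathcal F_l := \sigma(X_1,\dots,X_l)$ and
\begin{equation*}
D_l := \frac{2}{n(n-1)p}\sum_{k<l}\sum_{i<j}w^*_{ij}Y_{ij}(X_k)Y_{ij}(X_l), \quad l=2,\dots,n.
\end{equation*}
The identity $\mathbb{E}_\Sigma[Y_{ij}(X_l)\mid \mathcal F_{l-1}]=0$ yields $\mathbb{E}_\Sigma[D_l\mid\mathcal F_{l-1}]=0$, so $\{D_l\}$ is a martingale difference sequence with $A_n=\sum_{l=2}^n D_l$, and I would then invoke the martingale CLT of Hall and Heyde. Using Isserlis' identity $\mathbb{E}[Y_{ij}(X)Y_{i'j'}(X)]=\sigma_{ii'}\sigma_{jj'}+\sigma_{ij'}\sigma_{i'j}$, the conditional variance $V_n:=\sum_l\mathbb{E}[D_l^2\mid\mathcal F_{l-1}]$ becomes a quadratic form in the $Y_{ij}(X_k)$, $k\le n-1$, whose expectation equals $\Var_\Sigma(A_n)=T_1/(n(n-1)p^2)=(1+o(1))/(n^2p)$ by \eqref{T_1} in the same regime. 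The Lyapunov condition $\sum_l\mathbb E_\Sigma[D_l^4]=o(1/(n^4p^2))$ is obtained from an analogous but higher-order expansion on eight-tuples of indices, in which the dominant combinatorial pattern is the one already treated under $H_0$ in Proposition~\ref{prop:espvar} and the alternative perturbations are absorbed by the same argument that handled $B_n$.

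The main technical obstacle will be the concentration $V_n\overset{P}{\to}1/(n^2p)$, i.e.\ the estimate $\Var_\Sigma(V_n)=o(1/(n^4p^2))$. It reduces to a careful multi-index count on eighth-order Gaussian moments of the $X_{k,i}$'s, in which the weight bound $\sup_{i,j}w^*_{ij}\asymp T^{-1/2}$, the support size $T=\lfloor C_T\varphi^{-1/\alpha}\rfloor=o(p)$, and the ellipsoid constraint $(1/p)\sum_{i<j}\sigma_{ij}^2|i-j|^{2\alpha}\le L$ together force all off-diagonal ($k\ne k'$) contributions to vanish. The hypotheses $\alpha>1/2$, $n^2pb^2(\varphi)\asymp 1$, and $p\varphi^{1/\alpha}\to\infty$ enter precisely at this combinatorial step to ensure that every cross term is of smaller order than $1/(n^2p)^2$.
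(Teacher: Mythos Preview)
Your proposal is essentially the paper's proof. The Hoeffding decomposition $\widehat{\mathcal D}_n-\theta=A_n+B_n$ you write is exactly decomposition~(\ref{decomp}) in the proof of Proposition~\ref{prop:espvar}, and you dispose of $B_n$ via the $T_2$ bound in precisely the same way the paper does (the paper's display~(\ref{convenproba}) is your $n\sqrt p\,B_n\to 0$ argument, line for line).

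The only cosmetic difference is how you handle $A_n$. You set up martingale differences $D_l$ and invoke the Hall--Heyde martingale CLT, requiring $V_n\to 1/(n^2p)$ in probability and a Lyapunov bound $\sum_l\mathbb E[D_l^4]=o(1/(n^4p^2))$. The paper instead cites Theorem~1 of Hall (1984) for degenerate U-statistics, which asks for
\[
\frac{\mathbb E_\Sigma(G_n^2(X_1,X_2))+n^{-1}\mathbb E_\Sigma(H_n^4(X_1,X_2))}{\mathbb E_\Sigma^2(H_n^2(X_1,X_2))}\to 0,
\qquad G_n(x,y)=\mathbb E_\Sigma\big(H_n(X_1,x)H_n(X_1,y)\big).
\]
These are the same conditions in disguise: Hall's theorem \emph{is} the martingale CLT applied to a degenerate $U$-statistic, the $G_n^2$ condition is exactly what controls $\Var_\Sigma(V_n)$, and the $H_n^4/n$ condition is your Lyapunov bound. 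So nothing is gained or lost by the change of packaging; the multi-index combinatorics you describe as ``the main technical obstacle'' is precisely the calculation the paper carries out in detail (the terms $\mathcal G_1,\dots,\mathcal G_4$ for the $G_n^2$ part and $\mathcal H_1,\mathcal H_2$ for the $H_n^4$ part), using the same three ingredients you list: $\sup_{i,j}w_{ij}^*\asymp T^{-1/2}$, $T=o(p)$, and the ellipsoid constraint. Your sketch is correct but stops short of that combinatorial bookkeeping; if you complete it you will reproduce the paper's computation.
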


\subsection{Upper bounds for the error probabilities}

 In order to distinguish between the two hypothesis $H_0$ and $H_1$ defined previously,
we propose the following test procedure
\begin{equation}
\label{test*}
  \Delta^* = \Delta^*(t) = \mathds{1} ( \widehat{\mathcal{D}}_n > t) ,  \quad t>0
 \end{equation}
where $\widehat{\mathcal{D}}_n$ is the estimator defined in (\ref{est}).

The following theorem proves that the previously defined test procedure is minimax consistent if $t$ is conveniently chosen.
\begin{theorem}
 \label{theo:bornesupMinimax}
The test procedure $\Delta^*$ defined in (\ref{test*}) with $t>0$ has the following properties :

Type I error probability : if $n\ds\sqrt{p} \cdot t \to + \infty $ then $  \eta(\Delta^*) \to 0$.

Type II error probability : if  $\alpha >1/2$ and if
\begin{equation*}
\label{conditionbornesupmM}
\varphi \to 0 , p \varphi^{1/\alpha} \to \infty \mbox{ and }
 \, 
 n^2 p b^2(\varphi) \to + \infty
\end{equation*}
then, uniformly over  $t$ such that $t \leq c  \cdot C^{1/2}( \alpha ,L) \cdot \varphi^{2+ \frac{1 }{2\alpha}}$ , for some constant $c $ in $(0, 1)$, we have
$$
\beta( \Delta^*(t), Q( \alpha, L, \varphi)) \to 0.
$$
If $t$ verifies all previous assumptions, then $\Delta^*(t)$ is asymptotically minimax consistent:
$$
\gamma( \Delta^*(t), Q( \alpha, L, \varphi)) \to 0.
$$
\end{theorem}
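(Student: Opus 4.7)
The plan is to treat the two error probabilities separately by Chebyshev's inequality, using the moment information supplied by Proposition \ref{prop:espvar}, and then combine them.

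For the type I error, I would note that under $H_0$ we have $\mathbb{E}_I(\widehat{\mathcal{D}}_n)=0$ and $\Var_I(\widehat{\mathcal{D}}_n)=1/(n(n-1)p)$ by Proposition \ref{prop:espvar}. Markov's inequality then gives
$$
\eta(\Delta^*)=\mathbb{P}_I(\widehat{\mathcal{D}}_n>t)\le \frac{\Var_I(\widehat{\mathcal{D}}_n)}{t^2}=\frac{1}{n(n-1)p\,t^2},
$$
which tends to $0$ as soon as $n\sqrt{p}\,t\to\infty$.

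For the type II error, fix an arbitrary $\Sigma\in Q(\alpha,L,\varphi)$ and write $M:=\mathbb{E}_\Sigma(\widehat{\mathcal{D}}_n)$. Proposition \ref{prop:espvar} gives $M\ge b(\varphi)$, while the hypothesis on $t$ gives $t\le c\,b(\varphi)\le c\,M$ with $c\in(0,1)$. Hence
$$
\{\widehat{\mathcal{D}}_n\le t\}\subset\{|\widehat{\mathcal{D}}_n-M|\ge (1-c)M\},
$$
and Chebyshev yields $\beta(\Delta^*(t),Q(\alpha,L,\varphi))\le \Var_\Sigma(\widehat{\mathcal{D}}_n)/((1-c)^2 M^2)$, uniformly in $\Sigma$ and in the admissible $t$. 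It remains to show that $\Var_\Sigma(\widehat{\mathcal{D}}_n)/M^2\to 0$ uniformly over $\Sigma\in Q(\alpha,L,\varphi)$.

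Using the bounds on $T_1$ and $T_2$ from Proposition \ref{prop:espvar}, I would split
$$
\frac{\Var_\Sigma(\widehat{\mathcal{D}}_n)}{M^2}\le\frac{T_1}{n(n-1)p^2 M^2}+\frac{T_2}{np^2 M^2}
$$
and bound each elementary contribution by a power of $\varphi$ (times $n$, $p$). Substituting $T\asymp \varphi^{-1/\alpha}$ and $b(\varphi)\asymp \varphi^{2+1/(2\alpha)}$, the first term produces, after using $M\ge b(\varphi)$, summands of the form $1/(n^2 p b^2(\varphi))$ and $T^{3/2}/(n^2 p b(\varphi))\asymp 1/(n^2 p \varphi^{2+2/\alpha})$. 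The exponent inequality $4+1/\alpha>2+2/\alpha$ (valid precisely when $\alpha>1/2$) shows that $n^2p\,b^2(\varphi)\to\infty$ implies $n^2p\,\varphi^{2+2/\alpha}\to\infty$, killing both summands. The $T_2$ contribution is handled similarly: each of the three terms $\sqrt{T}/(npM)$, $T^{3/4}/((np)^{1/2}M^{1/2})$ and $1/((np)^{1/2}M)$ is bounded above by a negative power of $n^2 p\,b^2(\varphi)$ (possibly multiplied by powers of $\varphi$ that stay bounded under $\alpha>1/2$), hence tends to $0$. Observe that larger $M$ only helps, because $M$ enters all denominators with a positive exponent and enters the $T_i$ numerators with exponents strictly smaller than the corresponding denominator exponents; so the worst case is $M\asymp b(\varphi)$ and the bound is uniform on $Q(\alpha,L,\varphi)$.

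The last assertion follows by putting the two pieces together: choosing any $t$ with $n\sqrt{p}\,t\to\infty$ and $t\le c\,b(\varphi)$, which is possible since $n^2 p\,b^2(\varphi)\to\infty$, makes both $\eta(\Delta^*)$ and $\beta(\Delta^*(t),Q(\alpha,L,\varphi))$ vanish, so $\gamma(\Delta^*(t),Q(\alpha,L,\varphi))\to 0$. The main technical obstacle is the uniform variance-to-mean-squared control in step three: one must verify that every elementary monomial coming out of the bounds on $T_1,T_2$ can be dominated by a negative power of $n^2 p\,b^2(\varphi)$ via the exponent bookkeeping that requires $\alpha>1/2$, and that the bound remains valid for large $M$ as well as for $M\asymp b(\varphi)$.
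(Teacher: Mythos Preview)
Your proposal is correct and follows essentially the same route as the paper: Chebyshev's inequality combined with the moment bounds of Proposition~\ref{prop:espvar}, then exponent bookkeeping using $T\asymp\varphi^{-1/\alpha}$, $b(\varphi)\asymp\varphi^{2+1/(2\alpha)}$ and the condition $\alpha>1/2$. The only difference is that for the type~I error the paper invokes the asymptotic normality of $n\sqrt{p}\,\widehat{\mathcal D}_n$ (needed anyway for the sharp Theorem~\ref{theo:bornesup}), whereas your direct Chebyshev bound $\eta(\Delta^*)\le 1/(n(n-1)p\,t^2)$ is more elementary and perfectly sufficient here; note also a minor slip in your $T_2$ analysis: the middle term should be $T^{3/4}/(n\sqrt{p}\,M^{1/2})$, not $T^{3/4}/((np)^{1/2}M^{1/2})$, though the conclusion is unaffected.
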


In the next Theorem we give  sharp  upper bounds of error probabilities of Gaussian type. The proof of this result explains the choice of the weights as solution of the optimization problem \eqref{optprob}. Moreover, we will see that the Gaussian behavior is obtained near the separation rates.

Recall that $\Phi$ is the cumulative distribution function (cdf) of standard Gaussian random variable and, for any $w \in (0,1)$, $z_{1-w}$ is defined by $\Phi({z_{1-w}}) = 1-w$.
\begin{theorem}
 \label{theo:bornesup}
The test procedure $\Delta^*$ defined in (\ref{test*}) with $t>0$ has the following properties :

Type I error probability : we have  $  \eta(\Delta^*(t)) = 1 - \Phi(n\sqrt{p} \cdot t) + o(1)$.

Type II error probability : if  $\alpha >1/2$ and  if
\begin{equation}
\label{conditionbornesup}
\varphi \to 0, \, p \varphi^{1/\alpha} \to \infty \mbox{ and }
n^2 p \, b^2(\varphi) \asymp 1 \, ,
\end{equation}
then, uniformly over $t$, 
we have
$$
\beta( \Delta^*(t), Q( \alpha, L, \varphi)) \leq \Phi(n\sqrt{p} \cdot ( t - b(\varphi)))+o(1).
$$
\end{theorem}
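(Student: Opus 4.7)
The plan is to combine the two central-limit theorems already at hand (Proposition \ref{prop:espvar} under $H_0$ and Proposition \ref{AN} under the alternative near the detection boundary) with the uniform lower bound $\mathbb{E}_\Sigma(\widehat{\mathcal{D}}_n) \geq b(\varphi)$ valid on $Q(\alpha,L,\varphi)$. Uniformity in $t$ will be obtained through Polya's theorem, which upgrades convergence in distribution towards the continuous limit $\Phi$ into uniform convergence of distribution functions.

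For the type I error, Proposition \ref{prop:espvar} provides $n\sqrt{p}\,\widehat{\mathcal{D}}_n \overset{d}{\to} \mathcal{N}(0,1)$ under $H_0$. Polya's theorem then immediately yields
\[
\eta(\Delta^*(t)) = \mathbb{P}_I\big(n\sqrt{p}\,\widehat{\mathcal{D}}_n > n\sqrt{p}\,t\big) = 1 - \Phi(n\sqrt{p}\,t) + o(1),
\]
with the remainder uniform in $t \in \mathbb{R}$.

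For the type II error, fix $\Sigma \in Q(\alpha,L,\varphi)$ and write $m = \mathbb{E}_\Sigma(\widehat{\mathcal{D}}_n) \geq b(\varphi)$. Starting from $\mathbb{P}_\Sigma(\widehat{\mathcal{D}}_n \leq t) = \mathbb{P}_\Sigma\!\big(n\sqrt{p}(\widehat{\mathcal{D}}_n - m) \leq n\sqrt{p}(t - m)\big)$, I would split according to the size of $m$ relative to $b(\varphi)$. If $m \leq K\,b(\varphi)$ for a fixed constant $K$, Proposition \ref{AN} combined with Polya's theorem gives $\mathbb{P}_\Sigma(\widehat{\mathcal{D}}_n \leq t) = \Phi(n\sqrt{p}(t-m)) + o(1)$ uniformly in $t$, and the monotonicity of $\Phi$ together with $m \geq b(\varphi)$ bounds this by $\Phi(n\sqrt{p}(t-b(\varphi))) + o(1)$. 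If instead $m/b(\varphi) \to \infty$ along a subsequence, apply Chebyshev: for $t < m$,
\[
\mathbb{P}_\Sigma(\widehat{\mathcal{D}}_n \leq t) \leq \frac{\Var_\Sigma(\widehat{\mathcal{D}}_n)}{(m-t)^2}.
\]
Using the variance expansion of Proposition \ref{prop:espvar} with the calibrations $T \asymp \varphi^{-1/\alpha}$, $n^2 p\,b^2(\varphi) \asymp 1$, $p\varphi^{1/\alpha}\to\infty$ and $\alpha>1/2$, one verifies term by term that $\Var_\Sigma(\widehat{\mathcal{D}}_n)/m^2 = o(1)$ uniformly over such $\Sigma$; hence $\mathbb{P}_\Sigma(\widehat{\mathcal{D}}_n \leq t) = o(1)$ for $t \leq m/2$. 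For $t > m/2$ one has $n\sqrt{p}(t-b(\varphi)) \geq n\sqrt{p}(m/2 - b(\varphi)) \to \infty$, so $\Phi(n\sqrt{p}(t-b(\varphi))) \to 1$ and the claimed inequality becomes trivial. Taking the supremum in $\Sigma$ then yields the desired bound on $\beta(\Delta^*(t),Q(\alpha,L,\varphi))$, uniformly in $t$.

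The main technical obstacle will be the variance control in the large-mean regime: expanding $\Var_\Sigma(\widehat{\mathcal{D}}_n)/m^2$ through $T_1/(n(n-1)p^2) + T_2/(np^2)$ produces a handful of terms, each a product of powers of $T$, of $n\sqrt{p}$ and of the ratio $m/b(\varphi)$, all of which must be shown to vanish under the joint calibrations $T \asymp \varphi^{-1/\alpha}$, $n\sqrt{p}\,b(\varphi) \asymp 1$ and $p/T \to \infty$. This is exactly where the assumption $\alpha > 1/2$ enters, through the decay $\varphi^{2 - 1/\alpha} \to 0$ that controls the dominant cross terms.
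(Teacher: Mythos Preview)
Your proposal is correct and follows essentially the same approach as the paper: asymptotic normality under $H_0$ for the type~I error, and under the alternative a case split according to whether $m=\mathbb{E}_\Sigma(\widehat{\mathcal{D}}_n)$ stays of order $b(\varphi)$ (then Proposition~\ref{AN} applies) or $m/b(\varphi)\to\infty$ (then Chebyshev with the variance bounds of Proposition~\ref{prop:espvar} gives $o(1)$). Your explicit appeal to Polya's theorem and the split at $t=m/2$ make the uniformity in $t$ slightly more transparent than the paper's write-up, but the content is the same.
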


 In particular, for $t=t^w$ such that $n\ds\sqrt{p} \cdot t^w = z_{1-w} $ we have  $ \eta(\Delta^*(t^w)) \leq w+o(1)$
and
$$
\beta( \Delta^*(t^w), Q( \alpha, L, \varphi)) \leq \Phi(z_{1-w } - n\sqrt{p} \cdot b(\varphi))+o(1).
$$
Another important consequence of the previous theorem, is that the test procedure $\Delta^*(t^*)$, with $t^* = b(\varphi)/2$ has total error probability
$$
\gamma ( \Delta^*(t^*) , Q(\alpha,L, \varphi))  \leq 2 \, \Phi\left(- n \sqrt{p} \,\frac{b(\varphi)}2 \right)+o(1).
$$

\subsection{Simulation study}
\label{simu}

We include two examples, to illustrate the numerical behavior of our test procedure. First, we test the null hypothesis $ \Sigma =I $ against the alternative hypothesis defined by the symmetric positive  matrices  $
\Sigma(M)= \Big( \mathds{1}_{ \{ i=j \}} +  \mathds{1}_{ \{ i \ne j \}} \cdot (|i-j|^{- \frac 32} \cdot |i+j|^{ \frac 1{100}} )/ M \Big)_{1 \leq i,j \leq p}$ .  We implement the test statistic $\widehat{\mathcal{D}}_n$ defined in \eqref{est} and \eqref{weights} for $\alpha =L =1$, and $\varphi = \psi(M)=(1/p)\Big(  \sum_{i<j} |i-j|^{- 3} \cdot |i+j|^{ \frac 2{100}} \Big)^{1/2}/ M $. We choose  the threshold $t$ of the test empirically, under the null hypothesis $\Sigma =Id$, from 1000 repetead samples of size $n$, such that the type I error probability is fixed at 0.05. We use $t$ to estimate the type II error probability, also from 1000 repetitions
and then plot the power as function of $\psi(M)$.
\begin{center}
 \begin{figure}[hb!]
\includegraphics[width= 10cm, height=7cm]{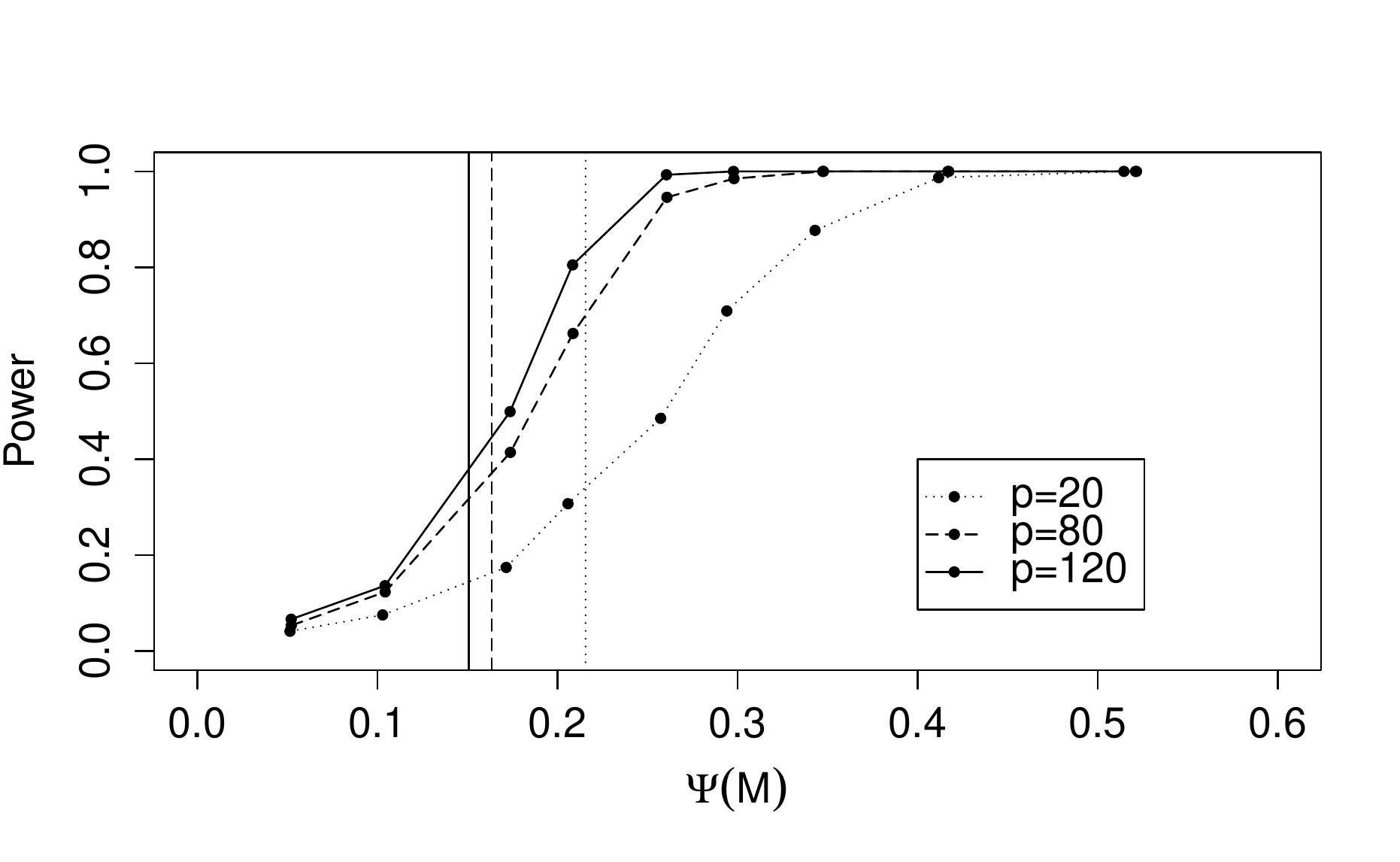}
\caption{Power curves of the $\Delta$-test as function of $\psi(M)$ for $n=20 $ and $p \in \{20, 80, 120\}$}
\label{plusieursp}
\end{figure}
\end{center}
Figure \ref{plusieursp} shows that the power is an increasing function of $\psi(M)$. Also, we can see that for a fixed value of $\psi(M)$ the power increases with $p$. Indeed, our procedure benefits from large values of $p$, which is not a nuisance parameter here.

Second we consider the tridiagonal matrices $\Sigma(\rho)= (\mathds{1}_{ \{ i=j \}} + \rho \mathds{1}_{ \{ |i-j|=1 \} })_{1 \leq i,j \leq p}$, for $\rho \in (0, 0.35]$ under the  alternative hypothesis. We compare our test procedure to the one given in  \citep{CaiMa13}, which is based on a $U$-statistic of order 2, we denote it CM-test. Moreover, the matrices $\Sigma(\rho)$ are Toeplitz, we also compare our the procedure to the one proposed in \citep{ButuceaZgheib15} for Toeplitz covariance matrices, that we denote by BZ-test. The thresholds are evaluated empirically for each procedure at type I error probability smaller than 0.05. We finally plot the powers curves of the three test procedures.
\begin{center}
 \begin{figure}[hpt!]
\includegraphics[width= 7cm, height=6cm]{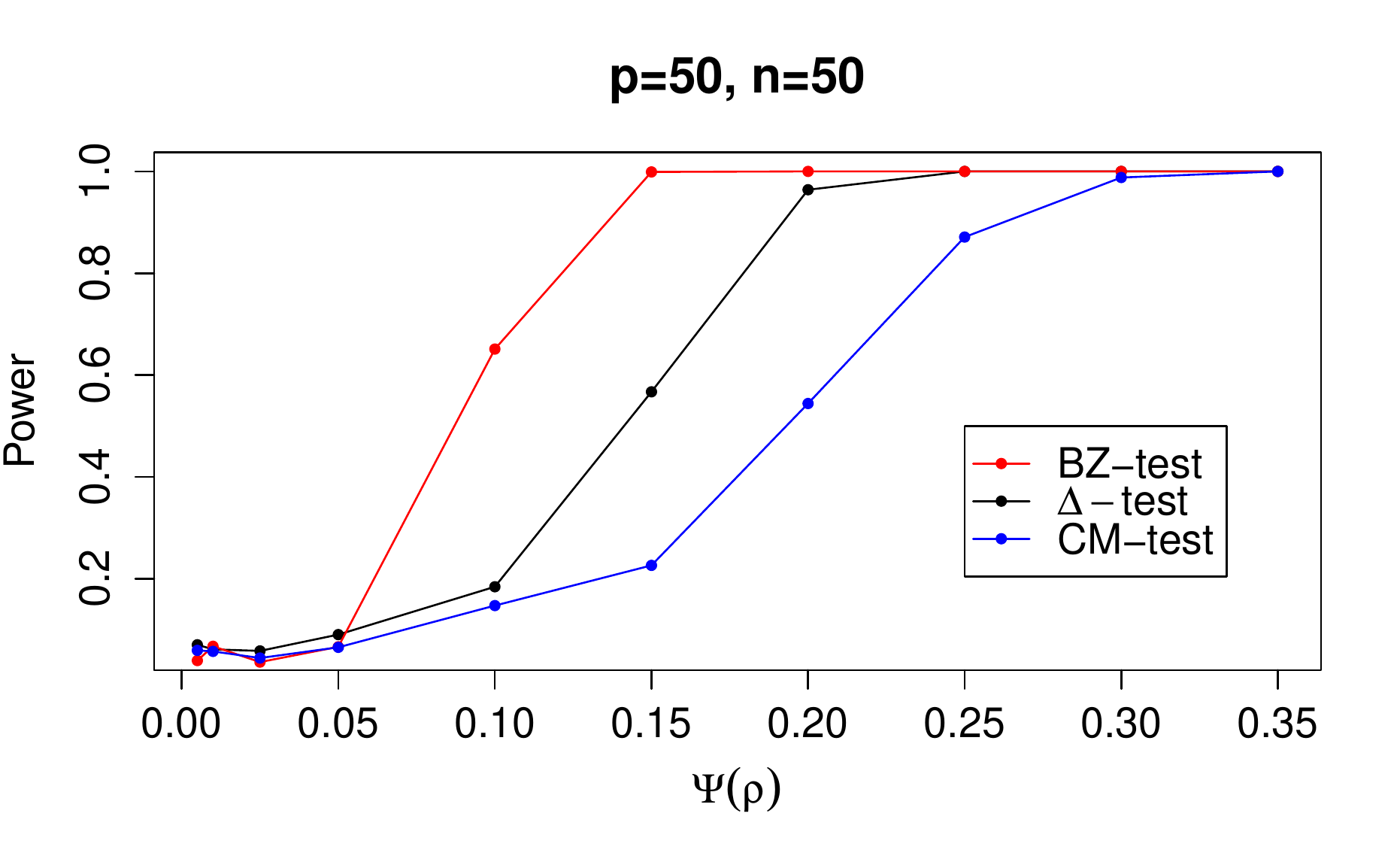}
\includegraphics[width= 7cm, height=6cm]{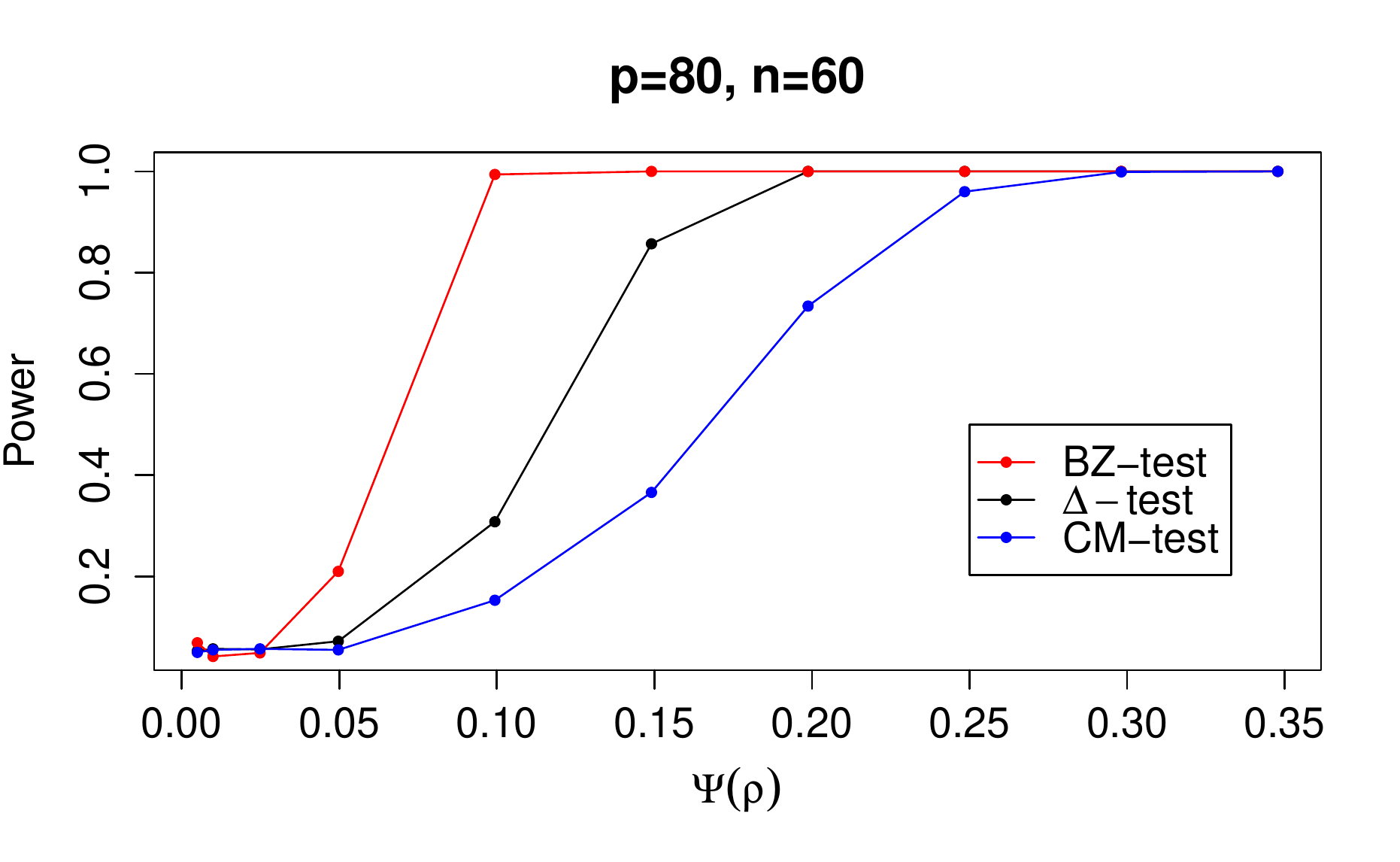}
\caption{Power curves of the BZ-test, $\Delta$-test and CM-test as function of $\psi(\rho)$}
\label{comparaison}
\end{figure}
\end{center}
Figure \ref{comparaison} shows that, when the alternative hypothesis consists of Toeplitz matrices the BZ-test has the better performance. However if we miss  the information that the matrix is Toeplitz, we see that  the $\Delta$-test  is not bad and its power dominates the power of the CM-test.

\section{Asymptotic optimality}\label{sec:lowerbounds}
In this section, we first state the lower bound for testing, which, in addition to the test procedure exhibited in the previous section, shows that the asymptotically minimax separation rate is
\begin{equation} \label{phitilde}
\widetilde\varphi =\left(n\ds\sqrt{p}\,  C^{1/2}(\alpha,L) \right)^{-\frac{2\alpha }{4\alpha+1}},
\end{equation}
where the constant $C(\alpha,L)$ is given by (\ref{constants}).  
\begin{theorem}
\label{theo:borneinfO}
Assume that, either $\alpha > 3/2$, or $\alpha >5/8$ and $np \varphi^{6 - \frac 2\alpha} \to 0 $. If
\begin{equation*}
\label{conditionborneinfO}
\varphi \to 0, \, p \varphi^{1/\alpha} \to \infty, \, \mbox{ and }
n^2 p \, b^2(\varphi) \to 0 ,
\end{equation*}
then
\[\gamma =\inf\limits_{ \Delta }  \gamma ( \Delta , Q(\alpha,L, \varphi))
\to 1,
\]
where the infimum is taken over all test statistics $\Delta$.
\end{theorem}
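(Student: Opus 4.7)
The plan is to derive this lower bound via the standard chi-square two-hypothesis reduction: for any prior $\pi$ supported on $Q(\alpha,L,\varphi)$, we have
\[
\gamma \ge 1 - \tfrac{1}{2}\|\mathbb{P}_\pi - \mathbb{P}_I\|_{TV}
\ge 1 - \tfrac{1}{2}\sqrt{\mathbb{E}_I\bigl[(dP_\pi/dP_I)^2\bigr]-1},
\]
so it suffices to exhibit a prior $\pi$ on $Q(\alpha,L,\varphi)$ for which the chi-square functional on the right tends to $0$. I would first state this inequality cleanly and note that, after unfolding the mixture, the chi-square equals $\mathbb{E}_{\pi\otimes\pi}[G(H_1,H_2)] - 1$, with $H_a = \Sigma_a-I$ and, by the explicit Gaussian identity,
\[
G(H_1,H_2) \;=\; \mathbb{E}_I\!\left[\frac{L_{\Sigma_1}L_{\Sigma_2}}{L_I^{\,2}}\right] \;=\; \det(I-H_1 H_2)^{-n/2}.
\]

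Next I would construct the least-favourable prior by mirroring the upper-bound weights. Set $\sigma_{ii}=1$ and, for $i<j$, draw $\sigma_{ij} = u_{ij}\,\epsilon_{ij}$ with $(\epsilon_{ij})_{i<j}$ i.i.d.\ Rademacher and
\[
u_{ij}^2 \;=\; c_1\,\bigl(1-(|i-j|/T)^{2\alpha}\bigr)_+ ,
\]
with $T$ and $c_1$ chosen so that both constraints $(1/p)\sum_{i<j}u_{ij}^2 = \varphi^2$ and $(1/p)\sum_{i<j}u_{ij}^2|i-j|^{2\alpha} = L$ hold asymptotically; a short sum-to-integral computation shows this forces $T \sim C_T(\alpha,L)\varphi^{-1/\alpha}$ and $c_1\sim\varphi^2(2\alpha+1)/(2\alpha T)$, matching the upper-bound tuning. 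Since $\epsilon_{ij}^2=1$ both ellipsoid constraints hold \emph{deterministically}, so the prior lives in $\mathcal{F}(\alpha,L)\cap Q(\alpha,L,\varphi)$ as soon as $\Sigma$ is positive semidefinite; by Gershgorin, $\|\Sigma-I\|_{op}\le 2T\max_{ij}u_{ij}=O(\varphi^{1-1/(2\alpha)})=o(1)$ for $\alpha>1/2$, so PSD holds for all large $n,p$. Restricting $\pi$ to this set costs nothing.

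The chi-square computation then proceeds by expanding $-\log\det(I-H_1H_2)=\sum_{k\ge 1}\tfrac{1}{k}\mathrm{tr}((H_1H_2)^k)$. The leading ($k=1$) contribution is $\tfrac{n}{2}\mathrm{tr}(H_1H_2) = n\sum_{i<j}u_{ij}^2\epsilon_{ij}^{(1)}\epsilon_{ij}^{(2)}$; using independence of $\epsilon^{(1)},\epsilon^{(2)}$ and $\mathbb{E}[\exp(t\epsilon)]=\cosh(t)$ for Rademacher $\epsilon$, this exponential factorises and its expectation equals $\prod_{i<j}\cosh(n u_{ij}^2)$. The bound $\log\cosh(x)\le x^2/2$ together with the sum-to-integral evaluation
\[
\sum_{i<j} (n u_{ij}^2)^2 \;\asymp\; n^2 p\, c_1^2\, T\!\int_0^1\!(1-x^{2\alpha})^2\,dx \;\asymp\; n^2 p\, b^2(\varphi),
\]
which goes to $0$ by assumption, shows the leading term contributes $o(1)$ to $\chi^2$.

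The main obstacle will be controlling the tail of the log-det expansion, namely $\tfrac{n}{2}\sum_{k\ge 2}\tfrac{1}{k}\mathrm{tr}((H_1H_2)^k)$, and this is precisely where the hypotheses $\alpha>3/2$, or $\alpha>5/8$ with $np\varphi^{6-2/\alpha}\to 0$, are used. On the high-probability event $\{\|H_1\|_{op}\vee\|H_2\|_{op}\le C\varphi^{1-1/(2\alpha)}\}$ one has $\|H_1H_2\|_{op}=O(\varphi^{2-1/\alpha})=o(1)$, so the remainder is dominated by $\tfrac{n}{4(1-o(1))}\mathbb{E}\,\mathrm{tr}((H_1H_2)^2)$. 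Expanding this trace as a sum over closed walks of length $4$ on the index set and taking expectation over the Rademacher signs (only index patterns yielding even multiplicities survive), one reduces to combinatorial sums of the form $n\sum_{i,j,k,l}u_{ij}^2 u_{jk}u_{kl}^2 u_{li}$ and similar, which can be bounded by $n\cdot p\cdot T\cdot(\max u_{ij}^2)\cdot (\sum u_{ij}^2)^2$ and kindred quantities; plugging in $\sum u^2 \sim p\varphi^2$ and $\max u^2 \sim \varphi^{2+1/\alpha}/(\mathrm{const})$, these turn out to be precisely of order $np\varphi^{6-2/\alpha}$ (up to constants) or of smaller order provided $\alpha>3/2$. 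Higher $k$ contribute geometrically smaller terms via $\|H_1H_2\|_{op}^{k-2}=o(1)^{k-2}$. Combining all of this shows $\chi^2(\mathbb{P}_\pi,\mathbb{P}_I)\to 0$, and the Bayesian inequality yields $\gamma\to 1$. I expect the bookkeeping of these mixed Rademacher moments of $\mathrm{tr}((H_1H_2)^k)$, rather than any single inequality, to be the technically heaviest step.
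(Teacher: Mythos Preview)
Your overall strategy matches the paper's: reduce to the chi-square bound with a Rademacher-sign prior on the extremal profile, compute $\mathbb{E}_I[(dP_\pi/dP_I)^2]=\mathbb{E}_{\epsilon^{(1)},\epsilon^{(2)}}\det(I-H_1H_2)^{-n/2}$, expand the log-determinant, and control the leading contribution via $\cosh(x)\le e^{x^2/2}$. The prior construction, the Gershgorin check for positive semidefiniteness, and the $k=1$ computation are correct and essentially identical to the paper's.

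The gap is in your handling of the $k\ge 2$ tail. You write that ``the remainder is dominated by $\tfrac{n}{4(1-o(1))}\,\mathbb{E}\,tr((H_1H_2)^2)$'' and proceed to take the Rademacher expectation of this trace. But the object to be bounded is $\mathbb{E}_\epsilon\bigl[\exp\bigl(\tfrac{n}{2}\sum_{k\ge1}\tfrac1k\,tr((H_1H_2)^k)\bigr)\bigr]$, and the expectation does not commute with the exponential. A uniform-in-signs bound on $\tfrac{n}{4}|tr((H_1H_2)^2)|$ via $|tr(A^2)|\le\|A\|_F^2$ and $\|H_1H_2\|_F^2\le\|H_1\|^2\|H_2\|_F^2=O(p\varphi^{4-1/\alpha})$ yields only $O(np\varphi^{4-1/\alpha})$, which does \emph{not} vanish under the stated hypotheses. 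The paper's fix has two parts. First, all terms of total degree $\ge 6$ in $\sigma^*$ (the $k\ge 3$ tail and the degree-$6$ cross terms arising after symmetrising $I-H_1H_2$) are bounded \emph{uniformly in the signs} by $O(p\varphi^{6-2/\alpha})$ via $\|AB\|_F\le\|A\|\,\|B\|_F$ and Cauchy--Schwarz, so the factor $e^{O(np\varphi^{6-2/\alpha})}$ can be pulled outside the expectation and tends to $1$ by hypothesis. Second, the degree-$4$ piece $\tfrac{n}{4}tr((H_1H_2)^2)$ is \emph{not} replaced by its expectation; it is expanded into Rademacher monomials indexed by the coincidence pattern of the $4$-walk $i\to h\to j\to l\to i$ (two, three, or four distinct vertices), and these monomials, together with the $k=1$ monomials $\epsilon^{(1)}_{ij}\epsilon^{(2)}_{ij}$, produce a factorised product of $\cosh$'s by the same mechanism you already use for $k=1$. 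After $\cosh\le\exp(\cdot^2/2)$ the resulting sums $n^2\sum_{i,j,l}\sigma^{*4}_{il}\sigma^{*4}_{lj}$ and $n^2\sum_{i,j,h,l}\sigma^{*2}_{ih}\sigma^{*2}_{jl}\sigma^{*2}_{hj}\sigma^{*2}_{li}$ are both $o(n^2pb^2(\varphi))$, while the deterministic diagonal part $\tfrac{n}{2}\sum_{i\ne j}\sigma^{*4}_{ij}=O(np\varphi^{4+1/\alpha})=o(1)$. Your displayed bound $n\cdot p\cdot T\cdot(\max u_{ij}^2)\cdot(\sum u_{ij}^2)^2$ does not correspond to any of these quantities and, with $\sum_{i<j}u_{ij}^2\sim p\varphi^2$ and $\max u_{ij}^2\asymp\varphi^{2+1/\alpha}$, evaluates to $O(np^3\varphi^6)$, which diverges.
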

Together with Theorem~\ref{theo:bornesupMinimax}, the proof that $\widetilde{\varphi}$ is asymptotically minimax, under our assumptions, is complete. Note that the condition $ np \varphi^{6 - \frac 2\alpha} \to 0  $ is verified when $\alpha >3/2$  for all $n$ and $p \to + \infty$ giving a general result in this case. When $5/8<\alpha< 3/2$, the same condition holds for $p =o(n^{\frac{8 \alpha -5}{3 -2\alpha}})$. This result is proven by showing that the $\chi^2$ distance between the null hypothesis and an averaged likelihood under the alternative (that we explicitly construct) tends to 0.

Moreover, we give a sharp lower bound for the type II error probability which is of Gaussian type.
\begin{theorem}
\label{theo:borneinf}
Assume that  $\alpha >1$ and if
 \begin{equation}
\label{conditionborneinf}
\varphi \to 0, \, p \varphi^{1/\alpha} \to \infty, \, \sqrt{p} \varphi^{2 -\frac 1{2 \alpha}} \to 0 \mbox{ and }
n^2 p \, b^2(\varphi) \asymp 1 ,
\end{equation}
$$
\inf\limits_{ \Delta: \eta(\Delta) \leq w }  \beta ( \Delta , Q(\alpha,L, \varphi)) \geq \Phi(z_{1-w} - n\sqrt{p} b(\varphi) ) +o(1),
$$
where the infimum is taken over all test statistics $\Delta$ with type I error probability less than or equal to $w$.
Moreover,
\[\gamma =\inf\limits_{ \Delta }  \gamma ( \Delta , Q(\alpha,L, \varphi)) \geq 2 \Phi(-n \sqrt{p} \,\frac{b(\varphi)}2) + o(1).
\]
\end{theorem}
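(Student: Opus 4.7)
My plan is to use the standard Bayesian reduction: for any prior $\pi$ supported on $Q(\alpha,L,\varphi)$,
$$
\inf_{\Delta:\eta(\Delta)\leq w}\beta(\Delta, Q(\alpha,L,\varphi))\;\geq\; \inf_{\Delta:\eta(\Delta)\leq w}\int \mathbb{E}_\Sigma[1-\Delta]\,d\pi(\Sigma),
$$
and the Neyman--Pearson lemma identifies the right-hand side with the optimal type II error for testing $\mathbb{P}_I$ against the mixture $\bar{\mathbb P}_\pi=\int\mathbb{P}_\Sigma\,d\pi(\Sigma)$. The proof thus reduces to building a prior and proving that, under $\mathbb{P}_I$,
$$
\log\frac{d\bar{\mathbb P}_\pi}{d\mathbb{P}_I}\;\overset{d}{\to}\;\mathcal{N}\!\left(-\frac{\mu^2}{2},\mu^2\right),\qquad \mu^2=n^2 p\,b^2(\varphi),
$$
from which the bounds $\beta\geq \Phi(z_{1-w}-\mu)+o(1)$ and $\gamma\geq 2\Phi(-\mu/2)+o(1)$ are immediate consequences of the classical two-point Bayes computation.

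For the prior I would put Rademacher signs on a least-favourable band pattern: for $1\leq i<j\leq p$ with $|i-j|\leq T$ let $\varepsilon_{ij}$ be independent $\pm 1$ symmetric variables, and set $\sigma_{ij}=\varepsilon_{ij}v_{|i-j|}$ with $v_d^2 = A\big(1-(d/T)^{2\alpha}\big)_+$, where $A$ and $T$ are tuned to the saddle-point values determined by the optimization problem~(\ref{optprob}), so that $p^{-1}\sum_{i<j}v_{|i-j|}^2=\varphi^2$ and $p^{-1}\sum_{i<j}v_{|i-j|}^2 |i-j|^{2\alpha}\leq L$. The matrix $V=\Sigma-I$ is then a Rademacher band matrix with maximum entry $v_0\asymp \varphi^{1+1/(2\alpha)}$, and standard bounds give $\|V\|_{\mathrm{op}}=O_{\mathbb P}\big(\sqrt{T}\,v_0\,(\log p)^{c}\big)=O(\varphi\,(\log p)^{c})=o(1)$, so $\Sigma=I+V\succ 0$ on an event of probability $1-o(1)$, which allows me to restrict the prior to $Q(\alpha,L,\varphi)$ at no asymptotic cost.

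The distribution of the log-likelihood ratio is the technical heart. The Gaussian two-sample identity
$$
\mathbb{E}_I\!\left[L_\Sigma L_{\Sigma'}\right]=\det(\Sigma\Sigma')^{-n/2}\det(\Sigma^{-1}+\Sigma'^{-1}-I)^{-n/2},
$$
expanded in $V,V'$, yields $\log\mathbb{E}_I[L_\Sigma L_{\Sigma'}]=\tfrac{n}{2}\operatorname{tr}(VV')+R(V,V')$ with a log-determinant remainder $R$ of order $n\|V\|_F^{4}$. Averaging the leading term over two independent draws from the Rademacher prior gives
$$
\mathbb{E}_{\pi\otimes\pi}\!\exp\!\Big(\tfrac{n}{2}\operatorname{tr}(VV')\Big)=\!\!\prod_{\substack{1\leq i<j\leq p\\|i-j|\leq T}}\!\!\cosh\!\big(n\,v_{|i-j|}^{2}\big),
$$
and since the parameters are chosen so that $\tfrac{n^{2}}{2}\sum v_{|i-j|}^{4}=\mu^{2}+o(1)$ with $nv_0^2\asymp 1/\sqrt{p}\to 0$, I obtain $\chi^{2}(\bar{\mathbb P}_\pi,\mathbb{P}_I)\to e^{\mu^{2}}-1$. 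A Lindeberg CLT applied to the linear-in-$V$ part of $\log L_\Sigma$, namely the bilinear form $W=n\sum_{k}\sum_{i<j,\,|i-j|\leq T}\varepsilon_{ij}v_{|i-j|}X_{k,i}X_{k,j}$, then upgrades this chi-square bound to the Gaussian convergence claimed above, once the higher-order Taylor remainders in the log-determinant are shown to be negligible.

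The main obstacle is precisely this control of the cubic and higher remainders. Terms of order $k$ in $V$ contribute typically $n^{k}\sum v^{2k}\asymp p\,(n^{2}v_0^{2})^{k/2}$-type quantities, and the hypothesis $\alpha>1$ together with the rate condition $\sqrt{p}\,\varphi^{2-1/(2\alpha)}\to 0$ (which amounts to $p=o(n^{4\alpha-1})$ at $\varphi=\widetilde\varphi$) make these $o(1)$, in contrast with the softer chi-square method used for Theorem~\ref{theo:borneinfO}. Once the Gaussian limit for $\log(d\bar{\mathbb P}_\pi/d\mathbb{P}_I)$ is in place, contiguity implies that any level-$w$ test has type II error at least $\Phi(z_{1-w}-\mu)+o(1)$, and the total error bound $\gamma\geq 2\Phi(-\mu/2)+o(1)$ follows from $1-\mathrm{TV}\big(\mathcal{N}(-\mu^2/2,\mu^2),\mathcal{N}(\mu^2/2,\mu^2)\big)=2\Phi(-\mu/2)$.
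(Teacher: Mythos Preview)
Your overall strategy coincides with the paper's: the same least-favourable Rademacher-sign prior on the extremal profile $\sigma_{ij}^{*2}=\lambda\big(1-(|i-j|/T)^{2\alpha}\big)_+$, the same Bayesian reduction, and the same target, namely the LAN-type expansion $\log(d\bar{\mathbb P}_\pi/d\mathbb P_I)=u_n Z_n-\tfrac12 u_n^2+o_{P_I}(1)$ with $u_n=n\sqrt{p}\,b(\varphi)$.

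Two points of divergence deserve comment. First, for positive definiteness the paper does not use a random-matrix bound but a deterministic Gershgorin argument: for \emph{every} sign pattern $U$, $\|\Sigma_U^*-I\|\le \max_i\sum_{j\ne i}\sigma_{ij}^*=O(\sqrt{\lambda}\,T)=O(\varphi^{1-1/(2\alpha)})=o(1)$ as soon as $\alpha>1/2$. This is simpler, avoids truncating the prior to a high-probability event, and supplies the uniform spectral bound used elsewhere (e.g.\ Section~\ref{sec:inverse}).

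The genuine gap is in your route to the Gaussian limit. The $\chi^2$ computation you outline is exactly the content of Theorem~\ref{theo:borneinfO} and by itself yields only $\gamma\ge 1-\tfrac12(e^{\mu^2}-1)^{1/2}$, not the sharp bound $2\Phi(-\mu/2)$. Your proposal to ``upgrade'' via a Lindeberg CLT on $W=\sum_k\sum_{i<j}\varepsilon_{ij}v_{|i-j|}X_{k,i}X_{k,j}$ does not work as stated: $W$ is random in $\varepsilon$, and the object of interest is $\log\mathbb E_\varepsilon[\exp(W+\cdots)]$, whose distribution under $P_I$ is not governed by a CLT for $W$. The paper averages over the signs \emph{first}: after Taylor-expanding $(\Sigma_U^*)^{-1}-I$ and $\log\det\Sigma_U^*$ in powers of $\Delta_U=\Sigma_U^*-I$ (to fifth order, using $\|\Delta_U\|=o(1)$), the Rademacher average turns the linear part into $\sum_{i<j}\log\cosh(\sigma_{ij}^*W_{ij})$ with $W_{ij}=\sum_k X_{k,i}X_{k,j}$, and a second Taylor expansion of $\log\cosh$ gives
\[
\tfrac12\sum_{i<j}\sigma_{ij}^{*2}W_{ij}^2-\tfrac1{12}\sum_{i<j}\sigma_{ij}^{*4}W_{ij}^4+o_{P_I}(1).
\]
The decisive observation is that, since $\sigma_{ij}^{*2}=2\,w_{ij}^*\,b(\varphi)$, the off-diagonal part $\tfrac12\sum_{k\ne l}\sum_{i<j}\sigma_{ij}^{*2}X_{k,i}X_{k,j}X_{l,i}X_{l,j}$ equals $u_n\cdot(n-1)\sqrt{p}\,\widehat{\mathcal D}_n$: the random variable $Z_n$ in the LAN expansion is precisely the normalised optimal test statistic, whose asymptotic normality under $P_I$ is already established in Proposition~\ref{prop:espvar}. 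The diagonal piece of $\sum\sigma_{ij}^{*2}W_{ij}^2$, the quartic term, and the $-\tfrac12\sum\sigma_{ih}^{*2}(W_{ii}-n/2)$ contribution from the $\Delta_U^2$ expansion then combine to the deterministic drift $-u_n^2/2$. This identification of $Z_n$ with $\widehat{\mathcal D}_n$ is what forces the lower and upper bounds to meet with the same constant $C(\alpha,L)$; your sketch bypasses it, and the condition $\sqrt{p}\,\varphi^{2-1/(2\alpha)}\to 0$ enters precisely in showing that the second- and higher-order $\log\cosh$ corrections (e.g.\ $\sum_{h\notin\{i,j\}}\log\cosh(\sigma_{ih}^*\sigma_{hj}^*W_{ij})$) are $o_{P_I}(1)$.
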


Theorems \ref{theo:bornesup} and \ref{theo:borneinf} imply that for $\alpha >1$, the sharp separation rate for minimax testing is
$ \widetilde{\varphi} $, under the additionnal assumptions (\ref{conditionbornesup}) and (\ref{conditionborneinf}). Note that a sufficient condition is that the separation rate verifies these assumptions, in particular  $p \widetilde \varphi^{1/\alpha} \to \infty$ holds if $n = o(1) p^{2\alpha}$, and  $\ds\sqrt{p} \widetilde\varphi^{2 -\frac 1{2 \alpha}} \to 0$ holds if $p = o(n^{4 \alpha - 1}) $.

Note that, there is a more general test procedure independent of $\varphi$, for which it is possible to derive the upper bounds as in Theorems \ref{theo:bornesupMinimax} and \ref{theo:bornesup}. It suffices to use the test statistic $ \widehat{\mathcal{D}}_n $ with the weights $w_{ij}^*$ replaced by the weights $w_{ij}^*( \widetilde\varphi) $ defined as  in \eqref{weights} and \eqref{constants} for $\varphi$ replaced by $\widetilde{\varphi}$. For more details see section 4.2 in \citep{ButuceaGayraud2013}.

The proof of the lower bounds is given in  Section~\ref{sec:proofs}.
We construct a family of $n$ large centered Gaussian vectors with covariance matrices based on $\{\sigma^*_{ij}\}_{1 \leq i,j \leq p}$ given by the optimization problem \eqref{optprob} and a prior measure $P_\pi$ on these covariance matrices.
The logarithm of the likelihood ratio  associated to an arbitrary $\Sigma$ with respect to $I$ under the null hypothesis is known to drift away to infinity (see \cite{bai2009}, who corrected this ratio to get a proper limit). However, we show that the logarithm of the Bayesian likelihood ratio with our prior measure $P_\pi$ verifies
\begin{equation*}
\log \frac{f_\pi}{f_I}(X_1,...,X_n) = u_n Z_n -\frac{u_n^2}2 + \xi, \quad \text{ in $P_I$ probability}
\end{equation*}
where $u_n = n \sqrt{p} b(\varphi)$,  $Z_n$ is asymptotically distributed as a standard Gaussian distribution and $\xi$ is a random variable which converges to zero under $P_I$ probability.

\section{Testing the inverse of the covariance matrix}\label{sec:inverse}

Let us consider the same model, but the following test problem
$$
H_0: \Sigma^{-1} = I
$$
against the alternative
$$
H_1: \Sigma \in \mathcal{G}(\alpha, L, \lambda) \mbox{ such that } \frac 1{2p} \|\Sigma ^{-1}- I\|_F^2 \geq \psi^2,
$$
where $\mathcal{G}(\alpha, L, \lambda)$ is the class of covariance matrices $\Sigma$ in $\mathcal{F}(\alpha, L)$ with the additional constraint that the eigenvalues $\lambda_i(\Sigma)$ are bounded from below by some $\lambda \in (0,1)$ for all $i$ from 1 to $p$ and all $\Sigma$ in the set.

We prove here that previous results apply to this setup and we get the same rates, but not the sharp asymptotics.
Note that, the additional hypothesis is mild enough so that it does not change the rates for testing. Indeed, we see this case as a well-posed inverse problem. The cases of ill-posed inverse problem where the smallest eigenvalue can be allowed to tend to 0 will most certainly imply a loss in the rate and is beyond the scope of this paper.

\begin{theorem} Suppose $\alpha >3/2$, $L>0$ and $\lambda \in (0,1)$. If $n$ and $p$ tend to infinity, such that $ n = o(1) p^{2\alpha } $, then
$\widetilde \varphi$ defined in (\ref{phitilde}) is the asymptotically minimax rate for the previous test.
\end{theorem}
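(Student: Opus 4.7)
My plan is to reduce the two halves of this rate-minimax statement to the corresponding results for testing $\Sigma = I$ already proved in Theorems~\ref{theo:bornesupMinimax} and~\ref{theo:borneinfO}. The bridge is the elementary norm equivalence $\|\Sigma^{-1} - I\|_F \asymp \|\Sigma - I\|_F$, which holds whenever $\|\Sigma - I\|_{op}$ is small and the smallest eigenvalue of $\Sigma$ is bounded away from $0$ by $\lambda$.

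\textbf{Upper bound.} Note that the null hypotheses coincide, so $\Delta^*$ from (\ref{test*}) keeps the same type I error control. On the alternative, I would write $\Sigma^{-1} - I = -\Sigma^{-1}(\Sigma - I)$ and use $\|\Sigma^{-1}\|_{op} \leq 1/\lambda$ (from $\Sigma \in \mathcal{G}(\alpha, L, \lambda)$) to get
$$
\|\Sigma - I\|_F \;\geq\; \lambda \, \|\Sigma^{-1} - I\|_F \;\geq\; \lambda \, \psi \, \sqrt{2p} \, ,
$$
so every $\Sigma$ in the inverse alternative lies in $Q(\alpha, L, \lambda\psi)$. Tuning $\Delta^*$ at separation $\lambda\psi$ and verifying the hypotheses of Theorem~\ref{theo:bornesupMinimax} with $\varphi = \lambda\psi$ (which, up to the constant $\lambda$, amount to $\psi/\widetilde{\varphi} \to \infty$ together with $n = o(p^{2\alpha})$) will yield $\gamma(\Delta^*) \to 0$ uniformly over the inverse alternative.

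\textbf{Lower bound.} I would reuse the prior measure $P_\pi$ constructed in the proof of Theorem~\ref{theo:borneinfO}, parameterized by any sequence $\psi$ with $\psi/\widetilde{\varphi} \to 0$ and $p\psi^{1/\alpha} \to \infty$ (non-vacuous thanks to $n = o(p^{2\alpha})$). The support of $P_\pi$ consists of matrices $\Sigma = I + A_\pi$ with a random banded symmetric perturbation whose entries satisfy $|(A_\pi)_{ij}| \leq \sigma^*_{ij} \asymp \psi/\sqrt{T}$ for $|i-j| \leq T = \lfloor C_T \psi^{-1/\alpha}\rfloor$ (and zero otherwise), with $\|A_\pi\|_F^2/(2p) \geq \psi^2(1-o(1))$ and $\Sigma \in \mathcal{F}(\alpha, L)$. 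A deterministic row-sum bound would give
$$
\|A_\pi\|_{op} \;\leq\; (2T+1) \, \max_{ij}|\sigma^*_{ij}| \;=\; O\bigl(T \cdot \psi/\sqrt{T}\bigr) \;=\; O\bigl(\psi^{1 - 1/(2\alpha)}\bigr) \;=\; o(1),
$$
uniformly on the support of $P_\pi$, since $\alpha > 1/2$. On this event, for $p$ large, all eigenvalues of $\Sigma$ lie in $(\lambda, 2-\lambda)$, so $\Sigma \in \mathcal{G}(\alpha, L, \lambda)$; and from $\Sigma(\Sigma^{-1} - I) = -A_\pi$ one gets
$$
\|\Sigma^{-1} - I\|_F \;\geq\; \frac{\|A_\pi\|_F}{\|\Sigma\|_{op}} \;\geq\; \frac{\|A_\pi\|_F}{1 + \|A_\pi\|_{op}} \;\geq\; \psi\sqrt{2p}\,(1-o(1)) \, .
$$
Hence $P_\pi$ would be asymptotically concentrated on the inverse alternative, and the $\chi^2$-argument in the proof of Theorem~\ref{theo:borneinfO} would apply verbatim, yielding $\gamma \to 1$.

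The main (indeed only) step that does not follow directly from the existing theorems is the control of $\|A_\pi\|_{op}$ above; the key balance to exhibit is that the bandwidth $T$ and the amplitude $\sigma^*_{ij}$ combine so that $T \cdot \max_{ij}|\sigma^*_{ij}| \asymp \psi^{1 - 1/(2\alpha)} \to 0$ because $\alpha > 1/2$. Everything else (construction of $P_\pi$, verification of $\Sigma \in \mathcal{F}(\alpha, L)$, the $\chi^2$ bound and the asymptotic Gaussianity of the log Bayes likelihood ratio) would be inherited verbatim from the proof of Theorem~\ref{theo:borneinfO}.
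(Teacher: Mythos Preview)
Your proposal is correct and follows essentially the same route as the paper: for the upper bound you use $\|\Sigma-I\|_F \geq \lambda\|\Sigma^{-1}-I\|_F$ exactly as the paper does, and for the lower bound you check that the prior family lands in $\mathcal{G}(\alpha,L,\lambda)$ via an operator-norm bound and then pass back through $\|\Sigma^{-1}-I\|_F \geq \|\Sigma-I\|_F/\|\Sigma\|_{op}$, again matching the paper. The step you single out as ``the main one'' --- the row-sum bound $\|A_\pi\|_{op} = O(\psi^{1-1/(2\alpha)})$ --- is precisely the content of Proposition~\ref{cor1} (proved there by Gershgorin), so you are not adding a new ingredient but correctly identifying and reproving the one the paper already provides; your use of $\|\Sigma\|_{op}=1+o(1)$ in place of the paper's fixed $\lambda_{\max}$ is a cosmetic sharpening that does not change the rate argument.
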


\begin{proof} Note that $\Sigma^{-1} = I$ if and only if $\Sigma = I$. Moreover, if $\Sigma$ belongs to $\mathcal{G}(\alpha, L, \lambda)$ such that $\frac 1{2p} \|\Sigma ^{-1}- I\|_F^2 \geq \psi^2$, then $\Sigma$ obviously belongs to $\mathcal{F}(\alpha, L)$ and is such that
$$
\frac 1{2p} \|\Sigma - I\|_F^2 \geq \frac{\lambda^2}{2 p} \|\Sigma ^{-1}- I\|_F^2 \geq \lambda^2\psi^2.
$$
Thus we can proceed with our former test procedure, with $\varphi$ replaced by $\lambda \psi$ and we obtain the upper bounds in the definition of the separation rates.

The lower bounds in the previous Section will also remain valid. Indeed, this proof is based on the construction of a subfamily $\{\Sigma^*_U: u\in \mathcal{U}\}$ on the set of alternatives. We have proven in Proposition~\ref{cor1}, that
$$
\min_i \lambda_i(\Sigma^*_U) \geq 1 - O(\varphi^{1-1/(2\alpha)}),
$$
and we have $\alpha >1$ and $\varphi = \lambda \psi \to 0$ as $\psi \to 0$ and therefore, $1-O(\varphi^{1-1/(2\alpha)}) \geq \lambda$ for $\psi >0$ small enough. Thus, this family belongs to the set of alternatives we consider here, as well. Moreover, Proposition~\ref{cor1} proves also that
$$
\|\Sigma^*_U\|_2:=\max_i \lambda_i(\Sigma^*_U) \leq 1 + O(\varphi^{1-1/(2\alpha)}) \leq \lambda_{max},
$$
for some fixed $\lambda_{max}$ free of $\alpha$ and $L$. Thus,
$$
\frac{1}{2 p} \|(\Sigma^*_U) ^{-1}- I\|_F^2  \geq  \frac{1}{2 p \cdot \|\Sigma^*_U\|_2^2 } \cdot  \|\Sigma^*_U - I\|_F^2\geq \frac 1{\lambda_{max}^2} \frac{1}{2 p  } \cdot  \|\Sigma^*_U - I\|_F^2.
$$
Thus we proceed the same way with $\varphi$ replaced by $\lambda_{max} \psi$.
\end{proof}


\section{Adaptive testing procedure}
\label{adaptivity}

We want to built a test procedure of $H_0$  in \eqref{h0} which is free of the parameter $\alpha$ belonging to some closed interval $\mathcal{A} = [\underline{\alpha}, \bar{\alpha}] \subset (1, + \infty)$. The radius $L$ plays a minor role in the procedure and we suppose that it is known (w.l.o.g we assume that $L=1$). Such a procedure is called adaptive and it solves the test problem $H_0$ in \eqref{h0} against a much larger set of alternative hypotheses:
\begin{equation}
\label{alteradap}
H_1: \Sigma \in \underset{\alpha \in \mathcal{A}}{\cup} Q(\alpha, L,\mathcal{C} \psi_\alpha) \, ,
\end{equation}
where $\mathcal{C}$ is a large enough positive constant and
\begin{equation}
\label{rateadap}
\psi_\alpha=  \Big(  \ds\frac{ \rho_{n,p}}{ n \sqrt{p}} \Big)^{ \frac{2 \alpha}{4 \alpha +1}} \,  ,  \quad  \rho_{n,p}= \ds\sqrt{\ln\ln(n \ds\sqrt{p})} \, ,
\end{equation}
depend on $n$ and $p$, but also on $\alpha$.
In order to construct the adaptive test procedure, we define a finite regular grid over the set $\mathcal{A}= [ \underline{\alpha}, \bar{\alpha}]$ :
$$
\mathcal{A}_N = \{ \alpha_r = \underline{\alpha} +  \ds\frac{\bar{\alpha} - \underline{\alpha}}{N} \cdot r \, ; r = 1, \dots , N \}, \text{ where } N = \lceil \ln(n \sqrt{p}) \rceil .
$$
To each $r \in\{1, \cdots, N \}$, we associate the weights :
\[
w_{ij,r}^*= \ds\frac{\lambda_r}{b_r} \Big( 1 - \Big(\ds\frac{|i-j|}{T_r} \Big)^{2 \alpha_r} \Big)_+ ,
\]
where the parameters $\lambda_r, b_r$ and $T_r$ are given in \eqref{weights} and  \eqref{constants} with $\alpha$ replaced by $\alpha_r$ and $\varphi$ by $\psi_\alpha$.
Define the adaptive test procedure, for some constant $ \mathcal{C}^* >0$ large enough
\begin{equation}
\label{adptivetest}
\Delta_{ad}^* = \max\limits_{r=1, \dots, N} \mathds{1}( \widehat{\mathcal{D}}_{n,r} > \mathcal{C}^* t_r ) , \quad \text{ where } t_r = C_{\lambda_r} \cdot \rho_{n,p}/(n \sqrt{p}),
\end{equation}
and  where $\widehat{\mathcal{D}}_{n,r}$ is the test statistic in \eqref{est} with weights $\{ w_{ij, r} \}_{i<j}$. Note that the test $\Delta^*_{ad}$ rejects the null hypothesis as soon as there exists at least on $r \in \{1, \dots, N\}$ for which $ \widehat{\mathcal{D}}_{n,r} > \mathcal{C}^* t_r$.
\begin{theorem}
\label{theo:adaptivity}
Assume that
\[
p \cdot  \Big( \ds\frac{\rho_{n,p}}{n \ds\sqrt{p}} \Big)^{ \frac 2{4 \underline{\alpha}+1} } \to + \infty  \quad \text{and} \quad \ds\frac{\ln p}n \to 0
\]
The test statistic  defined in \eqref{adptivetest}  with $\mathcal{C}^*$ large enough
verifies :
\[
\gamma ( \Delta_{ad}^* , \underset{\alpha \in \mathcal{A} }{\cup} Q(\alpha,L, \mathcal{C}\psi_\alpha) ) \to 0,
\]
 for all  $ \mathcal{C} >  \Big(\mathcal{C}^* + \ds\frac{1}{C(\underline{\alpha}, \bar{\alpha})} \Big)$,
 where  $\psi_\alpha$ is given in \eqref{rateadap} and $
C( \underline{\alpha}, \bar{\alpha} ) = \exp(-8(\bar \alpha - \underline{\alpha})/(4 \underline{\alpha}+1))$.
 \end{theorem}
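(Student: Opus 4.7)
The plan is to decompose
$\gamma(\Delta^*_{ad}) = \eta(\Delta^*_{ad}) + \sup_{\alpha \in \mathcal{A}} \sup_{\Sigma \in Q(\alpha, L, \mathcal{C}\psi_\alpha)}\mathbb{P}_\Sigma(\Delta^*_{ad} = 0)$
and to control each piece by reducing, for each grid point $\alpha_r \in \mathcal{A}_N$, to an application of Proposition~\ref{prop:espvar} with $\varphi = \psi_{\alpha_r}$.

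For the type I error I would apply a union bound: $\eta(\Delta^*_{ad}) \leq \sum_{r=1}^{N} \mathbb{P}_I(\widehat{\mathcal{D}}_{n,r} > \mathcal{C}^* t_r)$. Under $H_0$ each $\widehat{\mathcal{D}}_{n,r}$ is a centered, completely degenerate $U$-statistic of order $2$ in i.i.d.\ standard Gaussian vectors, with $\Var_I(\widehat{\mathcal{D}}_{n,r}) = 1/(n(n-1)p)$ by Proposition~\ref{prop:espvar}, and the threshold satisfies $n\sqrt{p}\,\mathcal{C}^* t_r = \mathcal{C}^* C_{\lambda_r}\rho_{n,p}$. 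A Hanson--Wright-type exponential inequality for Gaussian quadratic forms then produces a tail of Gaussian order $\exp(-c(\mathcal{C}^*)^2 \rho_{n,p}^2) = (\ln(n\sqrt{p}))^{-c(\mathcal{C}^*)^2}$; the assumption $\ln p/n \to 0$ is used to dominate the sub-exponential part of that inequality so that only this Gaussian regime contributes. Summing over $N = \lceil \ln(n\sqrt{p})\rceil$ grid points gives $(\ln(n\sqrt{p}))^{1-c(\mathcal{C}^*)^2} \to 0$ as soon as $\mathcal{C}^*$ is chosen large enough.

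For the type II error I fix $\alpha \in \mathcal{A}$ and $\Sigma \in Q(\alpha, L, \mathcal{C}\psi_\alpha)$, and take $\alpha_r$ to be the largest grid point in $\mathcal{A}_N$ with $\alpha_r \leq \alpha$. Two observations make the reduction to index $r$ possible. First, since $|i-j|^{2\alpha_r} \leq |i-j|^{2\alpha}$ whenever $|i-j|\geq 1$ and $\alpha_r \leq \alpha$, the embedding $\mathcal{F}(\alpha, L) \subseteq \mathcal{F}(\alpha_r, L)$ holds, so $\Sigma \in \mathcal{F}(\alpha_r, L)$. Second, the spacing $\alpha - \alpha_r \leq (\bar\alpha - \underline{\alpha})/N$ combined with $N \geq \ln(n\sqrt{p})$ and a short computation using $\psi_\alpha = (\rho_{n,p}/(n\sqrt{p}))^{2\alpha/(4\alpha+1)}$ yields $\psi_\alpha/\psi_{\alpha_r} \geq C(\underline{\alpha}, \bar\alpha)$, whence $(1/p)\sum_{i<j} \sigma_{ij}^2 \geq (\mathcal{C}\,C(\underline{\alpha}, \bar\alpha))^2 \psi_{\alpha_r}^2$. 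Using the Lagrangian form $w^*_{ij,r} = \nu_r - \mu_r |i-j|^{2\alpha_r}$ on $\{|i-j|\leq T_r\}$ read off from \eqref{weights}--\eqref{constants}, and bounding the tail via the ellipsoid constraint as $(1/p)\sum_{|i-j|>T_r}\sigma_{ij}^2 \leq L/T_r^{2\alpha_r} = \psi_{\alpha_r}^2/(4\alpha_r+1)$, one obtains a lower bound of the form $\mathbb{E}_\Sigma \widehat{\mathcal{D}}_{n,r} \geq [\,(\mathcal{C}\,C(\underline{\alpha}, \bar\alpha))^2 - 2/(4\alpha_r+1)\,] \cdot (4\alpha_r+1)/(4\alpha_r) \cdot b(\psi_{\alpha_r})$, with $b(\psi_{\alpha_r}) = C^{1/2}(\alpha_r, L)\rho_{n,p}/(n\sqrt{p})$. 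For $\mathcal{C} > \mathcal{C}^* + 1/C(\underline{\alpha}, \bar\alpha)$ this strictly exceeds the threshold $\mathcal{C}^* t_r$ by a fixed multiplicative margin uniform in $r$; a Chebyshev bound using Proposition~\ref{prop:espvar} (whose hypothesis $p\,\psi_{\alpha_r}^{1/\alpha_r}\to \infty$ follows from $p(\rho_{n,p}/(n\sqrt{p}))^{2/(4\underline{\alpha}+1)}\to \infty$) then gives $\mathbb{P}_\Sigma(\widehat{\mathcal{D}}_{n,r} \leq \mathcal{C}^* t_r) \to 0$ uniformly in $\Sigma$.

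The main obstacle is the type I step: Proposition~\ref{prop:espvar} only yields the CLT at a single $r$, whereas to union-bound $N \asymp \ln(n\sqrt{p})$ thresholds each at the level $\rho_{n,p}$ standard deviations above the mean one needs a genuine Hanson--Wright inequality, with its sub-exponential remainder made negligible compared to the Gaussian term, which is precisely what the condition $\ln p / n \to 0$ guarantees. A secondary, purely bookkeeping, difficulty is the sufficiently tight tracking of constants in the comparison $\psi_\alpha/\psi_{\alpha_r}\geq C(\underline{\alpha}, \bar\alpha)$ needed to recover the exact admissibility threshold $\mathcal{C} > \mathcal{C}^* + 1/C(\underline{\alpha}, \bar\alpha)$ stated in the theorem.
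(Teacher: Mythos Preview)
Your overall strategy---union bound for type~I, reduction to a single grid index for type~II followed by Chebyshev---matches the paper's. The substantive differences are in the tool used for type~I and in the direction of the grid comparison for type~II.

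\textbf{Type I error.} There is a genuine gap here. The statistic $\widehat{\mathcal{D}}_{n,r}$ is a degree-4 polynomial in the underlying i.i.d.\ Gaussian coordinates (equivalently, an order-2 degenerate $U$-statistic in the vectors $X_k$), not a quadratic form; Hanson--Wright does not apply to it, and the role you attribute to $\ln p/n\to 0$ (controlling a sub-exponential remainder) is not how that hypothesis enters. The paper instead proves a Berry--Esseen-type bound for degenerate $U$-statistics (Lemma~\ref{lem:berry-essen}),
\[
\bigl|\mathbb{P}_I(\widehat{\mathcal{D}}_{n,r}\leq t)-\Phi(n\sqrt{p}\,t)\bigr|\leq 16\,\varepsilon^{1/2}e^{-n^2pt^2/4}+O(1/n)+O(1/(pT_r)),
\]
combines it with $1-\Phi(u)\leq u^{-1}e^{-u^2/2}$, and sums over the $N=\lceil\ln(n\sqrt p)\rceil$ grid points. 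The condition $\ln p/n\to 0$ is precisely what makes $N\cdot O(1/n)=O(\ln(n\sqrt p)/n)\to 0$, while the assumption $p(\rho_{n,p}/(n\sqrt p))^{2/(4\underline\alpha+1)}\to\infty$ controls $\sum_r 1/(pT_r)$. Your idea could be rescued by a concentration inequality for order-4 Gaussian chaos (Lata{\l}a-type moment bounds), but that is a different and heavier tool than Hanson--Wright.

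\textbf{Type II error.} You pick the largest grid point $\alpha_r\leq\alpha$ and use the embedding $\mathcal{F}(\alpha,L)\subset\mathcal{F}(\alpha_r,L)$; the paper picks the \emph{smallest} grid point $\alpha_{r_0}\geq\alpha$. With the paper's choice $\psi_\alpha\geq\psi_{\alpha_{r_0}}$ holds automatically, so the signal constraint transfers for free; the work goes instead into bounding the ellipsoid remainder $L/T_{r_0}^{2\alpha}$ against $t_{r_0}$ via $0>\alpha-\alpha_{r_0}\geq -(\bar\alpha-\underline\alpha)/N$, and this computation is exactly what produces $C(\underline\alpha,\bar\alpha)=\exp(-8(\bar\alpha-\underline\alpha)/(4\underline\alpha+1))$. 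Your opposite direction can also be made to work, but the constant you extract from $\psi_\alpha/\psi_{\alpha_r}$ will be different (of order $\exp(-c(\bar\alpha-\underline\alpha)/(4\underline\alpha+1)^2)$ after bounding $f'(x)=2/(4x+1)^2$), so you would not recover the theorem exactly as stated. Note also that the grid $\mathcal{A}_N$ starts at $\alpha_1>\underline\alpha$, so your ``largest grid point below $\alpha$'' may fail to exist when $\alpha$ is close to $\underline\alpha$.
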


The proof that the  adaptive procedure we propose  attains the above rate is given in Section \ref{sec:proofs}. By analogy to nonparametric testing of functions, we expect the loss $\rho_{n,p}$ to be optimal uniformly over the class in the alternative hypothesis \eqref{alteradap} .

\section{Proofs}\label{sec:proofs}

\label{Sectionproofs}

\begin{proof}[Proof of Theorems \ref{theo:bornesupMinimax} and \ref{theo:bornesup}]
The proof is based on the Proposition \ref{prop:espvar} and the asymptotic normality of the weighted test statistic $n \sqrt{p} \widehat{\mathcal{D}}_n $ in Proposition~\ref{AN}.
We get for the type I error probability of $\Delta^*$
\[
  \eta(\Delta) = \mathbb{P} ( \widehat{\mathcal{D}}_n > t) =1- \Phi(n\ds\sqrt{p} \cdot t) + o(1).
\]

\noindent
For the type II error probability of $\Delta^*$,
uniformly in $\Sigma $ over $ Q(\alpha, L, \varphi)$, we have 
\begin{eqnarray*}
   \mathbb{P}_{\Sigma} (\widehat{\mathcal{D}}_n \leq t )
    &\leq& \mathbb{P}_{\Sigma} (|\widehat{\mathcal{D}}_n - \mathbb{E}_{\Sigma} (\widehat{\mathcal{D}}_n)| \geq \mathbb{E}_{\Sigma} (\widehat{\mathcal{D}}_n) - t)
   \leq \ds\frac{\Var_{\Sigma}(\widehat{\mathcal{D}}_n)}{(\mathbb{E}_{\Sigma} (\widehat{\mathcal{D}}_n) - t)^2} ,
   \end{eqnarray*}
   for $t \leq c \cdot b(\varphi) $ and $0<c<1$. It implies that
   $n \sqrt{p} \cdot t \leq c n\sqrt{p} b(\varphi)$.
Therefore, we distinguish the cases where $n^2 p b^2(\varphi)$ tends to infinity or is bounded.

   We use the fact that, under the alternative, $\mathbb{E}_{\Sigma} (\widehat{\mathcal{D}}_n) \geq b(\varphi)$. We bound from below as follows:
   $$
   \mathbb{E}_{\Sigma} (\widehat{\mathcal{D}}_n) - t \geq (1-c) \mathbb{E}_{\Sigma} (\widehat{\mathcal{D}}_n) .
   $$
Then, it gives  \begin{eqnarray*}
   \mathbb{P}_{\Sigma} (\widehat{\mathcal{D}}_n \leq t )
   &\leq& \ds\frac{T_1}{n(n-1) p^2(1-c)^2\mathbb{E}^2_\Sigma(\widehat{\mathcal{D}}_n)} + \ds\frac{T_2}{np^2(1-c)^2 \mathbb{E}^2_\Sigma(\widehat{\mathcal{D}}_n)} =: S_1 + S_2.
  \end{eqnarray*}
Let us bound from above $S_1$ using (\ref{T_1}):
\[
\begin{array}{lcl}
 S_1 \leq \ds\frac{1+ o(1)}{n(n-1)p(1-c)^2b^2(\varphi)}    + \ds\frac{O(T^{3/2})}{n(n-1)p\, b(\varphi)} .
 \end{array}
\]
We have $T^{3/2} b(\varphi) \asymp \varphi^{2 - \frac{1}{\alpha}} = o(1) , \text{ for all } \alpha > 1/2,$
which proves that :
\[
 S_1 \leq \ds\frac{1+ o(1)}{n(n-1)p(1-c)^2b^2(\varphi)}
\]
which tends to $0$ provided that  $n^2p b^{2}(\varphi) \to + \infty $.
We will see using (\ref{T_2})  that the  term $S_2$ tends to 0 as well:
\[
\begin{array}{lcl}
 S_2 &\leq &   \ds\frac{O(\ds\sqrt{T})}{np \, b(\varphi)} + \frac{O(T^{3/4} b^{{1}/{2}}(\varphi))}{n \ds\sqrt{p}\, b(\varphi)} + \ds\frac{o(1)}{n \ds\sqrt{p}\, b(\varphi)}  \\[0.3 cm]
  &=& o(1) \text{ for all } \alpha > 1/2, \text{ as soon as } n \ds\sqrt{p} b(\varphi) \to + \infty.
  \end{array}
\]

Now, if $\varphi$ is  close to the separation rate: $n^2pb^2(\varphi)  \asymp 1$, we see that whenever $\mathbb{E}_\Sigma(\widehat{\mathcal{D}}_n)/b(\varphi) $ tends to infinity, the bound is trivial ($S_1+S_2 \to 0$).

The nontrivial bound is obtained when $\Sigma$ under the alternative is close to the optimal matrix $\Sigma^* =( \sigma^*_{ij})_{1 \leq i, j \leq p}$, in the sense that  $\mathbb{E}_\Sigma(\widehat{\mathcal{D}}_n) = O(b(\varphi))$ together with the fact that $\varphi$ is close to the separation rate: $n^2pb^2(\varphi) \asymp 1$.
We apply Proposition~\ref{AN} to get the asymptotic normality
$$
n\sqrt{p} (\widehat{\mathcal{D}}_n-\mathbb{E}_{\Sigma} (\widehat{\mathcal{D}}_n)) \to \mathcal{N}(0,1).
$$
Thus,
\begin{eqnarray*}
\sup_{\Sigma \in Q(\alpha, L, \varphi))}\mathbb{P}_{\Sigma} (\widehat{\mathcal{D}}_n \leq t )
& \leq & \sup_{\Sigma \in Q(\alpha, L, \varphi))}\Phi(n\sqrt{p}\cdot (t - \mathbb{E}_{\Sigma} (\widehat{\mathcal{D}}_n  ))) +o(1)\\
& \leq & \Phi (n\sqrt{p} \cdot (t - \inf_{\Sigma \in Q(\alpha, L, \varphi))} \mathbb{E}_{\Sigma} (\widehat{\mathcal{D}}_n  ))) +o(1).
\end{eqnarray*}
At this point, choosing optimal weights translates into
\begin{eqnarray*}
\inf_{w_{ij}>0: \sum_{i\ne j}w^2_{ij}=1/2} && \sup_{\Sigma \in Q(\alpha, L, \varphi))}\mathbb{P}_{\Sigma} (\widehat{\mathcal{D}}_n \leq t )\\
&\leq &
\Phi (n\sqrt{p} \cdot (t - \sup_{w_{ij} > 0: \sum_{i\ne j}w^2_{ij}=1/2}\inf_{\Sigma \in Q(\alpha, L, \varphi))} \mathbb{E}_{\Sigma} (\widehat{\mathcal{D}}_n  ))) +o(1)\\
& \leq & \Phi(n\sqrt{p} \cdot (t -b(\varphi)))+o(1),
\end{eqnarray*}
after solving the optimization problem in the Appendix, which ends the proof of the Theorem.
\hfill \end{proof}


\begin{proof}[Proof of Theorems \ref{theo:borneinfO} and \ref{theo:borneinf}]
The first step of the proof is to reduce the set of parameters to a convenient parametric family. Let $\Sigma^*=[\sigma_{ij}^*]_{1 \leq i, j \leq p}$ be the matrix which has 1 on the diagonal and off-diagonal entries $\sigma_{ij}^*$ where
\begin{equation}
\label{sigma*}
 \sigma_{ij}^*= \sqrt{\lambda}\left( 1- (\ds\frac{|i-j|}{T})^{2\alpha}\right)_+^{\frac{1}{2}}
 \text{ ~~~ for }  i \neq j,
 \end{equation}
with $\lambda$  and $T$ are given by (\ref{weights}) and (\ref{constants}).

Let us define
$Q^*$  a subset of $Q(\alpha,L,\varphi)$ as follows
\[
Q^* = \{ \Sigma^*_U : [\Sigma^*_U]_{ij} = I(i=j)+u_{ij}\sigma_{ij}^*I(i \neq j) \text{ for all } 1 \leq i, j \leq p~,  ~U = [u_{ij}]_{1 \leq i ,\, j\leq p } \in \mathcal{U \}} ,
\]
where
\begin{equation*}
\mathcal{U} = \{U= [u_{ij}]_{1 \leq i ,\, j\leq p } :  u_{ii} =0 , \forall i \mbox{ and } \, u_{ij} = u_{j\, i}= \pm 1 \cdot I(|i-j|\leq T), \mbox{ for } i\neq j \}.
\end{equation*}
The cardinality of $\mathcal{U}$ is $p(T-1)/2$.
\begin{proposition}
\label{cor1}
 For $\alpha > 1/2$, the symmetric matrix $\Sigma^*_U= [u_{ij} \sigma^*_{ij} ]_{1 \leq i,j \leq p}$, with $\sigma_{ii}^* =1 $, for all $i$ from 1 to $p$, and $\sigma_{ij}^* $  defined in (\ref{sigma*}) is non-negative definite, for $\varphi>0$ small enough, and for all $U \in \mathcal{U}$.

Moreover,  denote by $\lambda_{1,U}, ..., \lambda_{p,U}$ the eigenvalues of $\Sigma^*_U$, then
$|\lambda_{i,U}-1| \leq O(1) \varphi^{1-1/(2\alpha)}$, for all $i$ from 1 to $p$.
\end{proposition}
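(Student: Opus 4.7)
The plan is to control the operator norm of the perturbation $A_U := \Sigma^*_U - I$ uniformly over $U \in \mathcal{U}$. Once $\|A_U\|_2 = O(\varphi^{1-1/(2\alpha)})$, both conclusions drop out at once: Weyl's inequality will yield the bound on $|\lambda_{i,U}-1|$, and since $\alpha>1/2$ makes this quantity $o(1)$ as $\varphi \to 0$, the smallest eigenvalue of $\Sigma^*_U$ will stay strictly positive for $\varphi$ small, giving non-negative (indeed positive) definiteness.

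First I would exploit the banded structure of $A_U$. The matrix is symmetric with zero diagonal, and $[A_U]_{ij}=u_{ij}\sigma^*_{ij}$ is supported on the band $0<|i-j|\leq T$. By Gershgorin's disc theorem (or the symmetric $\ell_\infty \to \ell_\infty$ bound),
$$
\|A_U\|_2 \;\leq\; \max_i \sum_{j\neq i}|u_{ij}\sigma^*_{ij}| \;\leq\; 2\sqrt{\lambda}\sum_{k=1}^{T}\Bigl(1-(k/T)^{2\alpha}\Bigr)_+^{1/2},
$$
where I have used $|u_{ij}|\leq 1$ together with the explicit form \eqref{sigma*} of $\sigma^*_{ij}$. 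Uniformity in $U\in\mathcal{U}$ is free of charge, since only absolute values enter the right-hand side.

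Next, a Riemann-sum comparison (the integrand $(1-x^{2\alpha})^{1/2}$ is decreasing on $[0,1]$) gives
$$
\sum_{k=1}^{T}\Bigl(1-(k/T)^{2\alpha}\Bigr)^{1/2} \;\leq\; T\int_0^1(1-x^{2\alpha})^{1/2}\,dx \;=\; O(T),
$$
with a constant depending only on $\alpha$. Plugging in the explicit values of $\lambda$ and $T$ from \eqref{weights}--\eqref{constants},
$$
\sqrt{\lambda}\,T \;\asymp\; \varphi^{(2\alpha+1)/(2\alpha)}\cdot\varphi^{-1/\alpha} \;=\; \varphi^{1-1/(2\alpha)},
$$
so $\|A_U\|_2 = O(\varphi^{1-1/(2\alpha)})$ uniformly in $U$.

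Finally I would invoke Weyl's inequality applied to $\Sigma^*_U = I + A_U$: for every eigenvalue, $|\lambda_{i,U}-1|\leq \|A_U\|_2 = O(\varphi^{1-1/(2\alpha)})$, which is the second claim. Because $1-1/(2\alpha)>0$ for $\alpha>1/2$, this upper bound is strictly less than $1$ for $\varphi$ small enough (uniformly in $i$ and $U$), so $\lambda_{i,U}>0$ and $\Sigma^*_U \succeq 0$. The argument is essentially a short bookkeeping of exponents; the only point that deserves a minute of care is ensuring that the estimate is uniform in $U$, which is automatic since Gershgorin's bound depends only on $|u_{ij}|\leq 1$ and on the common profile $\sigma^*_{ij}$.
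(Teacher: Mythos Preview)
Your proof is correct and follows essentially the same route as the paper: bound the off-diagonal row sums via Gershgorin and then read off the eigenvalue perturbation. The only cosmetic differences are that the paper applies Gershgorin directly to $\Sigma^*_U$ (diagonal $1$) rather than to $A_U$ followed by Weyl, and it bounds $\sum_{k=1}^T(1-(k/T)^{2\alpha})^{1/2}$ by Cauchy--Schwarz instead of a Riemann-sum comparison; both give the same $O(T)$.
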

We deduce that
\begin{equation}\label{sigma}
\|\Sigma^*_U \| \leq 1+ O(\varphi^{1-\frac 1{2\alpha}}) \mbox{ and }
\|\Sigma^*_U -I \| \leq O(\varphi^{1-\frac 1{2\alpha}}).
\end{equation}
Indeed,
$
\|\Sigma^*_U \| = \max_{i =1,...,p} \lambda_{i,U} \leq 1 +O(\varphi^{1+\frac 1{2 \alpha}})
$
and $\Sigma^*_U - I$ has eigenvalues $\lambda_{i,U}-1$.

Proposition~\ref{cor1} shows that for all $ \Sigma^*_U \in  Q^*$,  $\Sigma^*_U $ is non-negative definite, for $\varphi >0$ small enough.

Assume that $X_1, \dots,X_n \sim N(0, I)$ under the null hypothesis and denote by $P_I$ the likelihood of these random variables. We assume that $X_1, \dots,X_n \sim N(0, \Sigma^*_U)$, under the alternative, and we denote $P_U$ the associated likelihood. In addition let
\[
 P_{\pi} = \frac{1}{2^{p(T-1)/2}} \ds\sum_{ U \in \mathcal{U}} P_U
\]
 be the average likelihood  over $Q^*$.

The problem can be reduced to the test $H_0: X_1,...,X_n \sim P_I$ against the averaged distribution $H_1: X_1,...,X_n \sim P_\pi$, in the sense that
\begin{equation*}
\inf\limits_{ \Delta: \eta(\Delta) \leq w }  \beta ( \Delta(t) , Q(\alpha,L, \varphi)) \geq \inf\limits_{ \Delta: \eta(\Delta) \leq w }  \beta ( \Delta(t) , P_\pi ) + o(1)
\end{equation*}
and that
\begin{equation*}
\inf\limits_{ \Delta }  \gamma ( \Delta , Q(\alpha,L, \varphi)) \geq
\inf\limits_{ \Delta }  \gamma ( \Delta , P_\pi)+ o(1).
\end{equation*}
It is, therefore, sufficient to show that, when $u_n  \asymp 1$,
\begin{equation}\label{beta}
\inf\limits_{ \Delta: \eta(\Delta) \leq w }  \beta ( \Delta(t) , P_\pi ) \geq \Phi(n\sqrt{p} \cdot (t-b(\varphi))) +o(1)
\end{equation}
and that
\begin{equation}\label{gamma}
\inf\limits_{ \Delta }  \gamma ( \Delta , P_\pi) \geq 2 \Phi(-n \sqrt{p} \,\frac{b(\varphi)}2) + o(1).
\end{equation}
While, for $u_n = o(1) $, we need to show that
\begin{eqnarray}\label{gamma0}
\inf\limits_{ \Delta }  \gamma ( \Delta , P_\pi) \to 1.
\end{eqnarray}
In order to obtain (\ref{beta}) and (\ref{gamma}), we apply results in Section 4.3.1 of \cite{IngsterSuslina03} giving the sufficient condition that, in $P_I$ probability:
\begin{equation}\label{lan}
L_{n,p} := \log \frac{f_\pi}{f_I}(X_1,...,X_n) = u_n Z_n -\frac{u_n^2}2 + \xi,
\end{equation}
where $u_n = n \sqrt{p} b(\varphi) 
 \asymp 1$, $b(\varphi) = C^{\frac 12}(\alpha, L) \cdot \varphi^{2 + \frac 1{2 \alpha}}$,  $Z_n$ is asymptotically distributed as a standard Gaussian distribution and $\xi$ is a random variable which converges to zero under $P_I$ probability.
Moreover, to show \eqref{gamma0}, it suffices to show that 
\begin{equation}
\label{integrale}
\mathbb{E}_I \left( \ds\frac{dP_{\pi}}{dP_I} \right)^2 \leq 1 +o(1),
\end{equation}
since 
\[
\gamma \geq 1 - \frac 12 \| P_I -P_\pi \|_1  \text{ and } \| P_I -P_\pi \|_1^2 \leq \mathbb{E}_I \left( \ds\frac{dP_{\pi}}{dP_I} \right)^2 -1   .
\]
We first begin by  showing \eqref{integrale}, in order to finish the proof of Theorem \ref{theo:borneinfO}. 
Let,
\begin{eqnarray}
 H_{n,p} &:=& \mathbb{E}_{I}   \left( \ds\frac{dP_{\pi}}{dP_{I}} (X_1, \cdots , X_n ) \right)^2 \nonumber \\
 & =&  \mathbb{E}_{I}\mathbb{E}_{U,V} \left( \ds\frac{\exp \Big(- \frac{1}{2} \sum_{k=1}^n  X^\top_k   ((\Sigma_U)^{-1}+  (\Sigma_V)^{-1} - 2 I) X_k \Big) }{(2\pi)^{ \frac{np}2} \, \det^{ \frac n2}(\Sigma_U \Sigma_V)} \right). \label{chi2}
\end{eqnarray}
We have
\begin{eqnarray*}
H_{n,p}&=& \mathbb{E}_{U,V} \left( \ds\frac{ \det^{ - \frac n2} \Big( (\Sigma_U)^{-1}+  (\Sigma_V)^{-1} - I \Big)}{  \det^{ \frac n2}(\Sigma_U \Sigma_V)} \right) =  \mathbb{E}_{U,V} \Big( \text{det}^{ - \frac n2} \Big( \Sigma_U +  \Sigma_V - \Sigma_U   \Sigma_V  \Big) \Big)\nonumber .
 \end{eqnarray*}
We define  $\Delta_{U} = \Sigma_{U} - I$ and  note that $\Sigma_U + \Sigma_V - \Sigma_U \Sigma_V= I- \Delta_U \Delta_V$. As the matrix $\Delta_U \Delta_V$ is not necessarily symmetric, we write
\[
(I- \Delta_U \Delta_V) (I- \Delta_U \Delta_V)^\top= I-M
\]  
where $M=M_{U,V} := \Delta_{U} \Delta_{V} + \Delta_{V}\Delta_{U} - \Delta_{U} \Delta_{V}^2\Delta_{U} $ is  symmetric. Moreover, we prove that for all $U $ and $V \in \mathcal{U}$ the eigenvalues of $M$ are in $(-1, 1)$ for all $\alpha >1/2$ and $\varphi$ small enough. Indeed, by Gershgorin's theorem, for each eigenvalue $\lambda_M$ of $M$ 
there exists at least one  $i \in \{1, \ldots, p \}$ such that
\[
| \lambda_M - M_{ii} | \leq \sum_{j ; j \ne i } |M_{ij}|.
\]
We can show that $\sum_{j ; j \ne i } |M_{ij}| = O(\varphi^{2 - \frac{1}{\alpha}}) $ and $|M_{ii}| \leq O(\varphi^2 ) + O(\varphi^{4 - \frac 1\alpha})$.
Thus, 
\begin{eqnarray*}
H_{n,p} = \mathbb{E}_{U,V}( \text{det}^{- \frac n4}(I-M)) =  \mathbb{E}_{U,V} \exp \Big( - \ds\frac n4 \logdet(I-M) \Big)
\end{eqnarray*}
The Taylor expansion for the logdet of a symmetric matrix writes
\[
- \ds\frac 14 \logdet (I-M)= \frac 14 tr(M) + \frac 18 tr(M^2) + O(tr(M^3)).
\]
In more  details,
\begin{eqnarray*}
\ds\frac{1}{4} tr(M) &=& \frac 12 tr (\Delta_U \Delta_V) - \ds\frac{1}{4} tr(\Delta^{2}_U \Delta^{2}_V)\\
  \frac{1}{8}tr(M^2) &=& \ds\frac{1}{4} tr(\Delta_U \Delta_V)^2 + \ds\frac{1}{4} tr(\Delta^{2}_U \Delta^{2}_V) + \frac{1}{8}tr(\Delta_U \Delta^2_V \Delta_U)^2 \\ [0.2 cm]
&&- \ds\frac{1}{4}tr(\Delta_U \Delta^{2}_V \Delta^{2}_U \Delta_V ) - \ds\frac{1}{4} tr(\Delta_V \Delta^{2}_U \Delta^{2}_V \Delta_U )
\end{eqnarray*}
Recall that $\forall A, B \in \mathbb{R}^{p \times p}$  we have $\|AB\|_F \leq \|A\|_2 \|B\|_F$.  For all $U, V \in \mathcal{U}$, we use the last inequality and the Cauchy-Schwarz inequality to get
\[
 tr( \Delta_U\Delta^2_V \Delta^2_U\Delta_V) \leq \|\Delta_U \Delta_V\|_F \, \| \Delta_V \Delta^2_U\Delta_V \|_F \leq \| \Delta_V \|_2 \, \| \Delta_U \|_F  \, \| \Delta_V \Delta^2_U \|_2  \, \| \Delta_V \|_F \leq p \cdot \varphi^{6 - \frac{2}{\alpha}} ,
\]
\[
tr(\Delta_U \Delta^2_V \Delta_U)^2  = \| \Delta_U \Delta^2_V \Delta_U \|^2_F \leq \|  \Delta_U \Delta^2_V \|^2_2 \,\, \| \Delta_U  \|^2_F \, 
\leq  p \cdot \varphi^{8 - \frac{3}{\alpha}}. 
\]
Finally, using similar arguments we can show that
\[
tr(M^3_{U,V})= O( p \varphi^{6 -\frac 2{\alpha}}).
\]
Thus, 
\[
- \frac 14 \logdet ( I-M) =  \frac{1}{2} tr (\Delta_U \Delta_V) +  \ds\frac{1}{4}tr (\Delta_U \Delta_V)^2 +   O(p  \varphi^{6 - \frac{2}{\alpha}} ).
\]
Now we develop the terms on the right hand side of the previous equation. We obtain  
\begin{eqnarray*}
tr (\Delta_U \Delta_V) &=&  \sum_{ \substack{ 1 \leq i ,j \leq p \\ 1 < |i-j| <T} }  u_{ij} v_{ij} \cdot \sigma_{ij}^{*2} \nonumber = 2 \sum_{ \substack{ 1 \leq i <j \leq p \\ 1 < |i-j| <T} }  u_{ij} v_{ij} \cdot \sigma_{ij}^{*2} \nonumber
\end{eqnarray*}
and
\begin{eqnarray*}
tr  (\Delta_U \Delta_V)^2  &=&   \sum_{ \substack{ 1 \leq i, j, h, l \leq  p \\ 1 < |i-h| , |h-j|, |i-l| , |l-j| < T}}  u_{ih}v_{hj}  u_{jl} v_{li} \cdot \sigma^*_{ih} \sigma^*_{hj} \sigma^*_{jl}  \sigma^*_{li} \nonumber \\
&=&  \sum_{ \substack{ 1 \leq i,  l \leq  p \\ 1 <  |i-l| < T}}   \sigma_{ij}^{*4}  
+ 2 \sum_{ \substack{ 1 \leq i , j, l \leq  p \\ i < j \\ 1 < |i-l| , |l-j| < T}} u_{il} u_{jl} v_{lj} v_{li} \cdot \sigma_{il}^{*2} \sigma_{lj}^{*2}  \nonumber \\
& + &  4 \sum_{ \substack{ 1 \leq i, j, h, l \leq  p \\ i <j  , l < h \\ 1 < |i-h| , |h-j|, |i-l| , |l-j| < T}}  u_{ih} u_{jl} v_{hj} v_{li} \cdot \sigma^*_{ih} \sigma^*_{jl} \sigma^*_{hj} \sigma^*_{li}. 
\end{eqnarray*}
Now, we can write \eqref{chi2} as follows:
\begin{eqnarray*}
H_{n,p} &=& \mathbb{E}_{U,V}  \exp \Big( - \ds\frac{n \logdet(I- \Delta_U \Delta_V)}{2} \Big)   \nonumber \\
&=&   \mathbb{E}_{U,V}  \exp \Big(  n  \sum_{ \substack{ 1 \leq i < j \leq p \\ 1 < |i-j| <T} }  u_{ij} v_{ij} \cdot \sigma_{ij}^{*2} + \frac n2 \sum_{ \substack{ 1 \leq i , j, l \leq  p \\ i < j \\ 1 < |i-l| , |l-j| < T}} u_{il} u_{jl} v_{lj} v_{li} \cdot \sigma_{il}^{*2}   \sigma_{lj}^{*2}  \nonumber \\
&+&  n \sum_{ \substack{ 1 \leq i, j, h, l \leq  p \\ i < j , l < h \\ 1 < |i-h| , |h-j|, |i-l| , |l-j| < T}} \hspace{- 0.7cm} u_{ih} u_{jl} v_{hj} v_{li} \cdot \sigma^*_{ih} \sigma^*_{jl} \sigma^*_{hj} \sigma^*_{li}  \Big)  
+ \ds\frac n4 \sum_{ \substack{ 1 \leq i,  l \leq  p \\ 1 <  |i-l| < T}}   \sigma_{ij}^{*4}   + O(n p  \varphi^{6 - \frac{2}{\alpha}})  .
\end{eqnarray*}
We explicit the expected value  with respect to the i.i.d Rademacher random variables $\{ u_{ij} v_{ij}\}_{i<j}$, $\{  u_{il} u_{jl} v_{lj} v_{li}\}_{i<j, l \not \in \{i,j\}}$ and $\{u_{ih} u_{jl} v_{hj} v_{li}\}_{i<j,l<h}$ pairwise distinct and independent: 
\begin{eqnarray}
H_{n,p} &=& \ds\prod_{ \substack{ i < j \\ 1 < |i-j| <T }} \cosh (  n \sigma_{ij}^{*2} )
 \ds\prod_{ \substack{ 1 \leq i , j, l \leq  p \\ i < j \\ 1 < |i-l| , |l-j| < T}}   \cosh ( \frac n2  \sigma_{il}^{*2}   \sigma_{lj}^{*2}  )  \nonumber \\
 && \hspace{-1 cm} \prod_{ \substack{ 1 \leq i, j, h, l \leq  p \\ i < j , l < h \\ 1 < |i-h| , |h-j|, |i-l| , |l-j| < T} } \cosh \Big( n \sigma^{*}_{ih} \sigma^{*}_{jl} \sigma^{*}_{hj} \sigma^{*}_{li} \Big) \,\, \exp \Big(  \ds\frac n2  \sum_{ \substack{ 1 \leq i,  l \leq  p \\ 1 <  |i-l| < T}}  \sigma_{ij}^{*4} + O(n p  \varphi^{6 - \frac{2}{\alpha}})  \Big) . \nonumber 
 \end{eqnarray}
We use the inequality $\cosh(x) \leq \exp(x^2/2)$ and get
\begin{eqnarray*}
H_{n,p} &\leq &  \exp \Big\{ \ds\frac{n^2}2 \Big( \ds\sum_{\substack{ i < j \\ 1 < |i-j| <T }} \sigma_{ij}^{*4} + \frac 14 \sum_{ \substack{ 1 \leq i , j, l \leq  p \\ i < j \\ 1 < |i-l| , |l-j| < T}} \sigma_{il}^{*4} \sigma_{lj}^{*4}  +  \sum_{ \substack{ 1 \leq i, j, h, l \leq  p \\ i < j , l < h \\ 1 < |i-h| , |h-j|, |i-l| , |l-j| < T} } \sigma_{ih}^{*2} \sigma_{jl}^{*2} \sigma_{hj}^{*^2} \sigma_{li}^{*2} \Big) \Big\} \nonumber \\
 && \cdot  \exp \Big(  \frac n2  \sum_{ \substack{ 1 \leq i,  l \leq  p \\ 1 <  |i-l| < T}}  \sigma_{ij}^{*4} + O(n p  \varphi^{6 - \frac{2}{\alpha}})  \Big). \nonumber  \\
\end{eqnarray*}
Or, $\ds\frac{n^2}2  \ds\sum_{\substack{ i < j \\ 1 < |i-j| <T }} \sigma_{ij}^{*4} = n^2p b^2(\varphi) $  
and since $\varphi \to 0$ we have that
\begin{eqnarray*}
&& \frac{ n^2}8 \sum_{ \substack{ 1 \leq i , j, l \leq  p \\ i \ne j \\ 1 < |i-l| , |l-j| < T}} \sigma_{il}^{*4} \sigma_{lj}^{*4} = \frac{n^2}8 \ds\sum_{\substack{ i \ne l \\ 1 < |i-l| <T }} \sigma_{il}^{*4} \sum_{ \substack{ j \\ 1 < |l-j| < T}}  \sigma_{lj}^{*4} =  n^2p b^2(\varphi) \cdot O(\lambda^2 T) =n^2p b^2(\varphi) \cdot o(1)
\end{eqnarray*}
and 
\begin{eqnarray*}
\frac{n^2}2  \sum_{ \substack{ 1 \leq i, j, h, l \leq  p \\ i \ne j , l \ne h \\ 1 < |i-h| , |h-j|, |i-l| , |l-j| < T} } \sigma^{*2}_{ih} \sigma^{*2}_{jl} \sigma^{*2}_{hj} \sigma^{*2}_{li} = n^2  O(p \lambda^4 T^3) = O(n^2 p \varphi^{4 + \frac{1}{\alpha}} \cdot \varphi^4) = n^2p b^2(\varphi) \cdot o(1) .
\end{eqnarray*}
Finally, $np \varphi^{6 -\frac{2}{\alpha}} =  n^2 p \varphi^{4  + \frac{1}{\alpha}} \cdot \ds\frac{\varphi^{2 - \frac{3}{\alpha}}}{n} = o(1)$  as soon as $n^2 p \varphi^{4 + \frac{1}{\alpha}}\to 0$ and   $\alpha>3/2$ or  $ 5/8 < \alpha < 3/2$ and $ p < n^{\frac{8 \alpha -5}{-2\alpha +3}}$. 

As consequence, if $n^2 p b^2(\varphi) \to 0$ with the additional conditions on $\alpha, n $ and $p$ given previously, we get
\[
\mathbb{E}_I \left( \ds\frac{dP_{\pi}}{dP_I} \right)^2 \leq \exp \Big( n^2 pb^2(\varphi) (1 +o(1) ) \Big) = 1 +o(1),
\]
which ends the proof of Theorem~\ref{theo:borneinfO}.

Now, we show (\ref{lan}) in order to finish the proof of Theorem \ref{theo:borneinf}. More explicitly,
\begin{eqnarray}
L_{n,p} & = & \log \frac{f_\pi}{f_I}(X_1,...,X_n)  \nonumber \\
&=& \log \mathbb{E}_U \exp\left(
- \frac 12 \sum_{k=1}^n X_k^\top ((\Sigma^*_U)^{-1}-I) X_k - \frac n2 \log \det(\Sigma^*_U)\right),
\label{logliklihoodratio}
\end{eqnarray}
where $U$ is seen as a randomly chosen matrix with uniform distribution over the set $\mathcal{U}$.
Let us denote $\Delta_U = \Sigma^*_U - I$ and recall that proposition \ref{cor1} implies that $ \| \Delta_U \| \leq O(1) \varphi^{1 - \frac 1{2 \alpha}} = o(1)$ for all $\alpha > 1/2$. We write the following approximations obtained by matrix Taylor expansion:
\begin{eqnarray}
 - \ds\frac 12 ((\Sigma^*_U)^{-1} - I) &=& \ds\frac 12 \ds\sum_{l=1}^5 (-1)^{l+1} \cdot \Delta_U^l + O(1) \Delta_U^6 \label{taylor1} \\
\log \det(\Sigma^*_U) &=& tr \Big( \ds\sum_{l=1}^5 \frac{(-1)^{l+1}}l \cdot \Delta_U^l + O(1) \Delta_U^6 \Big)  \label{taylor2}
\end{eqnarray}
Note that, $tr(\Delta_U) = 0$ and that $tr(\Delta^2_U) = \| \Sigma^* - I \|_F^2 = 2 \ds\sum_{1 \leq i <j \leq p} \sigma_{ij}^{*2} $ does not depend on $U$. Moreover,
\begin{eqnarray}
\mathbb{E}_I (\sum_{k=1}^n X_k^\top \Delta_U^6 X_k )= n \cdot tr(\Delta_U^6) & \leq &  n \cdot  \| \Delta_U \|^4 \cdot  tr( \Delta_U^2) \leq O(1) \cdot n \varphi^{4 - \frac 2{\alpha}} \cdot p \varphi^{2} \nonumber \\
& \leq & O(1)\cdot n \ds\sqrt{p} \varphi^{ 2 + \frac 1{2 \alpha}} \cdot \ds\sqrt{p} \varphi^{4 - \frac 5{2 \alpha}} \nonumber \\
&\leq& O(1) \cdot u_n \cdot \ds\sqrt{p} \varphi^{ 2 - \frac 1{2 \alpha}} \cdot \varphi^{2 - \frac 2{\alpha}} =o(1) \nonumber
 \label{trdelta6}
\end{eqnarray}
for all $\alpha > 1 $ and when $u_n = O(1)$ and $ \ds\sqrt{p} \varphi^{2 - \frac 1{2 \alpha}} = O(1) $.
Also, for all $\alpha > 1$
\[
\Var_I (\sum_{k=1}^n X_k^\top \Delta_U^6 X_k )= 2 n tr( \Delta_U^{12}) \leq O(1) n \varphi^{10 - \frac 5{\alpha}} \cdot p \varphi^2 = o(1).
\]
In conclusion, we use $ Y_n = \mathbb{E}_I(Y_n) + O_P(\ds\sqrt{\Var(Y_n)} )$ for any sequence of random variables $Y_n$, to get
\[
\sum_{k=1}^n X_k^\top \Delta_U^6 X_k  - ntr(\Delta_U^6) = o_P(1), \quad \text{ in } P_I \text{-probability}.
\]
We get
\begin{eqnarray}
L_{n,p} = \log \mathbb{E}_U \exp & \Big( & \ds\frac 12 \sum_{k=1}^n X_k^\top \Delta_U X_k -  \ds\frac 12 \sum_{k=1}^n X_k^\top \Delta_U^2 X_k  + \frac n4 tr(\Delta_U^2) \label{ln1} \\
 & + &  \frac 12 \sum_{l=3}^5 (-1)^{l+1} \sum_{k=1}^n X_k^\top \Delta_U^l X_k - \ds\frac n2 \sum_{l=3}^5 \frac{(-1)^{l+1}}{l} \cdot tr( \Delta_U^l) \Big) + o_P(1) .\nonumber
\end{eqnarray}
From $l=3,4$ and 5, we treat similarly the terms
\begin{eqnarray}
 \ \ds\sum_{k=1}^n X_k^\top \Delta_U^l X_k &=& \mathbb{E}_I (\ds\sum_{k=1}^n X_k^\top \Delta_U^l X_k) +   O_P\Big( \ds\sqrt{\Var_I(\ds\sum_{k=1}^n X_k^\top \Delta_U^l X_k)} \Big) \nonumber \\
&=& n tr(\Delta_U^l) + O_P(1) \cdot \ds\sqrt{ n tr(\Delta_U^{2l})}
\end{eqnarray}
By \eqref{trdelta6}, we have $ntr(\Delta_U^6) =o(1)$, similarly we obtain $n tr(\Delta_U^{2l})= o(1) $ for $l=4$ and 5. Thus \eqref{ln1} becomes :
\begin{eqnarray}
L_{n,p} = \log \mathbb{E}_U \exp & \Big( & \ds\frac 12 \sum_{k=1}^n X_k^\top \Delta_U X_k -  \ds\frac 12 \sum_{k=1}^n X_k^\top \Delta_U^2 X_k  + \frac n4 tr(\Delta_U^2) \nonumber  \\
 & + &  \frac n2 \sum_{l=3}^5 (-1)^{l+1} \Big( 1 - \frac 1l \Big) \cdot tr( \Delta_U^l) \Big) + o_P(1) .\label{ln2}
\end{eqnarray}
We have
\[
tr(\Delta_U^3)= \ds\sum_{\substack{i \ne j \ne k \\ k \ne i}} u_{ij} u_{jk} u_{ki} \sigma_{ij}^*  \sigma_{jk}^* \sigma_{ki}^*  = 3! \ds\sum_{i < j < k } u_{ij} u_{jk} u_{ki} \sigma_{ij}^*  \sigma_{jk}^* \sigma_{ki}^*
\]
and we decompose
\begin{eqnarray}
tr(\Delta_U^4) &=& \ds\sum_{ \substack{i \ne j \ne k \\ i \ne l \ne k}} u_{ij} u_{jk} u_{kl} u_{li} \sigma_{ij}^*  \sigma_{jk}^* \sigma_{kl}^*  \sigma_{li}^* \nonumber \\
 & = & \sum_{i \ne j} \sigma_{ij}^{*4} + 2 \sum_{i \ne j \ne k}   \sigma_{ij}^{*2}  \sigma_{jk}^{*2} \nonumber  + 4! \sum_{i<j<k<l}   u_{ij} u_{jk} u_{kl} u_{li} \sigma_{ij}^*  \sigma_{jk}^* \sigma_{kl}^*  \sigma_{li}^* . \nonumber
\end{eqnarray}
Note that
\begin{eqnarray*}
&&  n   \sum_{i \ne j} \sigma_{ij}^{*4}  =  O(np \varphi^{4 + \frac{1}{ \alpha} })= O(n \ds\sqrt{p} \varphi^{2 + \frac{1}{ 2\alpha} } \cdot \ds\sqrt{p} \varphi^{2 + \frac{1}{ 2\alpha} }) = o(1) \\
&& 2 n \sum_{i \ne j \ne k}   \sigma_{ij}^{*2}  \sigma_{jk}^{*2}
= O(n p \lambda^2 T^2) = O(n \ds\sqrt{p} \varphi^{2 + \frac{1}{ 2\alpha} } \cdot \ds\sqrt{p} \varphi^{2 - \frac{1}{ 2\alpha} }) = O(u_n \cdot \sqrt{p} \varphi^{2 - \frac{1}{ 2\alpha} } ) = o(1),
\end{eqnarray*}
if $ u_n \asymp 1$  and $ \sqrt{p} \varphi^{2 - \frac{1}{ 2\alpha} }  \to 0$.
As for the last term :
\begin{eqnarray*}
tr(\Delta_U^5) &=&  \ds\sum_{ \substack{i \ne j \ne k \\ k \ne l \ne v \\ v \ne i}} u_{ij} u_{jk} u_{kl} u_{lv} u_{vi} \sigma_{ij}^*  \sigma_{jk}^* \sigma_{kl}^*  \sigma_{lv}^* \sigma_{vi}^* \\
& = & 5  \ds\sum_{ \substack{i \ne j \ne k \\ k \ne l \ne j}}  u_{jk} u_{kl} u_{lj}  \sigma_{ij}^{*2}  \sigma_{jk}^* \sigma_{kl}^*  \sigma_{lj}^*
+ 5  \ds\sum_{ \substack{i \ne j \ne k \\ k \ne i}} u_{ij}^3 u_{jk} u_{ki} \sigma_{ij}^{*3}  \sigma_{jk}^* \sigma_{ki}^* \\
&+& 5! \ds\sum_{i<j<k<l<v} u_{ij} u_{jk} u_{kl} u_{lv} u_{vi} \sigma_{ij}^*  \sigma_{jk}^* \sigma_{kl}^*  \sigma_{lv}^* \sigma_{vi}^* .
\end{eqnarray*}
The first two terms in the decomposition of $tr(\Delta_U^5)$ group with $tr(\Delta_U^3)$ with extra factor $\ds\sum_{j : |j-i|<T} \sigma_{ij}^{*2} + \sigma_{ij}^{*2} = O(\lambda \cdot T) + O(\lambda) = o(1)$, therefore we ignore these terms in further calculations.

Let us denote by $W_{ij} = \ds\sum_{k=1}^n X_{k,i} X_{k,j}
$, then
$
\ds\sum_{k=1}^n X_k^\top \Delta_U X_k = \sum_{1 \leq i \ne j \leq p } u_{ij} \sigma^*_{ij}, W_{ij} \, ,
$
\begin{eqnarray*}
\ds\sum_{k=1}^n X_k^\top \Delta_U^2 X_k = \sum_{1 \leq i,j \leq p} [ \Delta_U^2]_{ij} W_{ij} = \sum_{1 \leq i \ne j \leq p } \sum_{h \notin \{ i,j \}} u_{ih} u_{hj} \sigma_{ih}^* \sigma_{hj}^* W_{ij} + \sum_{i=1}^p \sum_{h \ne i} \sigma_{ih}^{*2} W_{ii}
\end{eqnarray*}
and $
\ds\frac n4 tr(\Delta_U^2)= \frac n4 \sum_{1 \leq i \ne h \leq p } \sigma^{*2}_{ih}$. Then, from \eqref{ln2} we get
\begin{eqnarray}
L_{n,p} &=& \log \mathbb{E}_U \exp  \Big(  \ds\frac 12 \sum_{1 \leq i \ne j \leq p } u_{ij} \sigma^*_{ij} W_{ij} - \frac 12 \sum_{1 \leq i \ne h \leq p} \sigma_{ih}^{*2} \Big( W_{ii} - \frac n2 \Big) \nonumber \\
&-& \ds\frac 12 \sum_{1 \leq i \ne j \ne h \ne i \leq p}   u_{ih} u_{hj} \sigma_{ih}^* \sigma_{hj}^* W_{ij}
+  \ds\frac n2 \sum_{l=3}^5 (-1)^{l+1} \cdot \ds\frac{l-1}{l} \cdot tr(\Delta_U^l) \Big) \nonumber  + o_P(1) \nonumber \\
& =&  \log \mathbb{E}_U \exp  \Big(   \sum_{1 \leq i < j \leq p } u_{ij} \sigma^*_{ij} W_{ij} -  \sum_{ \substack{ 1 \leq i \ne j \ne h \ne i \leq p \\ i <j}}  u_{ih} u_{hj} \sigma_{ih}^* \sigma_{hj}^*  W_{ij} \nonumber \\
&+& \ds\frac n2 \sum_{l=3}^5 (-1)^{l+1} \cdot \frac{(l-1)}{l}  \cdot l! \sum_{k_1 < k_2 < \cdots < k_l} u_{k_1 k_2} \cdots u_{k_l k_1} \sigma^*_{k_1 k_2} \cdots \sigma^*_{k_l k_1} \Big) \nonumber \\
& -& \ds\frac 12 \sum_{1 \leq i \ne h \leq p} \sigma_{ih}^{*2} \Big( W_{ii} - \frac n2 \Big) + o_P(1).
\end{eqnarray}
Now, we explicit the expected value with respect to the i.i.d Rademacher random variables $u_{ij}, u_{ih}u_{hj}, u_{k_1k_2} u_{k_2 k_3}u_{k_3 k_1}, \dots $ for all $i<j, h, k_1 < k_2 < \dots < k_l$ pairwise distinct.  Indeed, products of independent Rademacher random variables are still Rademacher and independent.
Thus,
\begin{eqnarray}
L_{n,p} &=& \ds\sum_{1 \leq i < j \leq p} \log \cosh( \sigma_{ij}^* W_{ij}) + \sum_{ \substack{1 \leq i< j \leq p \\ h \notin \{ i,j \}}}
\log\cosh( \sigma_{ih}^* \sigma_{hj}^* W_{ij} ) \nonumber \\
&+& \sum_{l=3}^5 \ds\sum_{k_1 < \dots < k_l} \log\cosh \Big( \ds\frac{n (-1)^{l+1}}{2} \cdot (l-1) \cdot (l-1)! \cdot \sigma^*_{k_1k_2} \cdots \sigma_{k_l k_1}^* \Big) \nonumber \\
&-& \ds\frac 12 \ds\sum_{1 \leq i \ne h \leq p} \sigma_{ih}^{*2}( W_{ii} - \ds\frac n2 ) + o_P(1). \label{ln3}
\end{eqnarray}
We shall use repeatedly the Taylor expansion of $\log\cosh (u) = u^2/2- (u^4/12)(1 +o(1))$ as $u \to 0$.
Indeed, $\mathbb{E}_I(W_{ij})=0$ and $\mathbb{E}_I( |\sigma^*_{ij} W_{ij}|^2) \leq O(1) \cdot \lambda  n = O(1) \cdot n^{- \frac 1{(4 \alpha+ 1)}} p^{- \frac{2 \alpha + 1}{(4 \alpha +1)}} =o(1)$, giving that $|\sigma^*_{ij} W_{ij}| = o_P(1)$. Thus
\begin{equation}
\label{mainterm}
\log \cosh(\sigma^*_{ij} W_{ij}) = \frac 12 (\sigma^*_{ij} W_{ij})^2
-\frac 1{12} (\sigma^*_{ij} W_{ij})^4 (1 +o_P(1)).
\end{equation}
 Similarly, using the first order Taylor expansion, we get
\begin{equation*}
\label{termenegligeable1}
\log\cosh ( \sigma^*_{ih} \sigma^*_{hj} W_{ij}) = \ds\frac 12 ( \sigma^{*}_{ih} \sigma^{*}_{hj} W_{ij})^2 (1 +o_P(1))
\end{equation*}
and for $l=3, 4$ 	and 5,
\[
\log\cosh \Big( \ds\frac{n(-1)^{l+1}}{2} \cdot (l-1) \cdot  (l-1)! \cdot \sigma^*_{k_1k_2} \cdots \sigma_{k_l k_1}^* \Big) = \ds\frac{n^2 \Big((l-1)\cdot (l-1)! \Big)^2}{8} \cdot \sigma^{*2}_{k_1k_2} \cdots \sigma_{k_l k_1}^{*2} (1+ o(1))
\]
Recall now that $\sigma_{ij}^*=0$, for all $i,j$ such that $|i-j| \geq T $ and $\sigma_{ij}^{*2} \leq \lambda = O(1) \varphi^{2 + \frac 1{ \alpha}}$. Then,
$$
\mathbb{E}\Big(\sum_{ \substack{1 \leq i < j \leq p \\ h \notin \{i, j \}}}    \sigma^{*2}_{ih} \sigma^{*2}_{hj} W^2_{ij}  \Big) = O(np \lambda^2 T^2 ) = O( n p \varphi^{4 }) = O(1) \cdot n \ds\sqrt{p} \varphi^{2 + \frac 1{2 \alpha}} \cdot \ds\sqrt{p} \varphi^{2 - \frac 1{2 \alpha}}
= o(1),
$$
as soon as  $u_n \asymp 1$ and $\ds\sqrt{p} \varphi^{2 - \frac 1{2 \alpha}} \to 0$.
In conclusion, as the convergence in $\mathbb{L}_1(P_I)$ implies convergence in $P_I$ probability, we get
\begin{equation}
\label{convenproba1}
\sum_{ \substack{1 \leq i < j \leq p \\ h \notin \{i, j \}}}   \log\cosh ( \sigma^*_{ih} \sigma^*_{hj} W_{ij})  \longrightarrow 0 ~ \text{ in } P_I \text{ probability}.
\end{equation}
 Moreover, for $l=3, 4 $ and 5,
\begin{eqnarray}
n^2 \ds\sum_{k_1 < \dots < k_l} \ds\frac{\Big( (l-1)\cdot (l-1)! \Big)^2}{8} \cdot \sigma^{*2}_{k_1k_2} \cdots \sigma_{k_l k_1}^{*2}  &=& O(n^2 p \lambda^l T^{l-1}) =O(n^2 p \varphi^{2l + \frac 1{\alpha}}) = o(1) . \nonumber \\ \label{termesnegligeable}
\end{eqnarray}
Using \eqref{convenproba1} and \eqref{termesnegligeable}, \eqref{ln3} gives
\begin{equation}
\label{last}
L_{n,p} = \frac 12 \sum_{1 \leq i < j \leq p} \!\! \sigma^{*2}_{ij} W^2_{ij}
-\frac 1{12}  \sum_{1 \leq i < j \leq p} \!\! \sigma^{*4}_{ij} W^4_{ij} (1 +o_P(1))
-  \frac 12 \sum_{1 \leq i < j \leq p} \!\! \sigma^{*2}_{ij}\Big( W_{ii} - \frac n2 \Big) + o_P(1)
\end{equation}
we further decompose as follows :
\[
\frac 12 \sum_{1 \leq i < j \leq p} \sigma^{*2}_{ij} W^2_{ij} = \frac 12 \sum_{k=1}^n\sum_{1 \leq i < j \leq p} \sigma^{*2}_{ij} X^2_{k,i}X^2_{k,j} + \frac 12 \sum_{1\leq k \ne l \leq n}\sum_{1 \leq i < j \leq p} \sigma^{*2}_{ij} X_{k,i}X_{k,j} X_{l,i}X_{l,j}
\]
With our definition: $\sigma^{*2}_{ij} =2 \cdot  w^*_{ij} \cdot  b(\varphi)$, we can write
$$
\frac 12 \sum_{1\leq k \ne l \leq n}\sum_{1 \leq i < j \leq p} \sigma^{*2}_{ij} X_{k,i}X_{k,j} X_{l,i}X_{l,j} = n\sqrt{p} b(\varphi) \cdot (n-1)\sqrt{p} \cdot \widehat{\mathcal{D}}_n
$$
and we put $Z_n = (n-1) \sqrt{p} \cdot \widehat{\mathcal{D}}_n$ which has asymptotically standard Gaussian under $P_I$ probability, by Proposition~\ref{prop:espvar}.

By Proposition \ref{prop:Wijproperty} given in the Appendix, we have $\mathbb{E}(W_{ij}^4) = 3n^2 (1 +o(1))$, then
\[
\frac 1{12} \cdot \mathbb{E}_I \Big( \sum_{1 \leq i<j \leq p} \sigma_{ij}^{*4} W_{ij}^4 \Big) = \frac 1{12} \cdot 3n^2  \sum_{1 \leq i<j \leq p} \sigma_{ij}^{*4} (1 +o(1))= \frac{u_n^2}{2} (1 +o(1)).
\]
Moreover,
\begin{eqnarray*}
\Var_I(\ds\sum_{i<j} \sigma_{ij}^{*4} W_{ij}^4) &=&  \ds\sum_{i<j} \sigma_{ij}^{*8} \Var_I( W_{ij}^4) + \underset{1 \leq i < j \ne j' \leq p}{\sum \sum \sum}  \sigma_{ij}^{*4}  \sigma_{ij'}^{*4} \Cov_I(W_{ij}^4, W_{ij'}^4) \\
& =&  O(n^4 p \lambda^4 T   ) +  O(n^3 p \lambda^4 T^2   )  \\
&= & O( n^4 p \varphi^{8 + \frac{3}{ \alpha}}  ) + O(n^3 p  \varphi^{8 + \frac{2}{ \alpha}} ) =o(1).
\end{eqnarray*}
We deduce  that,
\begin{eqnarray*}
 \frac 1{12} \sum_{1 \leq i < j \leq p} \sigma^{*4}_{ij} W^4_{ij}  =   \ds\frac{u_n^2}{2} +o_P(1)
\end{eqnarray*}
Remaining terms in (\ref{last}) can be grouped as follows:
\begin{eqnarray*}
&& \frac 14 \sum_{k=1}^n \sum_{1 \leq i \ne j \leq p} \sigma^{*2}_{ij} (X^2_{k,i} - 1)( X^2_{k,j} - 1) = o_P(1)
\end{eqnarray*}
since the random variable in the previous display is centered and
\begin{eqnarray*}
\mathbb{E}_I \Big( \sum_{k=1}^n \sum_{1 \leq i \ne j \leq p} \sigma^{*2}_{ij} (X^2_{k,i} - 1)( X^2_{k,j} - 1) \Big)^2
& =&  \sum_{k=1}^n \sum_{1 \leq i \ne j \leq p} \sigma^{*4}_{ij} \cdot  \mathbb{E}_I ( X^2_{k,i} - 1)^2  \mathbb{E}_I ( X^2_{k,j} - 1)^2  \\
&= & 4  n \sum_{1 \leq i < j \leq p} \sigma^{*4}_{ij} = O(n p b^2(\varphi)) = o(1),
\end{eqnarray*}
which concludes the proof of (\ref{lan}). 
\hfill \end{proof}

\begin{proof}[Proof of Theorem \ref{theo:adaptivity}]
The type I error probability tends to $0$ as a consequence of the Berry-Essen type inequality in Lemma 1 in the Appendix applied to the degenerate U-statistic $\widehat{\mathcal{D}}_{n,p}$. We have that, for some $\varepsilon \in (0, 1/2)$ and any $t>0$ :
\[
\Big| \mathbb{P}_I( \widehat{\mathcal{D}}_{n,r} \leq t ) - \Phi ( n \ds\sqrt{p} \cdot t ) \Big| \leq 16 \varepsilon^{1/2} \exp( - \frac { n^2p t^2}4) + O \Big( \ds\frac 1n \Big) + O \Big( \ds\frac 1{pT_r} \Big) \quad  \text{ for all } 1 \leq r \leq N.
\]
We use the relation $1- \Phi (u) \leq (1/u) \exp(-u^2/2)$ for all  $u \in \mathbb{R}$, to deduce that
\[
 \mathbb{P}_I(\widehat{\mathcal{D}}_{n,r} > x )  \leq  \Big( \ds\frac{1}{ n \ds\sqrt{p} \cdot x}  + 16 \varepsilon^{1/2}  \Big) \exp( - \frac { n^2p x^2}4) + O \Big( \ds\frac 1n \Big) + O \Big( \ds\frac 1{pT_r} \Big)
\]

We use this previous result to show  that the type I error probability tends to 0.  See that for all $r \in  \{1, \cdots, N \}$, $C_{\lambda_r} \geq c(\underline{\alpha}, \bar{\alpha})$, where  $c(\underline{\alpha}, \bar{\alpha}) = (2 \underline{\alpha} +1)/( 2 \bar{\alpha} (4\bar{\alpha}+1)^{\frac{1}{2 \underline{\alpha}}})$. Thus since
$ n \sqrt{p} \cdot t_r =  C_{\lambda_r} \ds\sqrt{\ln \ln(n \sqrt{p})} $, we obtain that $ n \sqrt{p} \cdot t_r \geq c(\underline{\alpha}, \bar{\alpha}) \ds\sqrt{\ln \ln(n \sqrt{p})}=:t$ for all $r \in \{1, \cdots, N \}.$ Recall that  $N= \lceil \ln(n\ds\sqrt{p}) \rceil $, therefore
\begin{eqnarray*}
 && \mathbb{P}_I( \Delta^*_{ad} =1) =  \mathbb{P}_I ( \exists r \in \{1, \dots, N \} ;  \widehat{\mathcal{D}}_{n,r} >  \mathcal{C}^* t_r) \leq \ds\sum_{r =1}^N \mathbb{P}_I(\widehat{\mathcal{D}}_{n,r} > \mathcal{C}^* t_r) \\
 & \leq & \ds\sum_{r =1}^N \Big\{ \Big(  \ds\frac{1}{n \ds\sqrt{p} \cdot \mathcal{C}^*  t_r }   + 16 \varepsilon^{1/2}  \Big)   \exp( - \frac { n^2p \cdot \mathcal{C}^{*2} t_r^2}4)+ O \Big( \ds\frac 1n \Big) + O \Big( \ds\frac 1{pT_r} \Big) \Big\}   \\
 & \leq & N \Big(  \ds\frac{1}{n \ds\sqrt{p} \cdot  c(\underline{\alpha}, \bar{\alpha})\mathcal{C}^* t }   + 16 \varepsilon^{1/2}  \Big)   \exp( - \frac { n^2p \cdot c^2(\underline{\alpha}, \bar{\alpha}) \mathcal{C}^{*2} t^2}4) \nonumber
 +  O \Big( \ds\frac Nn \Big) + O \Big( \ds\frac 1p \Big)  \ds\sum_{r =1}^N \ds\frac{1}{T_r}\\
 & \leq &  \Big(  \ds\frac{1}{c(\underline{\alpha}, \bar{\alpha}) \mathcal{C}^* \ds\sqrt{\ln \ln(n \ds\sqrt{p})} }   + 16 \varepsilon^{1/2}  \Big)   (\ln(n\ds\sqrt{p}))^{1 -  (c(\underline{\alpha}, \bar{\alpha})\mathcal{C}^{*}/2)^2}  \nonumber
 +  \ds\frac {O(\ln (n \ds\sqrt{p})}n  +  \ds\frac {O(1)}p  \ds\sum_{r =1}^N \ds\frac{1}{T_r}.
 \end{eqnarray*}
 See that :
 \[
  \ds\frac{1}{p}  \ds\sum_{r =1}^N \ds\frac{1}{T_r} =  \ds\frac{1}{p} \ds\sum_{r =1}^N  ( \psi_{\alpha_r})^{\frac{1}{\alpha_r}} = \ds\frac{1}{p} \sum_{r=1}^N \ds\frac 1p \Big( \ds\frac{\rho_{n,p}}{n\ds\sqrt{p}} \Big)^{\frac 2{4 \alpha_r +1}} \leq \ds\frac{N}{p} \Big( \ds\frac{\rho_{n,p}}{n\ds\sqrt{p}} \Big)^{\frac 2{4 \bar{\alpha} +1}} = o(1).
 \]
Moreover  if $ (\ln p) /n =o(1) $, then $\ln (n \sqrt{p})/n = o(1)$,  and if $\mathcal{C}^* \geq 2/ c( \underline{\alpha} , \bar{\alpha})$ we obtain
\[
 \mathbb{P}_I( \Delta^*_{ad} =1) =o(1).
\]

Now, we move to the type II error probability.
Let us consider $\Sigma \in \mathcal{F}(\alpha,L) $ such that $(1/2p) \| \Sigma -I\|_F^2 = (1/p) \sum_{i<j} \sigma_{ij}^2 \geq (\mathcal{C} \psi_{\alpha})^2$ for some  $\alpha \in \mathcal{A}$. We defined  $\alpha_{r_0}$ as the smallest point on the grid such that $ \alpha \leq \alpha_{r_0}$. We denote by  $\widehat{\mathcal{D}}_{n,r_0}$, $t_{r_0}$, $\lambda_{r_0}$, $b_{r_0}$ and $T_{r_0}$ the test statistic, the threshold and the parameters depending on $\alpha_{r_0}$.
Also we define $C_{T_{r_0}}$, $C_{\lambda_{r_0}}$ and $C_{b_{r_0}}$ the constants define in \eqref{constants} for $\alpha_{r_0}$ instead of $\alpha$ and $L=1$. We  have $C_{b_{r}} < 1$ and $C_{T_r} > 1$, for all $r \in \{1, \cdots N \}$. The type II error probability is bounded from above as follows, $\forall \alpha \in [\underline{\alpha}, \bar{\alpha}]$ and $\forall \Sigma \in Q(\alpha,L, \mathcal{C} \psi_\alpha)$ :
\begin{eqnarray*}
\mathbb{P}_{\Sigma}( \Delta^*_{ad} =0) &= & \mathbb{P}_{\Sigma} \Big( \forall 1 \leq r \leq N,\widehat{\mathcal{D}}_{n,r} \leq \mathcal{C}^* t_r \Big) \leq  \mathbb{P}_{\Sigma} \Big( \widehat{\mathcal{D}}_{n,r_0} \leq  \mathcal{C}^*  t_{r_0} \Big) \\
& \leq &  \mathbb{P}_{\Sigma} \Big( \mathbb{E}_{\Sigma}(\widehat{\mathcal{D}}_{n,r_0}) - \widehat{\mathcal{D}}_{n,r_0}   \geq \mathbb{E}_{\Sigma}(\widehat{\mathcal{D}}_{n,r_0}) -     \mathcal{C}^* t_{r_0} \Big)
\end{eqnarray*}
First we have
 \begin{eqnarray*}
 \mathbb{E}_{\Sigma}(\widehat{\mathcal{D}}_{n,r_0}) & = & \ds\frac 1p \sum_{i<j} w_{ij, r_0}^* \sigma_{ij}^2  =  \ds\frac 1p  \cdot \ds\frac{\lambda_{r_0}}{b_{r_0}} \ds\sum_{i<j} \Big( 1 - \Big(\ds\frac{|i-j|}{T_{r_0}} \Big)^{2 \alpha_{r_0}} \Big)_+ \sigma_{ij}^2  \\
 & \geq & \ds\frac 1p  \cdot \ds\frac{\lambda_{r_0} }{b_{r_0} }  \Big( \ds\sum_{i<j} \sigma_{ij}^2 -  \ds\sum_{\substack{i<j \\  |i-j| > T_{r_0}}} \sigma_{ij}^2 - \ds\sum_{ \substack{i<j \\ | i-j | < T_{r_0}}} \ds\frac{|i-j|^{2 \alpha_{r_0}} }{T_{r_0}^{2 \alpha_{r_0}}} \cdot \sigma_{ij}^2 \Big)  \\
&\geq & \ds\frac{\lambda_{r_0} }{b_{r_0} } \Big( \mathcal{C}^2 \cdot \psi_{\alpha}^2 - \ds\sum_{ i<j } \ds\frac{|i-j|^{2 \alpha} }{T_{r_0}^{2 \alpha}}  \cdot \sigma_{ij}^2  \Big)  \geq  \ds\frac{\lambda_{r_0} }{b_{r_0}} \Big( \mathcal{C}^2 \cdot \psi_{\alpha}^2 - L \cdot T_{r_0}^{-2\alpha} \Big) \\
&\geq & C_{\lambda_{r_{\mathsmaller{0}}}} \cdot( C_{b_{r_0}})^{-\frac 12}  (\psi_{\alpha_{r_0}})^{\frac 1{2 \alpha_{r_0}}} \Big( \mathcal{C}^2 \cdot \psi_{\alpha}^2  - (C_{T_{r_0}})^{-2\alpha} \cdot (\psi_{ \alpha_{r_0}})^{\frac {4 \alpha }{2 \alpha_{r_0}}} \Big) \\
&\geq & C_{\lambda_{r_{\mathsmaller{0}}}}  \Big( \mathcal{C} \cdot (\psi_{ \alpha_{r_0}})^{\frac 1{2 \alpha_{r_0}}} \cdot \psi_{\alpha}^2  - (C_{T_{r_0}})^{-2\alpha} \cdot (\psi_{\alpha_{r_0}})^{\frac {4 \alpha +1}{2 \alpha_{r_0}}} \Big)
= : (E_1 - E_2).
 \end{eqnarray*}
Now we show that, since $\alpha < \alpha_{r_0}$ we have,
 \begin{eqnarray*}
E_1 \cdot  t_{r_0}^{-1} & =& \mathcal{C} \cdot (\psi_{\alpha_{r_0}})^{\frac 1{2 \alpha_{r_0}}} \cdot  (\psi_{\alpha})^{2} \cdot  ((n \sqrt{p}) / \rho_{n,p}) \nonumber \\
&=&  ((n \sqrt{p}) / \rho_{n,p})^{\frac{4(\alpha_{r_0} -\alpha )}{(4\alpha_{r_0} +1)(4\alpha +1)}}  \geq  \mathcal{C}
 \end{eqnarray*}
Moreover, use that $ 0> \alpha - \alpha_{r_0} \geq - (\bar{\alpha} - \underline{\alpha})/ \ln(n \ds\sqrt{p})  $, to obtain
\begin{eqnarray}
t_{r_0} \cdot E_2^{-1} & =&
 (\rho_{n,p}/ (n \sqrt{p}) ) \cdot (C_{T_{r_0}})^{2\alpha} \cdot (\psi_{ \alpha_{r_0}})^{-\frac {4 \alpha +1}{2 \alpha_{r_0}}}
 \geq    ((n \sqrt{p}) / \rho_{n,p})^{ \frac{4 (\alpha - \alpha_{r_0} )}{(4 \alpha_{r_0} +1)}} \nonumber \\[0.4 cm]
& = &   \exp \Big\{ \frac{4 (\alpha - \alpha_{r_0} )}{4 \alpha_{r_0} + 1} \cdot \ln ((n \sqrt{p}) / \rho_{n,p}) \Big\} \nonumber \\
&  \geq &    \exp \Big\{ - \frac{ 4 (\bar{\alpha} - \underline{\alpha})}{4 \underline{\alpha} + 1} (1 +o(1)) \Big\} \geq C( \underline{\alpha}, \bar{\alpha} ) .\nonumber
\end{eqnarray}
We deduce that,
 \[
 \mathbb{E}_{\Sigma}(\widehat{\mathcal{D}}_{n,r_0}) \geq  \Big( \mathcal{C}  - \ds\frac{1}{ C( \underline{\alpha}, \bar{\alpha} )} \Big) \cdot t_{r_0}.
 \]
 Let us denote by $\mathcal{T}_1$ and $\mathcal{T}_2$ the right-hand side termes in (\ref{T1}) and (\ref{T2}), respectively.
Then  by Markov inequality, for $\mathcal{C} -\ds\frac{1}{ C( \underline{\alpha}, \bar{\alpha} )} - \mathcal{C}^* >0 $, we get
 \begin{eqnarray*}
\mathbb{P}_{\Sigma}( \Delta^*_{ad} =0)  &\leq &  \mathbb{P}_{\Sigma} \Big( |\widehat{\mathcal{D}}_{n,r_0} - \mathbb{E}_{\Sigma}(\widehat{\mathcal{D}}_{n,r_0}) |     \geq \mathbb{E}_{\Sigma}(\widehat{\mathcal{D}}_{n,r_0}) -     \mathcal{C}^* t_{r_0} \Big)    \\[0.3 cm]
& \leq & \ds\frac{\Var_\Sigma(\widehat{\mathcal{D}}_{n, r_0} )}{ \Big( \mathbb{E}_{\Sigma}(\widehat{\mathcal{D}}_{n,r_0}) -   \mathcal{C}^*  t_{r_0} \Big)^2 }
 \leq   \ds\frac{\Big( \mathcal{C} - \ds\frac{1}{ C( \underline{\alpha}, \bar{\alpha} )}  \Big)^2 \Var_\Sigma(\widehat{\mathcal{D}}_{n, r_0} )}{  \Big( \mathcal{C} - \ds\frac{1}{ C( \underline{\alpha}, \bar{\alpha} )}  -  \mathcal{C}^* \Big)^2 \mathbb{E}^2_{\Sigma}(\widehat{\mathcal{D}}_{n,r_0}) } \\[0.3 cm]
& \leq & \ds\frac{\Big( \mathcal{C} - \ds\frac{1}{ C( \underline{\alpha}, \bar{\alpha} )}  \Big)^2 \cdot (\mathcal{T}_1 +(n-1)\mathcal{T}_2)}{n(n-1)p^2    \Big( \mathcal{C} - \ds\frac{1}{ C( \underline{\alpha}, \bar{\alpha} )}  -  \mathcal{C}^* \Big)^2 \mathbb{E}^2_{\Sigma}(\widehat{\mathcal{D}}_{n,r_0})}
 :=  F_1 + F_2.
\end{eqnarray*}
We use \eqref{T_1} to show that $F_1$ tends to zero.
\begin{eqnarray*}
F_1 &:=& \ds\frac{\Big( \mathcal{C} - \ds\frac{1}{ C( \underline{\alpha}, \bar{\alpha} )}  \Big)^2 \cdot \mathcal{T}_1 }{n(n-1)p^2    \Big( \mathcal{C} - \ds\frac{1}{ C( \underline{\alpha}, \bar{\alpha} )}  -  \mathcal{C}^* \Big)^2 \mathbb{E}^2_{\Sigma}(\widehat{\mathcal{D}}_{n,r_0})} \\
& \leq  & \ds\frac{1 +o(1)}{n(n-1)p  \Big( \mathcal{C} - \ds\frac{1}{ C( \underline{\alpha}, \bar{\alpha} )}  -  \mathcal{C}^* \Big)^2 t_{r_0}^2 } + \ds\frac{O(T_{r_0}^{\frac 32} \cdot t_{r_0}) }{n(n-1)p t_{r_0}^2}  = O(\rho_{n,p}^{-2}) =o(1),
\end{eqnarray*}
since $T_{r_0}^{\frac 32} \cdot   t_{r_0} = O \Big( ( \rho_{n,p} / n \sqrt{p})^{ - \frac 3{4\alpha_{r_0} +1} +1} \Big) =o(1) $  for $\alpha_{r_0} > 1/2$.
Similarly we use \eqref{T_2} to show that
\[
F_2:=\ds\frac{\Big( \mathcal{C} - \ds\frac{1}{ C( \underline{\alpha}, \bar{\alpha} )}  \Big)^2 \cdot \mathcal{T}_2}{np^2    \Big( \mathcal{C} - \ds\frac{1}{ C( \underline{\alpha}, \bar{\alpha} )}  -  \mathcal{C}^* \Big)^2 \mathbb{E}^2_{\Sigma}(\widehat{\mathcal{D}}_{n,r_0})}  = o(1).
\]
 Thus we get, for $\mathcal{C} -\ds\frac{1}{ C( \underline{\alpha}, \bar{\alpha} )} - \mathcal{C}^* >0 $,
\[
\sup\limits_{ \alpha \in [\,  \underline{\alpha} \, , \bar{\alpha} \,  ]} \sup\limits_{ \substack{   \Sigma \in \mathcal{F}(\alpha,L) \,  ; \\  \frac 1{2p} \| \Sigma -I\|_F^2  \geq \mathcal{C}^2 \psi^2_{ \alpha}}} \mathbb{P}_{\Sigma}( \Delta^*_{ad} =0) =o(1).
\]
\hfill
 \end{proof}




\section{Appendix}

\begin{proof}[Proof of Proposition \ref{prop:espvar}]
We recall that under the null hypothesis the coordinates of the vector $X_k$ are independent, so using this fact we have :

$$
\begin{array}{ll}
\Var_I(\widehat{\mathcal{D}}_n)
   &= \displaystyle\frac{2}{n^2(n-1)^2p^2} \Var( \underset{i<j}{ \ds\sum_{i=1}^p \sum_{j=1}^p}  \underset{k \neq l}{\ds\sum_{l=1}^n\sum_{k=1}^n} w^*_{ij}
   X_{k,i}X_{k,j}X_{l,i}X_{l,j})\\
&= \displaystyle\frac{2}{n(n-1)p^2} \underset{i<j}{ \ds\sum_{i=1}^p \sum_{j=1}^p}w_{ij}^{*2} \mathbb{E}^4(X_{1,i}^2)
   = \ds\frac{2}{n(n-1)p^2}\underset{i<j}{ \ds\sum_{i=1}^p \sum_{j=1}^p} w_{ij}^{*2}
  =  \ds\frac{1}{n(n-1)p}
  \end{array}
$$
For  $\Sigma \in Q(\alpha , L , \varphi)$,
$$
\begin{array}{ll}
\mathbb{E}_{\Sigma}(\widehat{\mathcal{D}}_n )
     &= \ds\frac{1}{n(n-1)p}  \underset{i<j}{ \ds\sum_{i=1}^p \sum_{j=1}^p}  \underset{k \neq l}{\ds\sum_{l=1}^n\sum_{k=1}^n} w^*_{ij} \mathbb{E}(X_{k,i}X_{k,j}X_{l,i}X_{l,j})
     \\[0.5 cm]
     &= \ds\frac{1}{p} \underset{i<j}{ \ds\sum_{i=1}^p \sum_{j=1}^p} w^*_{ij}\mathbb{E}(X_{1,i}X_{1,j})\mathbb{E}(X_{2,i}X_{2,j})
     =  \ds\frac{1}{p} \underset{i<j}{ \ds\sum_{i=1}^p \sum_{j=1}^p} w^*_{ij} \sigma_{ij}^2 \\[0.5 cm]
\end{array}
$$
Remark that $\widehat{\mathcal{D}}_n - \mathbb{E}_{\Sigma}(\widehat{\mathcal{D}}_n)$ can be written as the following form
\begin{eqnarray}\label{decomp}
\widehat{\mathcal{D}}_n - \mathbb{E}_{\Sigma}(\widehat{\mathcal{D}}_n)&=& \ds\frac{1}{n(n-1)p} \underset{k \neq l}{\ds\sum_{l=1}^n\sum_{k=1}^n}\underset{i<j}{ \ds\sum_{i=1}^p \sum_{j=1}^p} w^*_{ij}(X_{k,i}X_{k,j} - \sigma_{ij})
(X_{l,i}X_{l,j} - \sigma_{ij}) \nonumber \\
 &&+ \ds\frac{2}{np} \sum_{k=1}^n  \underset{i<j}{ \ds\sum_{i=1}^p \sum_{j=1}^p} w^*_{ij}
 (X_{k,i}X_{k,j} - \sigma_{ij})\sigma_{ij}
 \end{eqnarray}
Then the  variance of the estimator $\widehat{\mathcal{D}}_n $  is a sum of two uncorrelated terms
\begin{equation}
\label{var2termes}
\begin{array}{lcl}
\Var_{\Sigma}(\widehat{\mathcal{D}}_n)
&=& \ds\frac{2}{n(n-1)p^2} \mathbb{E}_{\Sigma}
 \{\underset{i<j}{ \ds\sum_{i=1}^p \sum_{j=1}^p}  w^*_{ij}(X_{1,i}X_{1,j} - \sigma_{ij})
(X_{2,i}X_{2,j} - \sigma_{ij})\}^2 \\
&&+  ~ \ds\frac{4}{np^2} \mathbb{E}_{\Sigma} \{\underset{i<j}{ \ds\sum_{i=1}^p \sum_{j=1}^p} w^*_{ij}
 (X_{k,i}X_{k,j} - \sigma_{ij})\sigma_{ij}\}^2
 \end{array}
\end{equation}
Now we will give an upper bound for the first term on the right-hand side of (\ref{var2termes}).
Denote by
\[
\begin{array}{lcl}
T_1 &=& 2\mathbb{E}_{\Sigma}
 \{\underset{i<j}{ \ds\sum_{i=1}^p \sum_{j=1}^p}  w^*_{ij}(X_{1,i}X_{1,j} - \sigma_{ij})
(X_{2,i}X_{2,j} - \sigma_{ij})\}^2  \\
 &=&\ds\frac{1}{2}\underset{i \neq j}{ \ds\sum_{i=1}^p \sum_{j=1}^p}\underset{i' \neq j'}{ \ds\sum_{i'=1}^p \sum_{j'=1}^p} w^*_{ij} w^*_{i'j'}\mathbb{E}_{\Sigma}^2\{ (X_{1,i}X_{1,j} - \sigma_{ij})(X_{1,i'}X_{1,j'} - \sigma_{i'j'}) \} \\
  &=&\ds\frac{1}{2}\underset{i \neq j}{ \ds\sum_{i=1}^p \sum_{j=1}^p}\underset{i' \neq j'}{ \ds\sum_{i'=1}^p \sum_{j'=1}^p} w^*_{ij} w^*_{i'j'}
  ( \sigma_{ii'}\sigma_{jj'} + \sigma_{ij'}\sigma_{i'j})^2
\end{array}
\]
We shall distinguish three terms in the previous sum, that is $(i,j,i',j') \in A_1 \cup A_2 \cup A_3,$ where  $A_1, A_2,A_3$ form a partition of the set$ \{(i,j,i',j') \in \{ 1, \dots, p\}^4 \text{ such that } i \neq j, i' \neq j' \}$.  More precisely in $A_1$ we have $(i,j)=(i',j')$ or $(i,j)=(j',i')$, in $A_2$ we have three different indices
 $(i=i' \text{ and } j \neq j')$ or $(j=j' \text{ and } i \neq i')$ or $(i=j'\text{ and } j \neq i')$ or $(j=i' \text{ and } i \neq j') $ and finally in $A_3$ the indices are pairewise distinct.
First, when $(i,j,i',j') \in A_1$, we use that $\Var_{\Sigma}(X_{1,i}X_{1,j}) = (1 + \sigma_{ij}^2)^2$, to get
\begin{eqnarray}
T_{1,1} &=& \underset{i \neq j}{ \ds\sum_{i=1}^p \sum_{j=1}^p}
 w^{*2}_{ij}(1 + \sigma_{ij}^2)^2 = \underset{i \neq j}{ \ds\sum_{i=1}^p \sum_{j=1}^p}  w^{*2}_{ij} + \underset{i \neq j}{ \ds\sum_{i=1}^p \sum_{j=1}^p}w^{*2}_{ij} (2 \sigma_{ij}^2 + \sigma_{ij}^4 ) \nonumber \\
&\leq& p + 3\underset{i \neq j}{ \ds\sum_{i=1}^p \sum_{j=1}^p} w^{*2}_{ij}  \sigma_{ij}^2 \leq p + 6\cdot  p \cdot L \cdot \sup\limits_{i,j} w^{*2}_{ij}  \label{T_{1,1}}
\end{eqnarray}
and this is $p(1+o(1))$ since $ \sup\limits_{i,j} w^{*2}_{ij} \asymp(1/T ) \to 0.$
When the indices are in $A_2$, we have three indices out of four which are equal. We assume $i=i'$, therefore it is sufficient to check that,
\[
\begin{array}{lcl}
T_{1,2} &=& 2\ds\sum_{i=1}^p \underset{j \neq j'}{\sum_{\substack{j=1 \\ j \neq i}}^p\sum_{\substack{j'=1 \\ j' \neq i}}^p} w^{*}_{ij} w^{*}_{ij'} ( \sigma_{jj'} + \sigma_{ij}\sigma_{ij'})^2  \\
&\leq& 4 \ds\sum_{i=1}^p \underset{j \neq j'}{\ds\sum_{\substack{j=1 \\ j \neq i}}^p\sum_{\substack{j'=1 \\ j' \neq i}}^p} w^{*}_{ij} w^{*}_{ij'} \sigma_{jj'}^2 + 4 \ds\sum_{i=1}^p\underset{j \neq j'}{\ds\sum_{\substack{j=1 \\ j \neq i}}^p\sum_{\substack{j'=1 \\ j' \neq i}}^p} w^{*}_{ij} w^{*}_{ij'} \sigma_{ij}^2\sigma_{ij'}^2 \\
\end{array}
\]
Now let us  bound from above the first term of $T_{1,2} $,
\begin{equation}
\label{T_{1,2,1}}
\begin{array}{lcl}
T_{1,2,1} := \ds\sum_{i=1}^p \underset{ j \neq j'}{\ds\sum_{\substack{j=1 \\ j \neq i}}^p\sum_{\substack{j'=1 \\ j' \neq i}}^p} w^{*}_{ij} w^{*}_{ij'} \sigma_{jj'}^2 &\leq &  \ds\sum_{i=1}^p \underset{|j -j'| <T}{\ds\sum_{\substack{j=1 \\ j \neq i}}^p\sum_{\substack{j'=1 \\ j' \neq i}}^p} w^{*}_{ij} w^{*}_{ij'} \sigma_{jj'}^2  +  \underset{ |j- j'|  \geq T}{\ds\sum_{\substack{j=1 \\ j \neq i}}^p\sum_{\substack{j'=1 \\ j' \neq i}}^p}
\ds\frac{|j- j'|^{2 \alpha}}{T^{2 \alpha}} \sigma_{jj'}^2 \ds\sum_{i=1}^p w^{*}_{ij} w^{*}_{ij'} \\[0.5cm]
\end{array}
\end{equation}
Again we will treat each term  of $T_{1,2,1}$ separately. We recall that the weights $w_{ij}^*$  verify the following properties
\[
 ( w_{ij}^* \geq w_{i'j'}^*  \quad \text{for } | i-j| \leq |i'-j'| )~ \quad \mbox{ and } \quad  \sum_{i=1}^p w_{ij}^* \asymp \ds\sqrt{T}.
\]
In the rest of the proof we denote by $ k_0( \alpha , L) , k_1( \alpha , L) , \ldots$ different constants that dependent only  on $ \alpha$ and/or on $L$. We have for $ \alpha > 1/2$,
\begin{eqnarray}
T_{1,2,1,1} &:=& \ds\sum_{i=1}^p \underset{|j -j'| <T}{\ds\sum_{\substack{j=1 \\ j \neq i}}^p \sum_{\substack{j'=1 \\ j' \neq i}}^p} w^{*}_{ij} w^{*}_{ij'} \sigma_{jj'}^2 = \ds\sum_{i=1}^p \underset{|j -j'| \leq |i-j |<T}{\ds\sum_{\substack{j=1 \\ j \neq i}}^p\sum_{\substack{j'=1 \\ j' \neq i}}^p} w^{*}_{ij} w^{*}_{ij'} \sigma_{jj'}^2  + \ds\sum_{i=1}^p \underset{ |i-j |<|j -j'| <T}{\ds\sum_{\substack{j=1 \\ j \neq i}}^p\sum_{\substack{j'=1 \\ j' \neq i}}^p} w^{*}_{ij} w^{*}_{ij'} \sigma_{jj'}^2  \nonumber \\
& \leq &  \underset{|j -j'| <|i-j |<T}{\ds\sum_{\substack{j=1 \\ j \neq i}}^p\sum_{\substack{j'=1 \\ j' \neq i}}^p}  w^{*}_{jj'} \sigma_{jj'}^2   \ds\sum_{i=1}^p w^{*}_{ij'} + \ds\sum_{i=1}^p \underset{ |i-j |<|j -j'| <T}{\ds\sum_{\substack{j=1 \\ j \neq i}}^p\sum_{\substack{j'=1 \\ j' \neq i}}^p} \frac{w^{*}_{ij} w^{*}_{ij'} |j -j'|^{2 \alpha}}{|i-j |^{2 \alpha}} \sigma_{jj'}^2   \nonumber \\
& \leq& k_0(\alpha, L) \cdot \ds\sqrt{T} \cdot p \cdot  \mathbb{E}_{\Sigma}(\widehat{\mathcal{D}}_n) + (\sup\limits_{i,j} w_{ij}^*)^2 \sum_{j=1}^p \sum_{\substack{j'=1 \\ j\neq j'}}^p |j -j'|^{2 \alpha}\sigma_{jj'}^2 (\sum_{i=1}^p \frac{1}{|i-j |^{2 \alpha}})   \nonumber \\
& \leq &k_0(\alpha, L) \cdot \ds\sqrt{T} \cdot p \cdot  \mathbb{E}_{\Sigma}(\widehat{\mathcal{D}}_n) + k_1( \alpha , L)\cdot L \cdot p \cdot (\sup\limits_{i,j} w_{ij}^*)^2 \nonumber \\
&\leq & p \cdot  \mathbb{E}_{\Sigma}(\widehat{\mathcal{D}}_n)   O(\ds\sqrt{T}) +o(p). \label{T_{1,2,1,1}}
\end{eqnarray}
For the second term in (\ref{T_{1,2,1}}), where $|j-j'| \geq T $, we use the following bound:
\[
\begin{array}{lcl}
 \ds\sum_{\substack{i=1 \\ i \neq j,j'}}^p w^{*}_{ij}  w^{*}_{ij'}
& \leq &  \ds\sum_{\substack{i=1 \\ i \neq j,j'}}^p (w^{*}_{ij}) ^2 \leq \frac 12,
 \end{array}
\]
then we prove that,
\begin{equation}
\label{T_{1,2,1,2}}
T_{1,2,1,2} := \underset{ |j- j'|  \geq T}{\ds\sum_{\substack{j=1 \\ j \neq i}}^p\sum_{\substack{j'=1 \\ j' \neq i}}^p}
\ds\frac{|j- j'|^{2 \alpha}}{T^{2 \alpha}} \sigma_{jj'}^2 \ds\sum_{i=1}^p w^{*}_{ij} w^{*}_{ij'} \leq \ds\frac{L \cdot p}{T^{2 \alpha}} = O ( \ds\frac{ p}{2 T^{2 \alpha}} )= o(p) .
\end{equation}
 Note that $\sup\limits_{i,j} \sigma_{ij} \leq 1$.
 The second term of $T_{1,2}$, is bounded as follows:
 \begin{eqnarray}
T_{1,2,2} &:=&  \ds\sum_{i=1}^p \underset{ j \neq j'}{\ds\sum_{\substack{j=1 \\ j \neq i}}^p\sum_{\substack{j'=1 \\ j' \neq i}}^p}w^{*}_{ij} w^{*}_{ij'} \sigma_{ij}^2\sigma_{ij'}^2 = \ds\sum_{i=1}^p\Big( \sum_{\substack{j=1 \\ j \neq i}}^p w^{*}_{ij} \sigma_{ij}^2 \Big)\Big( \sum_{\substack{j'=1 \\ j' \neq i}}^p  w^{*}_{ij'}\sigma_{ij'}^2 \Big) \nonumber \\
 &\leq  &  (\sup\limits_{i,j}w^{*}_{ij}) \sup\limits_{i} \Big(\ds\sum_{\substack{j=1 \\ 1 \leq |j-i | < T}}^p  \sigma_{ij}^2\Big) \Big(  \ds\sum_{i=1}^p\sum_{\substack{j'=1 \\ j' \neq i}}^p w^{*}_{ij'} \sigma_{ij'}^2 \Big) \nonumber \\
 &\leq & 2 L \cdot (\sup\limits_{i,j}w^{*}_{ij})\cdot T \cdot p \cdot  \mathbb{E}_{\Sigma}(\widehat{\mathcal{D}}_n) \leq p \cdot  \mathbb{E}_{\Sigma}(\widehat{\mathcal{D}}_n) \cdot O( \sqrt{T}) \label{T_{1,2,2}}
\end{eqnarray}
As a consequence of (\ref{T_{1,2,1,1}}) to (\ref{T_{1,2,2}}),
\begin{equation}
\label{T_{1,2}}
T_{1,2} \leq   p \cdot  \mathbb{E}_{\Sigma}(\widehat{\mathcal{D}}_n)  \cdot  O( \ds\sqrt{T}) + o(p)
\end{equation}
The last case, where $(i,j,i',j')$ vary in $A_3$, the indices are pairwise distinct,
\[
\begin{array}{lcl}
T_{1,3} &=& \underset{(i,j,i',j')\in A_3}{\ds\sum } w^*_{ij} w^*_{i'j'}
  ( \sigma_{ii'}\sigma_{jj'} + \sigma_{ij'}\sigma_{i'j})^2 \\
  &\leq& 2 \underset{(i,j,i',j')\in A_3}{\ds\sum  } w^*_{ij} w^*_{i'j'}
   \sigma^{2}_{ii'}\sigma^{2}_{jj'} + 2\underset{(i,j,i',j')\in A_3}{\ds\sum } w^*_{ij} w^*_{i'j'}\sigma^{2}_{ij'}\sigma^{2}_{i'j}
   \end{array}
 \]
As the two previous terms  have the same upper bound, let us deal with the first one say $ T_{1,3,1}$.
We should distinguish two cases, the first when  $|i-i'| < T$ and the second when $|i-i'|  \geq T$. We begin by the first case, which in turn will be decomposed into three terms. First,
\begin{eqnarray}
\label{T_{1,3,1,1}}
T_{1,3,1,1} &:=& \underset{\underset{|i-j| \geq |i -i' |, |i'-j'| \geq |i -i' |  }{(i,j,i',j')\in A_3}}{\ds\sum } w^*_{ij} w^*_{i'j'}
   \sigma^2_{ii'} \sigma^2_{jj'} \leq \underset{\underset{|i-j| \geq |i -i' |, |i'-j'| \geq |i -i' |  }{(i,j,i',j')\in A_3}}{\ds\sum  } w^{*2}_{ii'}  \sigma^2_{ii'} \sigma^2_{jj'} \nonumber \\
   &\leq & (\sup\limits_{ij} w^*_{ij})  \underset{1 \leq i,i' \leq p}{\sum } w^*_{ii'} \sigma_{ii'}^2 \underset{1 < |i-j|, |i' - j'| < T}{\underset{1 \leq j, j' \leq p}{\sum }} \sigma_{jj'}^2
   \leq  (\sup\limits_{ij} w^*_{ij}) \cdot T^2 \cdot p \cdot \mathbb{E}_{\Sigma}(\widehat{\mathcal{D}}_n)
\end{eqnarray}
Then,
 \begin{eqnarray}
 T_{1,3,1,2} &:=& \underset{\underset{|i-j| < |i -i' |< T, |i'-j'| \geq |i -i' |  }{(i,j,i',j')\in A_3}}{\ds\sum  } w^*_{ij} w^*_{i'j'}
   \sigma^2_{ii'} \sigma^2_{jj'} \leq  \underset{\underset{|i-j| < |i -i' |< T, |i'-j'| \geq |j -j' |  }{(i,j,i',j')\in A_3}}{\ds\sum  } w^*_{ij} w^*_{ii'} 
   \sigma^2_{ii'} \sigma^2_{jj'}  \nonumber\\
   & \leq &(\sup\limits_{ij} w^*_{ij}) \cdot T^2 \cdot p \cdot \mathbb{E}_{\Sigma}(\widehat{\mathcal{D}}_n) \leq k_2(\alpha ,L) \cdot T \ds\sqrt{T} \cdot p \cdot \mathbb{E}_{\Sigma}(\widehat{\mathcal{D}}_n)
\end{eqnarray}
 Finally, using Cauchy-Schwarz inequality, we have,
 \begin{eqnarray}
T_{1,3,1,3} &:=&\underset{\underset{|i-j| < |i -i' |< T, |i'-j'| < |i -i' | <T }{(i,j,i',j')\in A_3}}{\ds\sum   }  w^*_{ij} w^*_{i'j'}
   \sigma^2_{ii'} \sigma^2_{jj'} \nonumber \\
   & = & \underset{\underset{|i-j| < |i -i' |< T, |i'-j'| < |j -j' | < T }{(i,j,i',j')\in A_3}}{\ds\sum  }  w^*_{ij} w^*_{i'j'} \cdot  \ds\frac{ |i -i' |^{2 \alpha}}{|i-j|^{\alpha} |i'-j'|^{\alpha}}  \cdot
   \sigma^2_{ii'} \sigma^2_{jj'} \nonumber \\
   &\leq & ( \sup\limits_{i,j} w_{ij}^*)^2  \ds\sum_{i=1}^p \sum_{\substack{i'=1 \\ i' \neq i'}}^p |i -i' |^{2 \alpha} \sigma^2_{ii'} \underset{1 \leq |i-j| , |i' -j'| < T}{\underset{1 \leq~ j,j'~ \leq p}{ \sum } } \ds\frac{\sigma_{jj'}^2}{|i-j|^{\alpha} |i'-j'|^{\alpha}}  \nonumber \\
   & \leq & k_3(\alpha ,L)   \cdot T^{-1} \cdot 2 pL \cdot \max\{1, T^{-2\alpha+2}\}
   = o(p) \quad \text{ for } \alpha > \frac{1}{2}.   \label{T_{1,3,1,3}}
  \end{eqnarray}
 Now we suppose that we have $|i-i'| > T$, then,
\begin{eqnarray}
T_{1,3,2} &:=& \underset{\underset{ |i-i'| > T }{(i,j,i',j')\in A_3}}{\ds\sum  }w^*_{ij} w^*_{i'j'}  \sigma^{2}_{ii'}\sigma^{2}_{jj'} =\underset{\underset{ |i-i'| > T }{(i,j,i',j')\in A_3}}{\ds\sum  } w^*_{ij} w^*_{i'j'} \ds\frac{ |i-i'|^{2 \alpha} }{T^{2 \alpha}} \sigma^{2}_{ii'}\sigma^{2}_{jj'}  \nonumber \\
& \leq & \ds\frac{(\sup\limits_{i,j} w^*_{ij})^2}{T^{2 \alpha}}     \underset{1 \leq i,i' \leq p}{\ds\sum } |i-i'|^{2 \alpha}\sigma^{2}_{ii'} \underset{1 \leq |i-j|, |i'-j'| <T}{\underset{1 \leq j,j' \leq p}{\ds\sum  }} \sigma^{2}_{jj'} \nonumber \\
&\leq &\ds\frac{(\sup\limits_{i,j} w^*_{ij})^2 }{T^{2 \alpha }}  \cdot   2pL \cdot T^2  \leq \frac{k_4(\alpha ,L) \cdot p}{T^{2\alpha -1}}  = o(p) \quad \text{ for } \alpha > \frac{1}{2}. \label{T_{1,3,2}}
\end{eqnarray}
Finally we obtain, from (\ref{T_{1,3,1,1}}) to (\ref{T_{1,3,2}}) :
\begin{equation}
\label{T_{1,3}}
T_{1,3} \leq p \cdot \mathbb{E}_{\Sigma}(\widehat{\mathcal{D}}_n) \cdot  O(T \ds\sqrt{T})  +  o(p).
\end{equation}
Put together (\ref{T_{1,1}}), (\ref{T_{1,2}}) and (\ref{T_{1,3}}) to obtain (\ref{T_1}).
Let us give an upper bound for the second term of (\ref{var2termes}),
\[
\begin{array}{lcl}
T_2 &=& 4\mathbb{E}_{\Sigma} \{ \underset{i<j}{\ds\sum_{i=1}^p \ds\sum_{j=1}^p} w^*_{ij}
 (X_{k,i}X_{k,j} - \sigma_{ij})\sigma_{ij}\}^2 \\
  &=&  \underset{i \neq j}{\ds\sum_{i=1}^p \ds\sum_{j=1}^p}\underset{i' \neq j'}{\ds\sum_{i'=1}^p \ds\sum_{j'=1}^p} w^*_{ij} w^*_{i'j'}\sigma_{ij} \sigma_{i'j'} \mathbb{E}_{\Sigma} (X_{1,i}X_{1,j} - \sigma_{ij}) (X_{1,i'}X_{1,j'} - \sigma_{i'j'}) \\
  &=& \underset{i \neq j}{\ds\sum_{i=1}^p \ds\sum_{j=1}^p}\underset{i' \neq j'}{\ds\sum_{i'=1}^p \ds\sum_{j'=1}^p}  w^*_{ij} w^*_{i'j'}\sigma_{ij} \sigma_{i'j'} (  \sigma^*_{ii'}\sigma^*_{jj'} + \sigma^*_{ij'}\sigma^*_{i'j})
\end{array}
\]
Proceeding similarly, we shall distinguish three kind of terms. Let us begin by the case when the indices belong to $A_1$,
\begin{eqnarray}
T_{2,1} &=&  2\underset{i \neq j}{\ds\sum_{i=1}^p \ds\sum_{j=1}^p} w^{*2}_{ij} \sigma_{ij}^2 \mathbb{E}_{\Sigma}  [(X_{1,i}X_{1,j} - \sigma_{ij})^2] = 2 \underset{i \neq j}{\ds\sum_{i=1}^p \ds\sum_{j=1}^p} w^{*2}_{ij} \sigma_{ij}^2 (1+ \sigma_{ij}^2) \nonumber \\
& \leq & 4 (\sup\limits_{i,j} w^*_{ij} ) \underset{i \neq j}{\ds\sum_{i=1}^p \ds\sum_{j=1}^p} w^{*}_{ij} \sigma_{ij}^2
= 8 (\sup\limits_{i,j} w^*_{ij} ) \cdot p \cdot \mathbb{E}_{\Sigma}(\widehat{\mathcal{D}}_n) =  o(1) \cdot p \cdot \mathbb{E}_{\Sigma}(\widehat{\mathcal{D}}_n). \label{T_{2,1}}
\end{eqnarray}
Next, when  $(i,j,i',j') \in A_2$,
\[
\begin{array}{lcl}
 T_{2,2} & =& 4 \ds\sum_{i=1}^p \sum_{\substack{j=1 \\ j \neq i}}^p\sum_{\substack{j'=1 \\ j' \neq i}}^p w^{*}_{ij} w^{*}_{ij'} \sigma_{ij}\sigma_{ij'}( \sigma_{jj'} + \sigma_{ij}\sigma_{ij'}) \\
&=& 4  \ds\sum_{i=1}^p \sum_{\substack{j=1 \\ j \neq i}}^p\sum_{\substack{j'=1 \\ j' \neq i}}^p w^{*}_{ij} w^{*}_{ij'} \sigma_{ij}\sigma_{ij'}\sigma_{jj'}  + 4 \ds\sum_{i=1}^p \sum_{\substack{j=1 \\ j \neq i}}^p\sum_{\substack{j'=1 \\ j' \neq i}}^p w^{*}_{ij} w^{*}_{ij'} \sigma_{ij}^2\sigma_{ij'}^2 \\
\end{array}
\]
We bound from each term of $T_{2,2}$ separately. Using Cauchy-Schwarz inequality two times
we obtain,
\begin{eqnarray}
T_{2,2,1} &:=& \ds\sum_{i=1}^p \sum_{\substack{j=1 \\ j \neq i}}^p\sum_{\substack{j'=1 \\ j' \neq i}}^p w^{*}_{ij} w^{*}_{ij'} \sigma_{ij}\sigma_{ij'}\sigma_{jj'} \leq \ds\sum_{i=1}^p \sum_{\substack{j=1 \\ j \neq i}}^p w^{*}_{ij}\sigma_{ij} \Big( \sum_{\substack{j'=1 \\ j' \neq i}}^pw^{*2}_{ij'} \sigma_{ij'}^2 \Big)^{1/2} \Big( \sum_{\substack{j'=1 \\ j' \neq i}}^p \sigma_{jj'}^2 \Big)^{1/2} \nonumber \\
 & \leq & \Big(\ds\sum_{i=1}^p \sum_{\substack{j=1 \\ j \neq i}}^p w^{*2}_{ij}\sigma^2_{ij} \Big)^{1/2}
  \Big(\ds\sum_{i=1}^p \sum_{\substack{j=1 \\ j \neq i}}^p ( \sum_{\substack{j'=1 \\ j' \neq i}}^pw^{*2}_{ij'} \sigma_{ij'}^2 ) ( \sum_{\substack{j'=1 \\ j' \neq i}}^p \sigma_{jj'}^2 ) \Big)^{1/2}   \nonumber \\
& \leq &  (\sup\limits_{i,j} w^{*}_{ij}) \cdot p \cdot  \mathbb{E}_{\Sigma}(\widehat{\mathcal{D}}_n)   \cdot  O(T) = O(\ds\sqrt{T}) \cdot p \cdot  \mathbb{E}_{\Sigma}(\widehat{\mathcal{D}}_n) .  \nonumber
\end{eqnarray}
The second term in $T_{2,2}$ is $T_{1,2,2}$ and therefore,
\begin{equation}
\label{T_{2,2}}
T_{2,2} = O(\ds\sqrt{T}) \cdot p \cdot  \mathbb{E}_{\Sigma}(\widehat{\mathcal{D}}_n)  .
\end{equation}

 Finally, when $(i,j,i',j') \in A_3$, we have to bound from above
 \[
 T_{2,3}= \underset{(i,j,i',j') \in A_3}{ \sum  } w^*_{ij} w^*_{i'j'}\sigma_{ij} \sigma_{i'j'}  \sigma^*_{ii'}\sigma^*_{jj'} + \underset{(i,j,i',j') \in A_3}{ \sum  \sum  } w^*_{ij} w^*_{i'j'}\sigma_{ij} \sigma_{i'j'} \sigma^*_{ij'}\sigma^*_{i'j}.
 \]
 These last two terms, in $T_{2,3}$, are treated similarly, so let us deal with :
 \[
 \begin{array}{lcl}
& & \underset{(i,j,i',j') \in A_3}{ \ds\sum } w^*_{ij} w^*_{i'j'}\sigma_{ij} \sigma_{i'j'}  \sigma^*_{ii'}\sigma^*_{jj'} \\
  & \leq & \ds\sum_j \ds\sum_{i'} \Big( \sum_i w^*_{ij} \sigma^2_{ii'} \Big)^{1/2}  \Big( \sum_i w^*_{ij}\sigma^2_{ij} \Big)^{1/2}
 \Big( \sum_{j'} w^{*}_{i'j'}\sigma_{i'j'}^2 \Big)^{1/2}  \Big( \sum_{j'} w^{*}_{i'j'} \sigma_{jj'}^2 \Big)^{1/2} \\
 & \leq &  \Big( \ds\sum_j \ds\sum_{i'} (\sum_i w^*_{ij}\sigma^2_{ij})    (\sum_{j'} w^{*}_{i'j'}\sigma_{i'j'}^2)\Big)^{1/2}  \Big( \underset{(i,j,i',j') \in A_3}{ \ds\sum } w^*_{ij}  w^{*}_{i'j'} \sigma_{jj'}^2 \sigma^2_{ii'}  \Big)^{1/2} \\
 & \leq &  p \cdot \mathbb{E}_{\Sigma}(\widehat{\mathcal{D}}_n) \cdot \Big(\underset{(i,j,i',j') \in A_3}{ \ds\sum } w^*_{ij}  w^{*}_{i'j'} \sigma_{jj'}^2 \sigma^2_{ii'}  \Big)^{1/2}
 \end{array}
 \]
 Using the upper bound of $T_{1,3}$ obtained previously, we have

\begin{equation}
\label{T_{2,3}}
  T_{2,3} \leq  p \ds\sqrt{p} \cdot \Big( \mathbb{E}^{3/2}_{\Sigma}(\widehat{\mathcal{D}}_n) \cdot O( T^{3/4}) +  \mathbb{E}_{\Sigma}(\widehat{\mathcal{D}}_n)\cdot o(1) \Big)
 \end{equation}
 Put together (\ref{T_{2,1}}), (\ref{T_{2,2}}) and (\ref{T_{2,3}}) to get (\ref{T_2}).

 The asymptotic normality under the null hypothesis is obvious.
\hfill \end{proof}


\begin{proof}[Proof of Proposition~\ref{AN}]
 We use the decomposition (\ref{decomp}) in the proof of the Proposition~\ref{prop:espvar} and we treat each term separately.
Recall that, by our assumptions, $n\sqrt{p}\cdot \mathbb{E}_{\Sigma} (\widehat{\mathcal{D}}_n)=O(1) $. Use (\ref{T_2}) to get
\begin{eqnarray} \label{convenproba}
&& \Var_{\Sigma} \Big( \ds\frac{2}{\ds\sqrt{p}} \sum_{l=1}^n \underset{1 \leq i <j \leq p}{ \ds\sum } w^*_{ij} (X_{l,i}X_{l,j} - \sigma_{ij})\sigma_{ij} \Big) \nonumber \\
 &\leq & \ds\frac{n}{p} \Big( p^{3/2} \Big(o(1) \cdot \mathbb{E}_{\Sigma}(\widehat{\mathcal{D}}_n)  +   O( T^{3/4}) \mathbb{E}^{3/2}_{\Sigma}(\widehat{\mathcal{D}}_n)  \Big) + p  \cdot  \mathbb{E}_{\Sigma}(\widehat{\mathcal{D}}_n)  O(\ds\sqrt{T}) \Big) \nonumber \\
 & = & o(1)n \ds\sqrt{p} \cdot \mathbb{E}_{\Sigma}(\widehat{\mathcal{D}}_n)  +(n \ds\sqrt{p} \cdot \mathbb{E}_{\Sigma}(\widehat{\mathcal{D}}_n))^{3/2}
 \cdot \ds\frac{O(T^{3/4})}{n^{1/2}p^{1/4}} + n \ds\sqrt{p} \cdot \mathbb{E}_{\Sigma}(\widehat{\mathcal{D}}_n) \cdot o(1) 
\end{eqnarray}
This tends to 0, since $T^3/n^2p  = (n^2 p b^2(\varphi))^{-1} \cdot \varphi^{4 -2/\alpha  }=o(1)$, which is true for all $\alpha >1/2$.

It follows that, for proving the asymptotic normality, it is sufficient to prove the asymptotic normality of
$$
n \sqrt{p}\cdot \frac 1{n(n-1)p} \underset{1 \leq k \ne l \leq n}{ \ds\sum } ~\underset{1 \leq i <j \leq p}{ \ds\sum } w^*_{ij}(X_{k,i}X_{k,j} - \sigma_{ij})(X_{l,i}X_{l,j} - \sigma_{ij}).
$$
We study $V_n$ centered, 1-degenerate  U-statistic, with symmetric kernel $H_n(X_1, X_2)$ defined as follows
\begin{eqnarray*}
V_n &= &  \underset{1 \leq k \ne l \leq n}{ \ds\sum } H_n(X_k,X_l), \\
H_n(X_1,X_2) &= &\ds\frac{1}{n \sqrt{p}} \underset{1 \leq i <j \leq p}{ \ds\sum } w^*_{ij}  (X_{k,i}X_{k,j} - \sigma_{ij})(X_{l,i}X_{l,j} - \sigma_{ij}).
\end{eqnarray*}
We apply Theorem 1 of \cite{Hall84}. Therefore we check that
 $ \mathbb{E}_{\Sigma}(H^2_n(X_1,X_2)) < + \infty $ and that
 \[
 \ds\frac{\mathbb{E}_{\Sigma}(G^2_n(X_1, X_2)) + n^{-1} \mathbb{E}_{\Sigma}(H_n^4(X_1, X_2))}{\mathbb{E}_{\Sigma}^2(H_n^2(X_1, X_2))} \longrightarrow 0 ,
 \]
where $ G_n(x,y):= \mathbb{E} _{\Sigma}(H_n(X_1,x)H_n(X_1,y)) $, for $x,y \in \mathbb{R}^p$. We compute
\[
\begin{array}{lcl}
G_n(x,y) 
&=& \ds\frac{1}{n^2  p} \underset{1 \leq i <j \leq p}{ \ds\sum } ~ \underset{1 \leq i' <j' \leq p}{ \ds\sum }   w^*_{ij} w^*_{i'j'}(x_ix_j - \sigma_{ij})(y_i y_j - \sigma_{ij})   (\sigma_{ii'} \sigma_{jj'} + \sigma_{i'j} \sigma_{ij'}).
\end{array}
\]
Since   $  n \ds\sqrt{p} \cdot \mathbb{E}_{\Sigma}(\widehat{\mathcal{D}}_n) = O(1)$,  and from the inequality (\ref{T_1}), we have
$$
 \mathbb{E}_{\Sigma}(H_n^2(X_1, X_2)) = \ds\frac{1}{2 n^2} (1 + o(1)) \,\, .
 $$
In order to prove that $ \mathbb{E}_{\Sigma}(G^2_n(X_1, X_2)) / \mathbb{E}_{\Sigma}^2(H_n^2(X_1, X_2)) = o(1) $, it is sufficient to show that $  \mathbb{E}_{\Sigma} \Big(  \underset{1 \leq i <j \leq p}{ \ds\sum } ~ \underset{1 \leq i' <j' \leq p}{ \ds\sum }  w^*_{ij} w^*_{i'j'} (X_{1,i} X_{1,j} - \sigma_{ij})(X_{2,i'} X_{2,j'} - \sigma_{i'j'})  (\sigma_{ii'} \sigma_{jj'} + \sigma_{i'j} \sigma_{ij'})  \Big)^2 =o(p^2).$ In fact,
\begin{eqnarray}
&& \mathbb{E}_{\Sigma} \Big(  \underset{1 \leq i <j \leq p}{ \ds\sum } ~ \underset{1 \leq i' <j' \leq p}{ \ds\sum }  w^*_{ij} w^*_{i'j'} (X_{1,i} X_{1,j} - \sigma_{ij})(X_{2,i'} X_{2,j'} - \sigma_{i'j'})  (\sigma_{ii'} \sigma_{jj'} + \sigma_{i'j} \sigma_{ij'})  \Big)^2 \nonumber \\[0.5 cm]
 &=& \underset{1 \leq i_1<j_1 \leq p}{ \ds\sum } ~ \underset{1 \leq i_1' <j_1' \leq p}{ \ds\sum }  ~ \underset{1 \leq i_2 <j_2 \leq p}{ \ds\sum } ~ \underset{1 \leq i'_2 <j'_2 \leq p}{ \ds\sum }   w^{*}_{i_1j_1} w^{*}_{i'_1j'_1} w^{*}_{i_2 j_2} w^{*}_{i_2'j_2'} (\sigma_{i_1 i'_1} \sigma_{j_1 j'_1} + \sigma_{i'_1 j_1} \sigma_{i_1j'_1})  (\sigma_{i_2 i_2'} \sigma_{j_2 j_2'}
\nonumber \\
& & + \sigma_{i_2'j_2} \sigma_{i_2 j_2'})  \cdot \mathbb{E}[(X_{1,i_1} X_{1,j_1} - \sigma_{i_1j_1})(X_{1,i_2} X_{1,j_2} - \sigma_{i_2j_2})]  \mathbb{E}[(X_{2,i'_1} X_{2,j'_1} - \sigma_{i'_1j'_1})(X_{2,i_2'} X_{2,j_2'} - \sigma_{i_2'j_2'})] \nonumber \\[0.5 cm]
&= & \underset{1 \leq i_1 <j_1 \leq p}{ \ds\sum } ~ \underset{1 \leq i'_1 <j'_1 \leq p}{ \ds\sum }  ~ \underset{1 \leq i_2 <j_2 \leq p}{ \ds\sum } ~ \underset{1 \leq i'_2 <j'_2 \leq p}{ \ds\sum }   w^{*}_{i_1 j_1} w^{*}_{i'_1j'_1} w^{*}_{i_2 j_2} w^{*}_{i_2'j_2'} (\sigma_{i_1i'_1} \sigma_{j_1j'_1} + \sigma_{i'_1j_1} \sigma_{i_1j'_1}) \nonumber \\
& &  \cdot  (\sigma_{i_2 i_2'} \sigma_{j_2 j_2'} + \sigma_{i_2'j_2} \sigma_{i_2 j_2'})
 (\sigma_{i_1i_2} \sigma_{j_2j_1} + \sigma_{i_1j_2} \sigma_{i_2j_1})
(\sigma_{i'_1i'_2} \sigma_{j_2'j'_1} + \sigma_{i'_1j_2'} \sigma_{i_2'j'_1})  \label{EG^2}
\end{eqnarray}
To bound from above (\ref{EG^2}), we shall distinguish four cases.
The first one is when all couples of indices are equal,
 \[
 \begin{array}{lcl}
 \mathcal{G}_1 &:=& \underset{1 \leq i_1 <j_1 \leq p}{ \ds\sum }  w^{*4}_{i_1j_1} ( 1 + \sigma_{i_1j_1}^2)^4 \leq ( \sup\limits_{i_1,j_1}w_{i_1j_1}^{*^2}) \cdot  (\sup\limits_{i_1,j_1}(1 + \sigma_{i_1j_1}^2)^4) \cdot \underset{1 \leq i_1 <j_1 \leq p}{ \ds\sum }  w^{*2}_{i_1j_1} \\
 & \leq & 8 \cdot ( \sup\limits_{i_1,j_1}w_{i_1j_1}^{*^2}) \cdot p = o(p) =o(p^2).
 \end{array}
\]
The second one is when we have two different  pairs of couples of indices, which can be obtained by two different combinations of the couples of indices. When we have equal pairs of couples of indices, as for example  $(i_1,j_1)=(i_2, j_2)$,  $(i'_1,j'_1) = (i_2', j_2')$ and $ (i_1,j_1)\neq (i'_1,j'_1)$, we get
\[
\begin{array}{lcl}
\mathcal{G}_{2,1} &:=& \underset{1 \leq i_1 <j_1 \leq p}{ \ds\sum } ~ \underset{1 \leq i'_1 <j'_1 \leq p}{ \ds\sum }  w^{*2}_{i_1j_1} w^{*2}_{i'_1j'_1} (\sigma_{i_1i'_1} \sigma_{j_1j'_1} + \sigma_{i'_1j_1} \sigma_{i_1j'_1})^2  ( 1 + \sigma_{i_1j_1}^2)(1 + \sigma_{i'_1j'_1}^2) \\[0.5cm]
& \leq &  ( \sup\limits_{i_1,j_1}w_{i_1j_1}^{*^2}) \cdot (\sup\limits_{i_1,j_1}(1 + \sigma_{i_1j_1}^2)^2) \cdot \underset{1 \leq i_1 <j_1 \leq p}{ \ds\sum } ~ \underset{1 \leq i'_1 <j'_1 \leq p}{ \ds\sum }  w^{*}_{i_1j_1} w^{*}_{i'_1j'_1} (\sigma_{i_1i'_1} \sigma_{j_1j'_1} + \sigma_{i'_1j_1} \sigma_{i_1j'_1})^2 \\[0.5cm]
& \leq & 4 \cdot  ( \sup\limits_{i_1,j_1}w_{i_1j_1}^{*^2})  \cdot n^2  p \cdot  \mathbb{E}_{\Sigma}(H_n^2(X_1, X_2)) = 4 \cdot  ( \sup\limits_{i_1,j_1}w_{i_1j_1}^{*^2})  \cdot p = o(p^2).
\end{array}
\]
When we have three couples of indices equal, for example $(i_1,j_1)=(i_2,j_2)=(i'_2,j'_2)$ and $ (i_1,j_1) \neq (i'_1,j'_1)$, we get
\[
\begin{array}{lcl}
\mathcal{G}_{2,2} &:=& \underset{1 \leq i_1 <j_1 \leq p}{ \ds\sum } ~ \underset{1 \leq i'_1 <j'_1 \leq p}{ \ds\sum }  w^{*3}_{i_1j_1} w^{*}_{i'_1j'_1}  (\sigma_{i_1i'_1} \sigma_{j_1j'_1} + \sigma_{i'_1j_1} \sigma_{i_1j'_1})^2  ( 1 + \sigma_{i_1j_1}^2)(1 + \sigma_{i'_1j'_1}^2) \\
& \leq &  4 \cdot  ( \sup\limits_{i_1,j_1}w_{i_1j_1}^{*^2})  \cdot n^2  p \cdot  \mathbb{E}_{\Sigma}(H_n^2(X_1, X_2)) = o(p^2).
\end{array}
\]
For the third case, there are  three different couples of pairs of indices, for example, $(i_1,j_1)=(i'_2,j'_2)$ and $ (i_1,j_1) \neq (i'_1,j'_1) \neq (i_2,j_2)$. Using Cauchy-Schwarz inequality several times we obtain,
\[
\begin{array}{lcl}
\mathcal{G}_3 &:=&  \underset{1 \leq i_1 <j_1 \leq p}{ \ds\sum } ~ \underset{1 \leq i'_1 <j_1' \leq p}{ \ds\sum }  ~ \underset{1 \leq i_2 <j_2 \leq p}{ \ds\sum }  w^{*}_{i_1j_1} w^{*}_{i_1'j'_1} w^{*2}_{i_2 j_2}(\sigma_{i_1i'_1} \sigma_{j_1j'_1} + \sigma_{i_1'j_1} \sigma_{i_1j'_1}) \\
&&  \hspace{3cm} \cdot (\sigma_{i_1i_2} \sigma_{j_2j_1} + \sigma_{i_1j_2} \sigma_{i_2j_1}) (\sigma_{i_1'i_2} \sigma_{j_2j'_1} + \sigma_{i'_1j_2} \sigma_{i_2j'_1})( 1 + \sigma_{i_2,j_2}^2) \\[0.5cm]
& \leq & \underset{1 \leq i'_1 <j'_1 \leq p}{ \ds\sum }  ~ \underset{1 \leq i_2 <j_2 \leq p}{ \ds\sum } w^{*}_{i'_1j'_1} w^{*2}_{i_2 j_2} (\sigma_{i'_1i_2} \sigma_{j_2j'_1} + \sigma_{i'_1j_2} \sigma_{i_2j'_1})( 1 + \sigma_{i_2,j_2}^2) \\
 && \cdot \Big(  \underset{1 \leq i_1 <j_1 \leq p}{ \ds\sum }   w^{*}_{i_1j_1} (\sigma_{i_1i'_1} \sigma_{j_1j'_1} + \sigma_{i'_1j_1} \sigma_{i_1j'_1})^2 \Big)^{1/2}  \Big(  \underset{1 \leq i_1 <j_1 \leq p}{ \ds\sum }   w^{*}_{i_1j_1}  (\sigma_{i_1i_2} \sigma_{j_2j_1} + \sigma_{i_1j_2} \sigma_{i_2j_1})^2 \Big)^{1/2} \\[0.5cm]
& \leq &  \underset{1 \leq i_2 <j_2 \leq p}{ \ds\sum }  w^{*2}_{i_2 j_2} ( 1 + \sigma_{i_2,j_2}) ^2 \Big(\underset{1 \leq i'_1 <j'_1 \leq p}{ \ds\sum } w_{i'_1j'_1}^*  (\sigma_{i'_1i_2} \sigma_{j_2j'_1} + \sigma_{i'_1j_2} \sigma_{i_2j'_1})^2\Big)^{1/2} \\
&& \cdot \Big(\underset{1 \leq i'_1 <j'_1 \leq p}{ \ds\sum }  \underset{1 \leq i_1 <j_1 \leq p}{ \ds\sum } w_{i'_1j'_1}^*   w^{*}_{i_1j_1}  (\sigma_{i_1i_1'} \sigma_{j_1'j_1} + \sigma_{i_1j_1'} \sigma_{i_1'j_1})^2 \Big)^{1/2} \\
&& \cdot   \Big(  \underset{1 \leq i_1 <j_1 \leq p}{ \ds\sum }   w^{*}_{i_1j_1}  (\sigma_{i_1i_2} \sigma_{j_2j_1} + \sigma_{i_1j_2} \sigma_{i_2j_1})^2 \Big)^{1/2} .\\[0.5cm]
\end{array}
\]
Moreover, we recognize in these bounds
$$
\underset{ i'_1 <j'_1 }{ \ds\sum }  \underset{ i_1 <j_1 }{ \ds\sum } w_{i'_1j'_1}^*   w^{*}_{i_1j_1}  (\sigma_{i_1i_1'} \sigma_{j_1'j_1} + \sigma_{i_1j_1'} \sigma_{i_1'j_1})^2 = n^2 p \cdot \mathbb{E}_{\Sigma}(H_n^2(X_1, X_2))
$$
which is $O(p)$. Thus,
\[
\begin{array}{lcl}
\mathcal{G}_3 & \leq & \sup\limits_{i_2,j_2}( 1 + \sigma_{i_2 j_2})^2 \cdot \Big( \underset{1 \leq i_2 <j_2 \leq p}{ \ds\sum }  \underset{1 \leq i'_1 <j'_1 \leq p}{ \ds\sum }w^{*2}_{i_2 j_2} w_{i'_1j'_1}^*  (\sigma_{i'_1i_2} \sigma_{j_2j'_1} + \sigma_{i'_1j_2} \sigma_{i_2j'_1})^2 \Big)^{1/2} \\
&&  \cdot \Big(n^2 p \cdot  \mathbb{E}_{\Sigma}(H_n^2(X_1, X_2)) \cdot  \underset{1 \leq i_2 <j_2 \leq p}{ \ds\sum } \underset{1 \leq i_1 <j_1 \leq p}{ \ds\sum } w^{*2}_{i_2 j_2}  w^{*}_{i_1j_1}  (\sigma_{i_1i_2} \sigma_{j_2j_1} + \sigma_{i_1j_2} \sigma_{i_2j_1})^2 \Big)^{1/2} \\
& \leq & 2 ( \sup\limits_{i_1,j_1}w_{i_1j_1}^{*})  \cdot n^3 p^{3/2} \cdot  \mathbb{E}^{3/2}_{\Sigma}(H_n^2(X_1, X_2)) \leq               (\sup\limits_{i_1,j_1}w_{i_1j_1}^{*}) \cdot p^{3/2} = o(p^{3/2})=o(p^2).
\end{array}
\]
 Now we will treat the last case, when the pairs of indices are pairwise distinct, in this case, we have 16 terms to handle. As all terms are treated the same way, let us deal with:
\begin{eqnarray*}
\mathcal{G}_4 &:=& \underset{1 \leq i_1 <j_1 \leq p}{ \ds\sum } ~ \underset{1 \leq i'_1 <j'_1 \leq p}{ \ds\sum }  ~ \underset{1 \leq i_2 <j_2 \leq p}{ \ds\sum } ~ \underset{1 \leq i'_2 <j'_2 \leq p}{ \ds\sum }   w^*_{i_1j_1} w^*_{i'_1j'_1} w^{*}_{i_2 j_2} w^{*}_{i_2'j_2'} \\
&& \cdot \sigma_{i_1i'_1}\sigma_{i_2 i'_2} \sigma_{j_2 j_2'} \sigma_{j_1j'_1}\sigma_{i_1i_2} \sigma_{j_2j_1}\sigma_{i'_1i'_2} \sigma_{j_2'j'_1}
\end{eqnarray*}
In order to find an upper bound for $\mathcal{G}_4 $, we decompose the previous sums, into several sums, similarly to   the  upper bound of  ($\ref{T_{1,3}})$. That is $(i_1, j_1, i'_1,j'_1, i_2, j_2, i'_2,j'_2) \in J_1 \cup J_2 \cup \dots \cup J_{16} $, where $J_1, \dots, J_{16}$, form a partition of the set $  \{ (i_1, j_1, i'_1,j'_1, i_2, j_2, i'_2,j'_2) \in \{1, \dots,p\}^8 \} $. Let us define,
$$
J_1 :=  \{ (i_1, j_1, i'_1,j'_1, i_2, j_2, i'_2,j'_2) \in \{1, \dots,p\}^8; 1 <  |i_1-i'_1| , |i_1-i_2|, |i_2 - i'_2| , |i'_1-i'_2|< T) \},
$$
$$
J_2 :=  \{ (i_1, j_1, i'_1,j'_1, i_2, j_2, i'_2,j'_2) \in \{1, \dots,p\}^8;  1 <  |i_1-i'_1| , |i_1-i_2|, |i_2 - i'_2|< T, \text{ and } |i'_1-i'_2|> T) \},
$$
and so on, for all $J_r \,, r=3, \dots, 16$. To bound from above the sum over $J_1$, we partition again $J_1$, $J_1 = J_{1,1} \cup \dots \cup J_{1,16}$  such that,
$$
\begin{array}{lcl}
J_{1,1} &:=& \{ (i_1, j_1, i'_1,j'_1, i_2, j_2, i'_2,j'_2) \in \{1, \dots,p\}^8; |i_1-i'_1| \leq |i_1-j_1|,|i'_1-i'_2| \leq |i'_1 -j'_1|, \\
  && \hspace{1cm} |i_1 - i_2| \leq |i_2 -j_2| \text{ and } |i_2 - i'_2| \leq |i'_2 -j'_2|  \},
\end{array}
$$
and so on, until we get the partition of $J_1$.

\begin{eqnarray*}
\mathcal{G}_{4,1} &:=& \underset{(i_1, j_1, i'_1,j'_1, i_2, j_2, i'_2,j'_2) \in J_{1,1} }{ \underset{1 \leq i_1 <j_1 \leq p}{ \ds\sum } ~  \underset{1 \leq i'_1 <j'_1 \leq p}{ \ds\sum } {{\underset{1 \leq i_2 <j_2 \leq p}{ \ds\sum } ~ \underset{1 \leq i'_2 <j'_2 \leq p}{ \ds\sum }}}}  w^{*}_{i_1j_1} w^{*}_{i'_1j'_1} w^{*}_{i_2 j_2} w^{*}_{i_2'j_2'} \\
&& \cdot \sigma_{i_1i'_1} \sigma_{j_1j'_1}\sigma_{i_2 i_2'} \sigma_{j_2 j_2'} \sigma_{i_1i_2} \sigma_{j_2j_1}\sigma_{i'_1i'_2} \sigma_{j_2'j'_1}  \nonumber \\ \nonumber \\
& \leq &   \underset{1 \leq i_1 ,i'_1 \leq p}{ \ds\sum } ~  \underset{1 \leq i_2,i'_2 \leq p}{ \ds\sum } w^{*}_{i_1i'_1}w^{*}_{i_1i_2}w^{*}_{i'_1i'_2}w^{*}_{i_2i'_2} \sigma_{i_1i'_1}\sigma_{i_2 i_2'}\sigma_{i_1i_2}\sigma_{i'_1i'_2} \\
&& \cdot \hspace{-0.5 cm} \underset{1 < |i_1-j_1|, |i'_1 -j'_1|,|i_2 -j_2|, |i'_2 -j'_2| <T}{{{\underset{1 \leq j_1,j'_1  \leq p}{ \ds\sum } ~ \underset{1 \leq j_2 , j'_2 \leq p}{ \ds\sum }}}}  \hspace{-0.5 cm} \sigma_{j_2 j_2'} \sigma_{j_1j'_1} \sigma_{j_2j_1} \sigma_{j_2'j'_1}  \nonumber \\
& \leq &  T^4 \cdot (\sup\limits_{i_1,j_1} w^*_{i_1j_1})^2 \cdot  \underset{1 \leq i_1 ,i'_1 \leq p}{ \ds\sum } ~  \underset{1 \leq i_2,i'_2 \leq p}{ \ds\sum } \ds\sqrt{w^{*}_{i_1i'_1}w^{*}_{i_1i_2}w^{*}_{i'_1i'_2}w^{*}_{i_2i'_2}} \sigma_{i_1i'_1}\sigma_{i_2 i_2'}\sigma_{i_1i_2}\sigma_{i'_1i'_2}\nonumber \\
& \leq &  T^4 \cdot (\sup\limits_{i_1,j_1} w^*_{i_1j_1})^2 \Big(  \underset{1 \leq i_1 ,i'_1 \leq p}{ \ds\sum } ~  \underset{1 \leq i_2,i'_2 \leq p}{ \ds\sum } w^{*}_{i_1i'_1}w^{*}_{i_2i'_2}
 \sigma^2_{i_1i'_1}\sigma^2_{i_2 i_2'} \Big)^{1/2} \\
&& \cdot \Big(  \underset{1 \leq i_1 ,i_1' \leq p}{ \ds\sum } ~  \underset{1 \leq i_2,i'_2 \leq p}{ \ds\sum } w^{*}_{i_1i_2}w^{*}_{i'_1i'_2}
 \sigma^2_{i_1i_2}\sigma^2_{i'_1i'_2} \Big)^{1/2} \nonumber \\
 & \leq & T^4 \cdot (\sup\limits_{i_1,j_1} w^*_{i_1j_1})^2 \cdot p^2 \cdot \mathbb{E}^2_{\Sigma}(\widehat{D}_n) \nonumber
\end{eqnarray*}
Again, by our assumption that $n^2 p \cdot \mathbb{E}^2_{\Sigma}(\widehat{D}_n) = O(1)$, we can see that :
\[
\mathcal{G}_{4,1} \leq \kappa_0(\alpha, L) \cdot T^3  \cdot p^2 \cdot \mathbb{E}^2_{\Sigma}(\widehat{D}_n)   =  p^2 \cdot O( \ds\frac{T^3}{n^2 p} ) =p^2 \cdot o(1)
\]
where, from now on, $ \kappa_0(\alpha, L), \kappa_1(\alpha, L), \dots, $ denote constants that depend on $ \alpha $ and $L$.
Now, we define $J_{1,2} := \{(i_1, j_1, i'_1,j'_1, i_2, j_2, i'_2,j'_2) \in \{1,\dots,p \}^8, \text{ such that } |i-i'| \leq |i-j|$, $|i'-i'_1| \leq |i' -j'|$, $|i - i_1| \leq |i_1 -j_1|$ and $|i_1 - i'_1| > |i'_1 -j'_1|\}$, thus we have,
\begin{eqnarray}
\mathcal{G}_{4,2} &:=& \underset{(i_1, j_1, i'_1,j'_1, i_2, j_2, i'_2,j'_2) \in J_{1,2} }{\ds\sum  ~  \ds\sum  ~ \ds\sum  ~\ds\sum }  w^{*}_{i_1j_1} w^{*}_{i'_1j'_1} w^{*}_{i_2 j_2} w^{*}_{i_2'j_2'} \sigma_{i_1i'_1}\sigma_{i_2 i_2'} \sigma_{j_1 j_1'} \sigma_{j_2j'_2}\sigma_{i_1i_2} \sigma_{j_2j_1}\sigma_{i'_1i'_2} \sigma_{j_2'j'_1}  \nonumber \\ \nonumber \\
& \leq &  (\sup\limits_{i_1,j_1} w^*_{i_1j_1})^{5/2} \cdot \underset{1 \leq i_1 ,i'_1 \leq p}{ \ds\sum } ~  \underset{1 \leq i_2,i'_2 \leq p}{ \ds\sum } \ds\sqrt{ w^{*}_{i_1i'_1}w^{*}_{i_1i_2}w^{*}_{i'_1i'_2}} \cdot  |i_2 -i'_2|^{ \alpha} \cdot \sigma_{i_1i'_1}\sigma_{i_2 i_2'}\sigma_{i_1i_2}\sigma_{i'_1i'_2}
\nonumber \\
&& \hspace{2cm} \cdot  \underset{1 < |i_1-j_1|, |i'_1 -j'_1|,|i_2 -j_2|, |i'_2 -j'_2| <T}{{{\underset{1 \leq j_1,j'_1  \leq p}{ \ds\sum } ~ \underset{1 \leq j_2 , j'_2 \leq p}{ \ds\sum }}}} \ds\frac{1}{|i'_2 - j'_2|^{\alpha}} \cdot \sigma_{j_2 j_2'} \sigma_{j_1j'_1} \sigma_{j_2j_1} \sigma_{j_2'j'_1}  \nonumber \\
& \leq &  (\sup\limits_{i_1,j_1} w^*_{i_1j_1})^{5/2} \cdot \Big(  \underset{1 \leq i_1 ,i'_1 \leq p}{ \ds\sum } ~  \underset{1 \leq i_2,i'_2 \leq p}{ \ds\sum } w^{*}_{i_1i'_1} |i_2 -i'_2|^{ 2\alpha}
 \sigma^2_{i_1i'_1}\sigma^2_{i_2 i_2'} \Big)^{1/2} \nonumber \\
 && \hspace{2cm} \cdot \Big(  \underset{1 \leq i_1 ,i'_1 \leq p}{ \ds\sum } ~  \underset{1 \leq i_2,i'_2 \leq p}{ \ds\sum } w^{*}_{i_1i_2}w^{*}_{i'_1i'_2}
 \sigma^2_{i_1i_2}\sigma^2_{i'_1i'_2} \Big)^{1/2}  \cdot  T^3 \cdot \max \{ 1, T^{ - \alpha +1} \}  \nonumber \\
 & \leq & \ds\sqrt{2L} \cdot  (\sup\limits_{i_1,j_1} w^*_{i_1j_1})^{5/2} \cdot  T^3 \cdot \max \{ 1, T^{ - \alpha +1} \} \cdot p^2 \cdot \mathbb{E}^{3/2}_{\Sigma}(\widehat{D}_n) \nonumber
\end{eqnarray}
Therefore,
\begin{eqnarray}
\mathcal{G}_{4,2} & \leq & \kappa_1(\alpha,L)  \cdot  \max \{ T^{7/4}, T^{11/4 - \alpha } \}    \cdot \mathbb{E}^{3/2}_{\Sigma}(\widehat{D}_n) \nonumber \\
& \leq &  \kappa_1(\alpha,L) \cdot \max \{ T^{7/4}, T^{11/4 - \alpha } \}    \cdot O(\ds\frac{1}{n^{3/2} p^{3/4}}) \nonumber \\
& = & o(1) \quad \text{ since } T^3/n^2p \longrightarrow 0 
\end{eqnarray}
Using similar arguments, we can prove that all remaining terms tend to zero. In consequence,
\[
\ds\frac{\mathbb{E}_{\Sigma}(G^2_n(X_1, X_2)) }{\mathbb{E}_{\Sigma}^2(H_n^2(X_1, X_2))} \longrightarrow 0.
\]

Now let us prove that, $\mathbb{E}_\Sigma (H_n^4 (X_1,X_2))/\mathbb{E}_{\Sigma}^2(H_n^2(X_1, X_2)) =o(n) $,
\begin{eqnarray}
&& \mathbb{E}_\Sigma(H_n^4 (X_1,X_2)) = \ds\frac{1}{n^4 p^2}   \underset{i_1<j_1}{ \ds\sum }   \underset{i_2<j_2}{ \ds\sum }    \underset{i_3<j_3}{ \ds\sum }   \underset{i_4<j_4}{ \ds\sum }
w^{*}_{i_1j_1} w^{*}_{i_2j_2} w^{*}_{i_3j_3} w^{*}_{i_4j_4} \nonumber \\
&& \hspace{-0.4cm} \cdot \mathbb{E}_\Sigma^2[ (X_{1,i_1} X_{1,j_1} - \sigma_{i_1j_1}) (X_{1,i_2} X_{1,j_2} - \sigma_{i_2j_2}) (X_{1,i_3} X_{1,j_3} - \sigma_{i_3j_3}) (X_{1,i_4} X_{1,j_4} - \sigma_{i_4j_4})] \nonumber \\ \nonumber
\end{eqnarray}
The above squared expected value is a sum of a large number of terms that are all treated similarly. Let us consider examples of terms containing squared terms and products of terms, respectively.  For $\alpha > 1/2 $,
\begin{eqnarray}
\mathcal{H}_1 &:= & \underset{ i_1<j_1 }{ \ds\sum }   \underset{i_2<j_2}{ \ds\sum}    \underset{i_3<j_3}{ \ds\sum}   \underset{i_4<j_4}{ \ds\sum}
w^{*}_{i_1j_1} w^{*}_{i_2j_2} w^{*}_{i_3j_3} w^{*}_{i_4j_4} \sigma_{i_1 i_2}^2 \sigma_{j_1j_2}^2 \sigma_{i_3i_4}^2 \sigma_{j_3j_4}^2 \nonumber \\
& \leq  & 4 (\sup\limits_{i,j} w^*_{ij})^4 \ds\sum_{i_1 =1}^p \sum_{i_2=1}^p |i_1 - i_2|^{2 \alpha } \sigma^2_{i_1 i_2} \sum_{\substack{j_1 =1 \\ |i_1-j_1| < T }}^p \sup\limits_{j_2 }\sigma_{j_1j_2}^2 \sum_{\substack{j_2 =1 \\ |i_2 -j_2|< T}}^p \frac{1}{|j_1 -j_2|^{2 \alpha}} \nonumber \\
&& \cdot \ds\sum_{i_3 =1}^p \sum_{i_4=1}^p  |i_3 - i_4|^{2 \alpha} \sigma^2_{i_3 i_4} \sum_{\substack{j_3 =1 \\ |i_3-j_3|<T}}^p \sup\limits_{j_4} \sigma_{j_3 j_4}^2 \sum_{\substack{j_4 =1 \\ |i_4 -j_4|< T}}^p
\frac{1}{|j_3 - j_4|^{2 \alpha}} \nonumber \\
& \leq &  16 L^2 \cdot ( 2 \alpha - 1)^{-2} \cdot (\sup\limits_{i,j} w^*_{ij})^4 \cdot p^2 T^2  \leq \kappa_2( \alpha , L) \cdot p^2 \nonumber
\end{eqnarray}
The terms containing no squared values are treated as, e.g.,
\begin{eqnarray}
\mathcal{H}_2&:=& \underset{i_1<j_1}{ \ds\sum}   \underset{i_2<j_2}{ \ds \sum}    \underset{i_3<j_3}{ \ds\sum}   \underset{i_4<j_4}{ \ds\sum}
w^{*}_{i_1j_1} w^{*}_{i_2j_2} w^{*}_{i_3j_3} w^{*}_{i_4j_4} \sigma_{i_1 i_2} \sigma_{j_1j_2} \sigma_{i_3i_4} \sigma_{j_3j_4}\sigma_{i_1 i_3} \sigma_{j_1j_3} \sigma_{i_2i_4} \sigma_{j_2j_4} \nonumber
\end{eqnarray}
We can see that $\mathcal{H}_2 $ coincides with  $\mathcal{G}_{4,2}$.
Then we can deduce that ,
\[
\ds\frac{\mathbb{E}_{\Sigma}(H_n^4(X_1, X_2))}{\mathbb{E}_{\Sigma}^2(H_n^2(X_1, X_2))} = O(1) = o(n).
\]
Finally  we can apply \cite{Hall84}, and we obtain:
\begin{equation}
\label{convenloi}
V_n = \ds\frac{1}{n \ds\sqrt{p}} \underset{1 \leq k \neq l \leq n}{ \ds\sum } ~\underset{1 \leq i <j \leq p}{ \ds\sum } w^*_{ij}(X_{k,i}X_{k,j} - \sigma_{ij})(X_{l,i}X_{l,j} - \sigma_{ij}) \stackrel{\mathcal{L}}{\longrightarrow} N(0,1).
\end{equation}
Combining (\ref{convenproba}) and (\ref{convenloi}), we have by Slutsky theorem that:
\[
 n \sqrt{p} \cdot (\widehat{\mathcal{D}}_n - \mathbb{E}_{\Sigma}(\widehat{\mathcal{D}}_n)) \stackrel{\mathcal{L}}{\longrightarrow} N(0,1).
\]
\hfill \end{proof}

\begin{proof}[Proof of Proposition~\ref{cor1} ]
Let us check the case where $u_{ij} = 1$ for all $i,\,j$ such that $|i-j| \leq T$ and the generalization to all $U$ in $\mathcal{U}$ will be obvious.
Using Gershgorin's Theorem we get that each eigenvalue of $\Sigma^*_U
 = [ u_{ij} \sigma^*_{ij} ]_{1 \leq i,j \leq p} $ lies in one of the disks centered in $\sigma_{ii} = 1$ and radius $R_i = \ds\sum_{\substack{j=1 \\ j\neq i }}^p |u_{ij} \sigma^*_{ij}| = \ds\sum_{\substack{j=1 \\ j\neq i }}^p \sigma^*_{ij}$.
We have,
\[
\begin{array}{lcl}
\ds\sum_{\substack{j=1 \\ j\neq i }}^p \sigma^*_{ij} &= &\sqrt{\lambda} \ds\sum_{\substack{j=1 \\ j\neq i }}^p\left( 1- (\ds\frac{|i-j|}{T})^{2\alpha}\right)_+^{\frac{1}{2}} \leq 2 \sqrt{\lambda} \sum_{k=1}^T \Big( 1 - (\frac{k}{T})^{2\alpha } \Big)^{\frac{1}{2}} \\[0.5 cm]
& \leq& 2 \sqrt{\lambda} \Big( \ds\sum_{k=1}^T ( 1 - (\ds\frac{k}{T})^{2\alpha })  \Big)^{\frac{1}{2}} T^{^{\frac{1}{2}}} =
O(1) T\sqrt{\lambda }
\\
&\leq &  O(1) \varphi^{1 - \frac{1}{2 \alpha}} \to 0 \text{ provided that } \alpha > 1/2.
\end{array}
\]
We deduce that the smallest eigenvalue is bounded from below by
$$
\min_{i=1,...,p}\lambda_{i,U} \geq \min_{i} \{ \sigma^*_{ii} - \ds\sum_{\substack{j=1 \\ j\neq i }}^p \sigma^*_{ij} \} = 1 - \max_{i} \ds\sum_{\substack{j=1 \\ j\neq i }}^p \sigma^*_{ij} \geq 1 - O(1) \varphi^{1-\frac 1{2\alpha}}
$$
which is strictly positive for $\varphi >0$ small enough.
\end{proof}

\begin{proposition}
\label{prop:Wijproperty}
For all $1 \leq i < j \leq p$, $W_{ij}$ is a centered random variable with variance, $\Var_I (W_{ij})= n$. Moreover, for $1 \leq i < j \ne j' \leq p$, we have
\[
\mathbb{E}_I(W_{ij}^4)  = 3n^2 + 6n \, , \quad  \mathbb{E}_I( W_{ij}^2 W_{ij'}^2)= n^2 + 2 n \, , \quad \mathbb{E}_I( W_{ij}^4 W_{ij'}^4) =  9( n^4 + 12 n^3 + 44n^2 + 48n).
\]
Also we have that $\mathbb{E}_I( W_{ij}^8) = 105n^4(1 +o(1))$.
Note that if we have $ i \ne i'$  and $j \ne j'$, then $W^d_{ij}$ and $W^d_{i'j'}$ are not correlated for $d$ finite integer.
\noindent
Moreover, for all $1 \leq i \leq p$, the random variables $W_{ii}$ are such that,
\[
\mathbb{E}(W_{ii}) = n \, , \quad \mathbb{E}_I(W_{ii}^2)= n^2 + 2n \, , \quad  \mathbb{E}_I(W_{ii}^4)= n^4 + 12 n^3 + 44 n^2 + 48n.
\]
\end{proposition}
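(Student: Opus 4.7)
The plan is to exploit the fact that under $P_I$ the coordinates $X_{k,i}$ are i.i.d.\ $\mathcal{N}(0,1)$, so every quantity reduces to a chi-square moment or a conditional Gaussian moment. Two elementary tools will carry everything: (i) the explicit moments of $\chi^2_n$, namely $\mathbb{E}(S) = n$, $\mathbb{E}(S^2) = n(n+2)$, $\mathbb{E}(S^3) = n(n+2)(n+4)$, $\mathbb{E}(S^4) = n(n+2)(n+4)(n+6)$ for $S = \sum_{k=1}^n X_{k,i}^2$; and (ii) the conditioning identity that, for $i\neq j$, given $X_{\cdot,i} = (X_{1,i},\dots,X_{n,i})$, the sum $W_{ij} = \sum_k X_{k,i} X_{k,j}$ is $\mathcal{N}(0, \|X_{\cdot,i}\|^2)$ and, moreover, $W_{ij}$ and $W_{ij'}$ are conditionally independent (and identically distributed) for any $j,j' \notin \{i\}$ with $j \neq j'$.

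I would first dispatch the diagonal case: $W_{ii} = \sum_{k=1}^n X_{k,i}^2 \sim \chi^2_n$, so $\mathbb{E}_I(W_{ii}) = n$, $\mathbb{E}_I(W_{ii}^2) = n(n+2) = n^2 + 2n$, and $\mathbb{E}_I(W_{ii}^4) = n(n+2)(n+4)(n+6) = n^4 + 12n^3 + 44n^2 + 48n$. Next, for $i < j$ centered and variance, I write $W_{ij} = \sum_k Y_k$ with $Y_k := X_{k,i}X_{k,j}$ i.i.d., $\mathbb{E}(Y_k) = 0$, $\mathbb{E}(Y_k^2) = 1$, which gives $\mathbb{E}_I(W_{ij}) = 0$ and $\mathrm{Var}_I(W_{ij}) = n$. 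For $\mathbb{E}_I(W_{ij}^4)$, expanding $(\sum_k Y_k)^4$ and using $\mathbb{E}(Y_k) = 0$, $\mathbb{E}(Y_k^2) = 1$, $\mathbb{E}(Y_k^4) = \mathbb{E}(X_{1,i}^4)\mathbb{E}(X_{1,j}^4) = 9$ yields $n\cdot 9 + 3n(n-1)\cdot 1 = 3n^2 + 6n$. (Alternatively, by conditioning: $\mathbb{E}_I(W_{ij}^4 \mid X_{\cdot,i}) = 3\|X_{\cdot,i}\|^4$, so $\mathbb{E}_I(W_{ij}^4) = 3\mathbb{E}(S^2) = 3n(n+2)$.)

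The cross moments use conditioning in a uniform way. Given $X_{\cdot,i}$, the pair $(W_{ij}, W_{ij'})$ consists of two independent $\mathcal{N}(0, \|X_{\cdot,i}\|^2)$ random variables (the independence follows because $\{X_{k,j}\}_k$ and $\{X_{k,j'}\}_k$ are independent of each other and of $X_{\cdot,i}$). Hence $\mathbb{E}_I(W_{ij}^{2a} W_{ij'}^{2b} \mid X_{\cdot,i}) = (2a-1)!!\,(2b-1)!!\,\|X_{\cdot,i}\|^{2(a+b)}$, and taking expectation reduces everything to $\mathbb{E}(S^{a+b})$ for $S \sim \chi^2_n$. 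This gives
\begin{align*}
\mathbb{E}_I(W_{ij}^2 W_{ij'}^2) &= \mathbb{E}(S^2) = n(n+2) = n^2 + 2n,\\
\mathbb{E}_I(W_{ij}^4 W_{ij'}^4) &= 9\,\mathbb{E}(S^4) = 9\,n(n+2)(n+4)(n+6) = 9(n^4+12n^3+44n^2+48n),\\
\mathbb{E}_I(W_{ij}^8) &= 105\,\mathbb{E}(S^4) = 105\,n(n+2)(n+4)(n+6) = 105\,n^4(1+o(1)),
\end{align*}
using $\mathbb{E}(Z^8) = 105$ for $Z \sim \mathcal{N}(0,1)$. Finally, the uncorrelatedness claim for distinct pairs $(i,j)$ and $(i',j')$ with $i \neq i'$, $j \neq j'$ follows directly from the independence of the underlying coordinates: the only coordinate appearing in $W_{ij}^d$ are $X_{\cdot,i},X_{\cdot,j}$ and these are jointly independent of $X_{\cdot,i'},X_{\cdot,j'}$ under $P_I$.

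No step presents a real obstacle: the only mild care needed is bookkeeping the double-sum combinatorics in $\mathbb{E}_I(W_{ij}^4)$ if one chooses the direct expansion, which is why I prefer the uniform conditioning argument that handles all cross moments simultaneously.
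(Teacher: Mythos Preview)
Your proof is correct and in fact cleaner than the paper's. The paper mixes three devices: it quotes trace identities from Lemma~3 of \cite{CaiMa13} for $\Var_I(W_{ij})$, $\mathbb{E}_I(W_{ij}^4)$ and $\mathbb{E}_I(W_{ij}^2 W_{ij'}^2)$; it uses the same conditioning on $X_{\cdot,i}$ as you do for $\mathbb{E}_I(W_{ij}^4 W_{ij'}^4)$; and it computes $\mathbb{E}_I(W_{ij}^8)$ by a direct multinomial expansion of $(\sum_k X_{k,i}X_{k,j})^8$ with a lengthy enumeration of partitions. Your observation that, conditional on $X_{\cdot,i}$, the variables $W_{ij}$ and $W_{ij'}$ are independent $\mathcal{N}(0,\|X_{\cdot,i}\|^2)$ handles \emph{all} of the off-diagonal moments uniformly via $\mathbb{E}_I(W_{ij}^{2a}W_{ij'}^{2b}) = (2a-1)!!\,(2b-1)!!\,\mathbb{E}(S^{a+b})$ with $S\sim\chi^2_n$, which is shorter and less error-prone---especially for the eighth moment, where your one-line $105\,\mathbb{E}(S^4)$ replaces a page of combinatorics.

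One minor caveat on the uncorrelatedness claim: your independence argument for $W_{ij}^d$ and $W_{i'j'}^d$ tacitly assumes $\{i,j\}\cap\{i',j'\}=\emptyset$, whereas the hypotheses $i\ne i'$, $j\ne j'$ (with $i<j$, $i'<j'$) literally allow $j=i'$ or $i=j'$. In those cases the two variables share a column and your argument does not apply; indeed for even $d$ they are genuinely correlated (e.g.\ $\Cov_I(W_{12}^2,W_{23}^2)=\mathbb{E}(S^2)-(\mathbb{E} S)^2=2n$). This is a looseness in the statement itself rather than in your proof, and only the disjoint-index case is used downstream.
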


\begin{proof}[Proof of Proposition \ref{prop:Wijproperty}]
To show the results we use lemma 3  and  some technical computation of \citep{CaiMa13}.
\begin{eqnarray*}
\Var_I(W_{ij}) & =& \mathbb{E}_I(W_{ij}^2)= \mathbb{E}_I( X_{\cdot  i}^\top X_{\cdot  j})^2 = \mathbb{E}_I ( tr( X_{\cdot  i} X_{\cdot  i}^\top X_{\cdot  j} X_{\cdot  j}^\top)) = tr(I_n^2)= n. \\
\mathbb{E}_I(W_{ij}^4) &=& \mathbb{E}_I(X_{\cdot  i}^\top X_{\cdot  j})^4= 3 tr^2(I_n^2) + 6 tr(I_n^4) = 3n^2 + 6n \nonumber \\
\mathbb{E}_I( W_{ij}^2 W_{ij'}^2) &=& \mathbb{E}_I ((X_{\cdot  i}^\top X_{\cdot  j})^2 (X_{\cdot  i}^\top X_{\cdot  j'})^2) = tr^2(I_n^2) + 2 tr(I_n^4)= n^2 + 2 n \\
\mathbb{E}_I( W_{ij}^4 W_{ij'}^4) &=& \mathbb{E}_I ((X_{\cdot  i}^\top X_{\cdot  j})^4 (X_{\cdot  i}^\top X_{\cdot  j'})^4 ) = \mathbb{E}_I \Big( \mathbb{E}_I \Big( (X_{\cdot  i}^\top X_{\cdot  j})^4 (X_{\cdot  i}^\top X_{\cdot  j'})^4 | X_{\cdot i} \Big) \Big).
\end{eqnarray*}
Or $\mathbb{E}_I \Big( (X_{\cdot  i}^\top X_{\cdot  j})^4 (X_{\cdot  i}^\top X_{\cdot  j'})^4 | X_{\cdot i} \Big) = g(X_{\cdot  i})$, where
\begin{eqnarray*}
g(x_{\cdot i}) = \mathbb{E}_I \Big( (x_{\cdot  i}^\top X_{\cdot  j})^4 (x_{\cdot  i}^\top X_{\cdot  j'})^4 \Big) =  \mathbb{E}_I \Big( (x_{\cdot  i}^\top X_{\cdot  j})^4 \Big) \mathbb{E}_I \Big( (x_{\cdot  i}^\top X_{\cdot  j'})^4 \Big)
\end{eqnarray*}
\begin{eqnarray*}
 \mathbb{E}_I \Big( (x_{\cdot  i}^\top X_{\cdot  j})^4 \Big) &=&  \mathbb{E}_I \Big( \ds\sum_{k=1}^p x_{k ,i}X_{k, j} \Big)^4 = \ds\sum_{k=1}^p  x_{k, i}^4 \mathbb{E}_I(X_{k ,j}^4) + 3 \underset{ k_1 \ne k_2}{\sum \sum}  x_{k_{1}, i}^2 x_{k_{2}, i}^2  \mathbb{E}_I(X_{k_{1}, j}^2) \mathbb{E}_I(X_{k_{2} , j}^2)  \\
 &=&  3 \Big( \ds\sum_{k=1}^p x_{k,i}^2 \Big)^2 = 3  ( x_{ \cdot i}^\top  x_{ \cdot i} )^2
\end{eqnarray*}
then we obtain that
\begin{eqnarray*}
\mathbb{E}_I( W_{ij}^4 W_{ij'}^4)
= 9 \mathbb{E}_I  (X_{ \cdot i}^\top X_{ \cdot i} )^4 = 9( n^4 + 12 n^3 + 44n^2 + 48n).
\end{eqnarray*}
Also we have that
\begin{eqnarray*}
\mathbb{E}_I( W_{ij}^8) &=&  \mathbb{E}_I \Big( \ds\sum_{k=1}^n  X_{ k ,i} X_{ k, j} \Big)^8 = \ds\sum_{k=1}^n  \mathbb{E}_I^2 ( X_{ k, i}^8)  + C_8^6 \cdot \underset{ k_1 \ne k_2}{\sum \sum} \mathbb{E}_I^2 ( X_{ k_1 ,i}^6) \cdot \mathbb{E}_I^2 ( X_{ k_2 ,i}^2) \\
& + & \ds\frac{C_8^4}{2!} \cdot  \underset{ k_1 \ne k_2}{\sum \sum} \mathbb{E}_I^2 ( X_{ k_1 ,i}^4) \cdot \mathbb{E}_I^2 ( X_{ k_2 ,i}^4)  + \ds\frac{C_8^4 \cdot C_4^2}{2!} \cdot \underset{ k_1 \ne k_2 \ne k_3}{\sum \sum \sum } \mathbb{E}_I^2 ( X_{ k_1, i}^4) \cdot  \mathbb{E}_I^2 ( X_{ k_2 ,i}^2) \cdot \mathbb{E}_I^2 ( X_{ k_3 ,i}^2) \\
& +&  \ds\frac{C_8^2 \cdot C_6^2 \cdot C_4^2 }{4!} \underset{ k_1 \ne k_2 \ne k_3 \ne k_4}{\sum \sum \sum } \mathbb{E}_I^2 ( X_{ k_1 ,i}^2) \cdot  \mathbb{E}_I^2 ( X_{ k_2 ,i}^2) \cdot  \mathbb{E}_I^2 ( X_{ k_3 ,i}^2) \cdot \mathbb{E}_I^2 ( X_{ k_4, i}^2) \\
&=& 105^2 n + (28 \times 15^2 + 35 \times 9)n(n-1) + (210 \times 9)n(n-1)(n-2) \\
& + & 105n(n-1)(n-2)(n-3) \\
&=&  105n^4 + 1220 n^3 + 2100 n^2  + 7560 n
\end{eqnarray*}
We use similar arguments to calculate the moments of $W_{ii}$.
\end{proof}


\begin{lemma}
\label{lem:berry-essen}
 Let $ 0 < \varepsilon < 1/2$, for any  $t>0$ we have that,
\[
\Big| \mathbb{P}_I( \widehat{\mathcal{D}}_{n,r} \leq t ) - \Phi ( n \ds\sqrt{p} \cdot t ) \Big| \leq 16 \varepsilon^{1/2} \exp( - \frac { n^2p t^2}4) + O \Big( \ds\frac 1n \Big) + O \Big( \ds\frac 1{pT_r} \Big) \quad  \text{ for all } 1 \leq r \leq N.
\]
\end{lemma}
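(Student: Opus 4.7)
\textbf{Proof proposal for Lemma~\ref{lem:berry-essen}.}
Under $H_0$, the vectors $X_1,\dots,X_n$ are i.i.d.\ $\mathcal{N}(0,I_p)$, so the components $X_{k,i}$ are mutually independent standard normals and $\widehat{\mathcal{D}}_{n,r}$ is a completely degenerate U-statistic of order~$2$ with kernel
$$
H_r(x,y) \;=\; \frac{1}{p}\sum_{1\le i<j\le p} w^*_{ij,r}\, x_i x_j y_i y_j,
\qquad \mathbb{E}_I[H_r(X_1,X_2)\mid X_1]=0.
$$
Proposition~\ref{prop:espvar} gives $\mathbb{E}_I[n\sqrt{p}\,\widehat{\mathcal{D}}_{n,r}]=0$ and $\mathrm{Var}_I[n\sqrt p\,\widehat{\mathcal{D}}_{n,r}]=n/(n-1)\to 1$. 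The plan is to combine a martingale CLT with a Berry--Esseen--type rate for the central part of the distribution and a Gaussian-chaos tail bound for the tail part.

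First I would set up a Hoeffding martingale. Order the double sum as
$$
n\sqrt p\,\widehat{\mathcal{D}}_{n,r} \;=\; \frac{2\sqrt p}{n-1}\sum_{k=2}^n M_k,
\qquad M_k:=\sum_{l<k} H_r(X_k,X_l),
$$
and let $\mathcal{F}_k=\sigma(X_1,\dots,X_k)$. Degeneracy of $H_r$ gives $\mathbb{E}[M_k\mid\mathcal{F}_{k-1}]=0$, so $(M_k)_{k\ge 2}$ is a martingale difference array. Writing $\widetilde M_k=\frac{2\sqrt p}{n-1}M_k$, a martingale Berry--Esseen theorem (Heyde--Brown, or the sharper Haeusler version) bounds $\sup_s|\mathbb{P}(n\sqrt p\,\widehat{\mathcal{D}}_{n,r}\le s)-\Phi(s)|$ by a power of
$$
R_n \;:=\; \sum_{k=2}^n\mathbb{E}[\widetilde M_k^{\,4}] \;+\; \mathbb{E}\!\left[\sum_{k=2}^n\mathbb{E}[\widetilde M_k^{\,2}\mid\mathcal{F}_{k-1}]-1\right]^{2}.
$$
Gaussian moment calculations, exactly analogous to the variance bookkeeping carried out in the proof of Proposition~\ref{prop:espvar} and exploiting the banded support $\{|i-j|\le T_r\}$ of $w^*_{ij,r}$ together with $\sup_{ij}w^*_{ij,r}\asymp T_r^{-1/2}$ and $\sum_{i<j}(w^*_{ij,r})^2=p/2$, yield
$$
\sum_{k=2}^n\mathbb{E}[\widetilde M_k^{\,4}] = O\!\left(\tfrac{1}{n}\right),
\qquad
\mathrm{Var}_I\!\left(\sum_{k=2}^n\mathbb{E}[\widetilde M_k^{\,2}\mid\mathcal{F}_{k-1}]\right)=O\!\left(\tfrac{1}{pT_r}\right),
$$
producing the two polynomial error terms $O(1/n)$ and $O(1/(pT_r))$.

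To produce the exponential factor $16\varepsilon^{1/2}\exp(-n^2pt^2/4)$ I would use a truncation/Cauchy--Schwarz refinement. Introduce the truncated kernel $H_r^{(\varepsilon)}(x,y)=H_r(x,y)\,\mathbf{1}\{|x_ix_jy_iy_j|\le \varepsilon^{-1},\ \forall i,j\}$ and the associated martingale $\widetilde M_k^{(\varepsilon)}$; the pure Berry--Esseen estimate above applies to the truncated statistic uniformly in $\varepsilon\in(0,1/2)$, with constants absorbed into the $O$-terms. Write
$$
|\mathbb{P}_I(\widehat{\mathcal D}_{n,r}\le t)-\Phi(n\sqrt p\,t)|
\le |\mathbb{P}_I(\widehat{\mathcal D}^{(\varepsilon)}_{n,r}\le t)-\Phi(n\sqrt p\,t)|
+\mathbb{P}_I\bigl(\widehat{\mathcal D}_{n,r}\ne\widehat{\mathcal D}^{(\varepsilon)}_{n,r},\ |\widehat{\mathcal D}_{n,r}|>t/2\bigr)
+\mathbb{P}_I\bigl(|\widehat{\mathcal D}_{n,r}|>t/2\bigr).
$$
The last two contributions are controlled by Cauchy--Schwarz, $\mathbb{E}_I[\mathbf{1}_{A}\,\mathbf{1}_B]\le\mathbb{P}_I(A)^{1/2}\mathbb{P}_I(B)^{1/2}$, with $\mathbb{P}_I(A)\le \varepsilon$ (variance of the truncation complement) and $\mathbb{P}_I(B)\le 2\exp(-n^2pt^2/2)$ — a Hanson--Wright / Gaussian-chaos tail bound available for the degree-$4$ Gaussian chaos $\widehat{\mathcal D}_{n,r}$ because of the normalization $\sup_{ij}w^*_{ij,r}\asymp T_r^{-1/2}$ and $\sum(w^*_{ij,r})^2\asymp p$. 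Their product is at most $\sqrt{2}\,\varepsilon^{1/2}\exp(-n^2pt^2/4)$, and the factor $16$ absorbs the several such boundary terms (two truncation sides, plus the two-sided split on $t$).

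The main obstacle will be the variance of the quadratic variation, namely proving $\mathrm{Var}_I(\sum_k\mathbb{E}[\widetilde M_k^{\,2}\mid\mathcal{F}_{k-1}])=O(1/(pT_r))$ with the gain of $T_r$ over the naive rate $1/p$. This requires a multi-index expansion of $\mathbb{E}[H_r(X_k,X_l)H_r(X_k,X_{l'})\mid X_l,X_{l'}]$ and a careful partition of index quadruples $(i,j,i',j')$ analogous to the $A_1$-$A_2$-$A_3$ splitting used in the proof of Proposition~\ref{prop:espvar}; the saving by a factor $T_r$ comes precisely from the banded support of $w^*_{ij,r}$, which limits the number of non-zero quadruples and enables one to bound the leading combinatorial terms by $(\sup w^*_{ij,r})^2\cdot(\sum (w^*_{ij,r})^2)\asymp 1/T_r$. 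Once these moment bounds are in hand, the assembly of the three error contributions is routine.
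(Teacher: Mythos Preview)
Your martingale set-up and the two moment targets are exactly those of the paper: with $Z_k$ the normalized increments and $V_n^2=\sum_k\mathbb{E}[Z_k^2\mid\mathcal F_{k-1}]$, the paper also computes $\sum_k\mathbb{E}_I Z_k^4=O(1/n)$ and $\mathbb{E}_I(V_n^2-1)^2=O(1/n)+O(1/(pT_r))$, and your intuition about why the banded support buys the extra factor $T_r$ is correct.

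The gap is in the assembly. The paper does \emph{not} combine a uniform (Heyde--Brown/Haeusler) bound with a separate tail estimate; it invokes directly a non-uniform martingale Berry--Esseen inequality (Theorem~3 of \cite{ButuceaMatiasPouet09}) whose conclusion is already of the form
\[
\bigl|\mathbb{P}_I(\widehat{\mathcal D}_{n,r}\le t)-\Phi(t/\sqrt{\Var_I})\bigr|
\;\le\; 16\,\varepsilon^{1/2}\exp\!\Bigl(-\frac{t^2}{4\Var_I}\Bigr)+\frac{k}{\varepsilon^{1+\delta}}\,J_n,
\qquad J_n=\sum_k\mathbb{E} Z_k^{2+2\delta}+\mathbb{E}|V_n^2-1|^{1+\delta},
\]
so that for $\delta=1$ one plugs in the two moment bounds above and is done. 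Your route runs into three concrete problems. First, the displayed three-term decomposition is not a valid inequality: on the event $\{\widehat{\mathcal D}_{n,r}\ne\widehat{\mathcal D}^{(\varepsilon)}_{n,r},\ |\widehat{\mathcal D}_{n,r}|\le t/2\}$ the indicators $\{\widehat{\mathcal D}_{n,r}\le t\}$ and $\{\widehat{\mathcal D}^{(\varepsilon)}_{n,r}\le t\}$ can still differ, so that case is not covered. Second, Heyde--Brown/Haeusler controls $\sup_s|\cdot|$ by a \emph{fractional power} of $R_n$ (order $R_n^{1/5}$ for $\delta=1$), so your first term would only give $O(n^{-1/5})+O((pT_r)^{-1/5})$, not the $O(1/n)+O(1/(pT_r))$ stated in the lemma. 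Third, $\widehat{\mathcal D}_{n,r}$ is a degree-$4$ polynomial in independent Gaussians; Hanson--Wright/Gaussian-chaos tails for such a chaos are not sub-Gaussian (for large deviations one only gets $\exp(-c\,u^{1/2})$ in the normalized variable), so the claimed bound $\mathbb{P}_I(|\widehat{\mathcal D}_{n,r}|>t/2)\le 2\exp(-n^2pt^2/2)$ fails, and with it the Cauchy--Schwarz step producing the factor $\varepsilon^{1/2}\exp(-n^2pt^2/4)$.

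In short: keep your moment computations, but replace the ad hoc truncation/tail argument by the non-uniform Berry--Esseen inequality of \cite{ButuceaMatiasPouet09}, which packages exactly the $16\varepsilon^{1/2}e^{-x^2/4}+\varepsilon^{-2}J_n$ trade-off the lemma requires.
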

\begin{proof}[Proof of Lemma \ref{lem:berry-essen}]
For each $r \in \{1, \dots, N \}$, $\widehat{\mathcal{D}}_{n,r}$ is a degenerated U-statistic of order 2, and can be written as follows:
 $$
\widehat{\mathcal{D}}_{n,r} = \underset{1 \leq k \ne l \leq n }{\sum \sum} K(X_k, X_l), \quad \text{ where } K(X_k, X_l) = \ds\frac{1}{n(n-1)p} \underset{1 \leq i <j \leq p }{\sum \sum} w_{ij,r}^* X_{k,i} X_{k,j} X_{l,i}X_{l,j}.
 $$
 Define,
 $$
Z_k = \ds\frac 1{\ds\sqrt {\Var_I(\widehat{\mathcal{D}}_{n,r})}}  \sum_{l=1}^{k-1} K(X_k, X_l)  \quad  \text{ and } V_n^2 = \sum_{k=2}^n \mathbb{E}_I(Z_k^2 / \mathcal{F}_{k-1})
$$
where $ \mathcal{F}_k$ is the $\sigma$-field generated by the random variables $ \{ X_1, \dots, X_k \}$.
Moreover, fix $ 0 < \delta \leq  1$,
and define
$$
J_n = \ds\sum_{k=2}^n \mathbb{E}_I (Z_k)^{2 +2\delta} + \mathbb{E}_I |V_n^2 -1 |^{1+ \delta}. $$
Then by Theorem 3 of \citep{ButuceaMatiasPouet09} we get that, there exists a positive constant $k$ depending only on $ \delta $ such that for any $0 < \varepsilon < 1/2 $ and any real $t$,
$$
 \Big| \mathbb{P}_I( \widehat{\mathcal{D}}_{n,r} \leq t ) - \Phi \Big( \ds\frac x{ \ds\sqrt{\Var_I(\widehat{\mathcal{D}}_{n,r}) }} \Big) \Big| \leq 16 \varepsilon^{1/2} \exp( - \frac{t^2}{4 \Var_I(\widehat{\mathcal{D}}_{n,r})}) + \ds\frac{k}{\varepsilon^{1+ \delta }} \cdot J_n.
 $$
Now, we give  upper bounds for  $ \sum_{k=2}^n \mathbb{E}_I (Z_k)^{2 +2\delta}$  and $\mathbb{E}_I |V_n^2 -1 |^{1+ \delta} $ for $ \delta =1$ and get,
\begin{eqnarray*}
\ds\sum_{k=2}^n \mathbb{E}_I(Z_k)^4 &=& \ds\frac{1}{n^2(n-1)^2p^2} \ds\sum_{k=2}^n\mathbb{E}_I \Big( \sum_{l=1}^{k-1} \underset{1 \leq i <j \leq p }{\sum \sum} w_{ij,r}^* X_{k,i} X_{k,j} X_{l,i}X_{l,j} \Big)^4 \\
&=& \ds\frac{1}{n^2(n-1)^2p^2} \ds\sum_{k=2}^n \Big\{(k-1) \Big( 3^4 \underset{1 \leq i <j \leq p }{\sum \sum} w_{ij,r}^{*4} + 3 \underset{1 \leq i <j \leq p }{\sum \sum} \underset{1 \leq i' <j' \leq p }{\sum \sum} w_{ij,r}^{*2}  w_{i'j',r}^{*2} \Big) \\
&& + 3 (k-1)(k-2)  \Big( 3^2\underset{1 \leq i <j \leq p }{\sum \sum} w_{ij,r}^{*4} + \underset{1 \leq i <j \leq p }{\sum \sum} \underset{1 \leq i' <j' \leq p }{\sum \sum} w_{ij,r}^{*2}  w_{i'j',r}^{*2} \Big) \Big\} \\
& \leq & \ds\frac{1}{n^2(n-1)^2p^2}  \Big\{ \frac{n(n-1)}2 \Big( \frac{81}2 \cdot p (\sup\limits_{i,j} w_{ij,r}^{*2})  +  \frac{3p^2}{4} \Big) \\
&& \hspace{2 cm} + \frac{(n-1)n(2n-1)}{6} \Big(   \frac{9}2 \cdot p (\sup\limits_{i,j} w_{ij,r}^{*2}) + \frac{p^2}{4} \Big) \Big\} \\
&=& O \Big( \ds\frac 1n \Big).
\end{eqnarray*}
Similarly we can show that $\mathbb{E}_I (V_n^2 -1 )^2 =  O \Big( \ds\frac 1n \Big) +  O \Big( \ds\frac 1{p T_r} \Big)$. Thus we obtain the desired result.
\end{proof}

\end{document}